\documentclass[11pt]{article}
\usepackage{appendix}
\usepackage{mathtools}
\usepackage[T1]{fontenc}
\usepackage{amsfonts}
\usepackage{amsmath}
\usepackage{amssymb}
\usepackage{amsthm}
\usepackage{thmtools}
\usepackage{bbm}
\usepackage{bm}
\usepackage{mathrsfs}
\usepackage{color}
\usepackage{pdfsync}
\usepackage{enumitem}
\usepackage{subcaption}
\usepackage{xcolor} %
\definecolor{darkgreen}{rgb}{0,0.5,0} %
\definecolor{darkbkue}{rgb}{1,0,0} 
\usepackage{natbib}
\setcitestyle{authoryear,aysep={,}}  %
\usepackage[colorlinks=true, linkcolor=blue, citecolor=blue, urlcolor=blue]{hyperref}
\makeatletter
\renewcommand\@makefnmark{%
  \hbox{\textsuperscript{\textcolor{darkgreen}{\@thefnmark}}}}
\makeatother
\usepackage{tikz}
\usetikzlibrary{calc, positioning,arrows.meta}
\usepackage[most]{tcolorbox}
\usepackage{booktabs}  
\usepackage[scaled]{helvet}
\DeclareMathOperator*{\argmin}{argmin}
\DeclareMathOperator*{\argmax}{argmax}

\newcommand{\PP}{\mathbb{P}}

\newcommand{\cP}{\mathcal{P}}

\usepackage[margin=31 mm]{geometry}
\newcommand{\mykill}[1]{}
\usepackage[capitalize, nameinlink]{cleveref}

\newlist{hypotheses}{enumerate}{1}
\setlist[hypotheses]{
    label=\textcolor{darkgreen}{(H\arabic*)},
    ref=(H\arabic*)
}

\usepackage{amsfonts,bbm,bm}

\newcommand{\BF}{\mathbf{F}}

\newcommand{\BI}{\mathbf{I}}

\newcommand{\BQ}{\mathbf{Q}}

\newcommand{\BS}{\mathbf{S}}
\newcommand{\BT}{\mathbf{T}}

\newcommand{\CC}{\mathcal{C}}

\newcommand{\cB}{\mathcal{B}}
\newcommand{\cC}{\mathcal{C}}

\newcommand{\cE}{\mathcal{E}}

\newcommand{\cG}{\mathcal{G}}
\newcommand{\cH}{\mathcal{H}}

\newcommand{\cL}{\mathcal{L}}

\newcommand{\cN}{\mathcal{N}}

\newcommand{\cS}{\mathcal{S}}

\newcommand{\cU}{\mathcal{U}}

\newcommand{\cW}{\mathcal{W}}

\newcommand{\R}{\mathbb{R}} 
\newcommand{\N}{\mathbb{N}} 
\newcommand{\E}{\mathbb{E}} 
\newcommand{\BB}{\mathbb{B}}
\newcommand{\BV}{\mathbb{V}}
\newcommand{\BL}{\mathbb{L}}
\newcommand{\tr}{\mathrm{tr}} 

\newcommand{\diver}[1]{\mathrm{div}(#1)}

\newcommand{\vae}{\varepsilon}

\newcommand{\rom}[1]{%
  \textup{\uppercase\expandafter{\romannumeral#1}}%
}

\newcommand{\inner}[2]{\langle #1,#2\rangle}

\theoremstyle{plain}
\newtheorem{theorem}{Theorem}[section]
\newtheorem{proposition}[theorem]{Proposition}
\newtheorem{lemma}[theorem]{Lemma}
\newtheorem{corollary}[theorem]{Corollary}
\theoremstyle{definition}
\newtheorem{definition}[theorem]{Definition}
\newtheorem{remark}[theorem]{Remark}

\newtheorem{conjecture}[theorem]{Conjecture}
\theoremstyle{remark}

\newcommand*{\indic}[1]{\mathbf{1}_{#1}}
\newcommand*{\rd}{\mathrm{d}}
\newcommand*{\dd}{\, \rd}

\newlist{myenum}{enumerate}{3}
\setlist[myenum,1]{label={\rm (H\arabic*)},
                   ref  ={\rm (H\arabic*)}}
\crefname{myenumi}{property}{properties}

\renewenvironment{thebibliography}[1]{%
\begin{oldthebibliography}{#1}%
\setlength{\baselineskip}{.9em}
\linespread{1}
\small
\setlength{\parskip}{.40ex}%
\setlength{\itemsep}{.30em}%
}%
{%
\end{oldthebibliography}%
}

\definecolor{grape}{rgb}{0.43, 0.17, 0.71}

\begin{document}

\title{
  \resizebox{0.95\textwidth}{!}{The Influence Function of Transport-based Quantiles}
}
\date{\today}
\author{  
  Alberto Gonz{\'a}lez-Sanz \and  Shunan Sheng \and  Bohan Wu \and Marco Avella Medina\thanks{Department of Statistics, Columbia University}   
  }
  
\maketitle 
\vspace{-1.5em}

\begin{abstract}
Transport-based quantiles provide a canonical way to extend univariate quantiles to
multivariate distributions through optimal transport. In this paper, we study the
influence function of the transport-based quantile map \(\BQ_P\), defined as the transport map
pushing a fixed reference measure \(\mu\) forward to a target distribution \(P\). For
Huber-type perturbations \(P_t=(1-t)P+t\delta_{x_0}\), we establish the existence and
uniqueness of the first-order limit
\[
\BI(x_0;\BQ_P(z))
:=
\lim_{t\downarrow0}
\frac{\BQ_{(1-t)P + t \delta_{x_0}}(z) - \BQ_P(z)}{t}
\]
away from the point \(x_0=\BQ_P(z)\). We show that this influence function admits the
representation $ \BI(x_0;\BQ_P(z))=\nabla G_{x_0}(z),$
where the potential \(G_{x_0}\) is characterized by a uniformly elliptic equation with
a Dirac source and Neumann boundary condition.  Next, we show that,
in dimension \(d\geq 2\), the influence function has a pole-type singularity. More
precisely, for fixed \(z\in \operatorname{int}(\Omega_\mu)\), the influence function
remains bounded when the perturbation point \(x_0\) is such that \(\BF_P(x_0)\) stays
away from \(z\),  where \(\BF_P=\BQ_P^{-1}\) is the transport-based distribution
function. However,  the influence function diverges as \(x_0\to \BQ_P(z)\), equivalently as
\(\BF_P(x_0)\to z\). In fact,
$\|\BI(x_0;\BQ_P(z))\|
\asymp
\frac{1}{\|z - \BF_P(x_0)\|^{d-1}}$. This singular behavior contrasts with the bounded influence function of
univariate quantiles and implies that the influence-function random variable
\(\BI(X;\BQ_P(z))\), \(X\sim P\), has infinite second moment. We also provide numerical
evidence suggesting that empirical transport quantiles may exhibit stable-type, non-Gaussian fluctuations.
\end{abstract}

 \vspace{1em}

{\small
\noindent \emph{Keywords} Transport-based quantiles; Influence function; Robustness; Optimal transport.

\noindent \emph{MSC2020 subject classifications. Primary: } 62G35, 62G30.  
}
 \vspace{.2em}
 \section{Introduction}
Given probability measures $\mu$ and $P$ that are absolutely continuous with respect to the Lebesgue measure $\cL_d$ and have finite second moments, the optimal transport (OT) map from $\mu$ to $P$ under the squared cost is defined by
\begin{align}\label{eq:OT}%
T_{\mu\to P} 
= \argmin_{T:\, T\#\mu=P}
\int_{\mathbb{R}^d} \|z - T(z)\|^2 \, \mu(\rd z).
\vspace{-2mm}
\end{align}
Here $T\#\mu$ denotes the \emph{push-forward} measure, that is, the measure defined by $T\#\mu(A) := \mu\bigl(T^{-1}(A)\bigr)$ for each measurable set $A \subset \mathbb{R}^d$. The solution $T_{\mu\to P}$ to the OT problem~\eqref{eq:OT} is well defined and unique $\mu$-a.e. Moreover, it can be represented as the gradient of a convex function~\citep{Cuesta1989NotesOT, Ruschendorf.Rachev.1990, brenier1991polar}.

Among the recent applications of optimal transport (see \cite{PeyreCuturi.19,CheNilRig25OT}
and references therein), the definition of multivariate quantile functions via optimal
transport is a prominent contribution to classical statistics
\citep{ChernozhukovEtAl.17.AOS,HallinDelBarrioCuestaAlbertosMatran.21}.
The \emph{transport quantile} function $\BQ_P := T_{\mu\to P}$ of a probability measure
$P$ is defined as the optimal transport map from a fixed reference measure $\mu$ to $P$.\footnote{More precisely, the transport-based quantile function
$\mathbf Q_P$ is defined as the unique $\mu$-almost everywhere gradient of a
convex function satisfying $(\mathbf Q_P)_{\#}\mu=P$. This definition does not
require $P$ to have finite moments. The finite-second-moment assumption is only
needed when interpreting $\mathbf Q_P$ as the solution of \eqref{eq:OT}.}
When $P$ is absolutely continuous with respect to $\cL_d$ as well, the inverse map
$\mathbf{F}_P$ of $\BQ_P$ exists and is called the \emph{transport distribution} function
of $P$. Compared to classical notions of multivariate quantiles, such as those based on
Tukey depth or spatial depth, transport quantiles induce distribution-free
center-outward ranks and signs  and provide
a full characterization of the target distribution \citep{HallinDelBarrioCuestaAlbertosMatran.21}. The latter property fails
for Tukey depth \citep{Nagy2021}, but has recently been established for spatial
depth \citep{GonzalezSanzKonen2026}.

The growing range of applications of the transport quantile function calls for a systematic investigation of its statistical properties; see, e.g., \cite{ ghosal2022multivariate, HallinVecchiaLiu2022, deb2024distribution,huang2025multivariate, BarrioSanzHallin2025, ShiDrtonHallinHan2025,GonzalezSanzHallinYao2025} 
and references therein. Robustness  is a fundamental aspect in many applications~\citep{ronchetti2023}, which boils down to studying the \emph{stability} of transport quantiles under perturbations of the target measure $P$ that may arise from data contamination~\citep{HuberRonchetti2009,hampeletal1986,maronnaetal2019}. %
The two most classical quantifiers of robustness are 
the \emph{breakdown point} and the \emph{influence function}. The breakdown point characterizes the maximal proportion of contamination that an estimator can tolerate before becoming arbitrarily unstable, whereas the influence function quantifies the first-order effect of infinitesimal perturbations of the target measure. %

Recent work by  \citet{paindaveine:passeggeri2026,Avella-Gonzalez.26,gonzalezsanz:avellamedina2026} showed that transport quantiles enjoy a high breakdown point under suitable conditions. However, despite its fundamental importance, the influence function of transport quantiles has not yet been systematically investigated. 

\subsection{Main contributions}

In this paper, we initiate the study of the influence function of transport quantiles when $d \ge 2$.\footnote{When $d=1$, the transport quantile is given by $F_P^{-1}\circ F_\mu$, where $F_\mu$ and $F_P$ are the distribution functions of $\mu$ and $P$, respectively. Standard univariate quantiles are recovered by taking $\mu={\rm Unif}(0,1)$ and our derivation recovers the influence function for univariate quantiles.}  Let $\theta: \cP(\R^d) \mapsto \R^m$ be a statistical functional of interest.  To follow the classical definition of \cite{HuberRonchetti2009}, the influence function (IF) of $\theta$ at a point $x_0 \in \R^d$ is defined by
\begin{equation*}
    \BI(x_0;\theta,P)
=
\lim_{t\downarrow0}
\frac{1}{t}\Big(\theta((1-t)P + t \delta_{x_0}) - \theta(P)\Big),
\end{equation*}
whenever the limit exists. In our setting, the functional of interest is $\theta(P) = \BQ_P(z)$ for a fixed transport quantile level $z$ in the support of $\mu$. We show in \Cref{Thm:InfluenceQuantiles-main}(i) that the influence function
\[
\BI(x_0;\BQ_P(z))
:=
\lim_{t\downarrow0}
\frac{\BQ_{(1-t)P + t \delta_{x_0}}(z) - \BQ_P(z)}{t}
\]
exists for all $x_0 \neq \BQ_P(z)$. Moreover, \Cref{Thm:InfluenceQuantiles-main}(ii) shows that $\BI(x_0;\BQ_P(z))$ admits the representation
\[
\BI(x_0;\BQ_P(z))
=
\nabla G_{x_0}(z),
\]
where the potential $G_{x_0}$ is characterized as the weak solution to a uniformly elliptic partial differential equation (PDE) with a Dirac mass on the right-hand side and Neumann boundary conditions.

Furthermore, the influence function shows a \emph{pole-type singularity} at $\BF_P(x_0)$ (see \Cref{thm:Unbounded-IF}). More precisely, for $x_0$ sufficiently close to $\BQ_P(z)$, we have
\begin{equation*}
\|\BI(x_0;\BQ_P(z))\|
\asymp
\frac{1}{\|z - \BF_P(x_0)\|^{d-1}},
\end{equation*}
where $\BF_P = (\BQ_P)^{-1}$ is the transport distribution function of $P$. The observed singular behavior contrasts with the boundedness of the influence function in the one-dimensional case and arises from the singular nature of the Dirac mass in the associated PDE. 

As a byproduct of our proof strategy, we also characterize the shape of the set (see~\Cref{prop:shape})
\[
K_{x_0,t}
=
\big(\BQ_{(1-t)P + t\delta_{x_0}}\big)^{-1}(x_0),
\]
that is, the set of points mapped to the perturbation location $x_0$ at time $t>0$. We show that $K_{x_0,t}$ behaves asymptotically as $t\to 0$ like a ball of radius $t^{1/d}$ for $d\ge 3$.

Our derivation of the influence function suggests that the empirical optimal transport map may exhibit a heavy-tailed limiting distribution; see~\cref{sec:examples} for a more extended discussion of this point. %

\subsection{Related work and technical challenges}
Our analysis relies on the PDE characterization of the optimal transport map. A similar idea was adopted in \citep{Loeper.09, gonzalez2024linearization} to study the stability of the optimal transport map under \textit{smooth} perturbations of the target measure. This builds on a line of work about the quantitative and qualitative stability of optimal transport maps, including \cite{loeper2005regularity, loeper2006fully, DePhilippis2013, delBarrioLoubes.19, segers2022graphical, manole2023central, delBarrioGonzalezLoubes.24, balakrishnan2025stability}.

However, when the perturbation is of \emph{Huber type} with a mass point, that is, when the target measure $P$ is contaminated by the point mass at a singleton, little is known about the corresponding stability problem. In fact, the nonsmooth nature of the perturbation poses a serious technical challenge. Denote by $P_t = (1-t)P + t\delta_{x_0}$ the Huber-contaminated measure at time $t>0$. In this setting, the transport quantile $\BQ_{P_t}$ fails to be differentiable, even when the unperturbed transport quantile $\BQ_P$ is differentiable (see  \Cref{prop:Example-uniform}~(i)). 
 As a result, classical approaches relying on the implicit function theorem for deriving the influence function do not apply. Instead, we analyze the problem by first characterizing the limit in the $L^q$ sense for some $q \in (1,2)$, and then upgrading the convergence to the local space $\cC^{2,\alpha}_{\rm Loc}$ containing functions that belong to $\cC^{2,\alpha}(K)$ for every compact $K \subset \Omega$. 
 The pole-type singularity is subsequently derived via a localization argument, in which the analysis is carried out on regions away from the singularity point $\BF_P(x_0)$.

The proof of \Cref{Thm:InfluenceQuantiles-main} builds upon an $L^2$ estimate (see \Cref{thm:L2-estimate}), which quantifies the $L^2$ rate of convergence of $\BQ_{P_t} - \BQ_P$. Nonetheless, our quantity of interest is 
\[
\nabla \xi_t := \frac{1}{t}(\BQ_{P_t} - \BQ_P),
\]
which diverges under the current $L^2$ control. We therefore refine the estimate and prove that $\{\xi_t\}_{t \le t_0}$ is uniformly bounded in $L^q$ for some $q \in (1,2)$ and sufficiently small $t_0>0$ (see~\Cref{Prop:Lq-estimate}). From this point, we exploit the local maximum principle, obtained via \textit{Moser iteration}, followed by \textit{Schauder estimates}, to derive $\cC_{\rm Loc}^{2,\alpha}$ bounds and complete the proof; we refer the reader to~\cite{GilbargTrudinger.Book} for the background in PDE.

 We summarize the logical structure of the argument and present a roadmap toward the main results in \Cref{fig:dependency-diagram}. As illustrated there, the $L^2$ estimate serves as the cornerstone of our analysis. For intuition, we provide in \Cref{prop:Example-uniform}~(ii) a direct derivation of the $L^2$ estimate in the special case $\mu = P = \cU(\BB(0,1))$.

The robustness of multivariate quantile functions has been a longstanding topic in the statistical literature. In the case of Tukey median, \citet{Donoho.Gasko.AoS.1992} proved that the breakdown point of the Tukey median is bounded between $1/(d+1)$ and $1/3$ for general distributions. Complementing this, \citet{ChenTyler2002IF} provided an explicit characterization of the influence function of the Tukey median in the halfspace-symmetric setting. Robustness has also been studied for multivariate quantiles beyond depth-based notions: \citet{KonenPaindaveine2025} analyzed general multivariate $M$-quantiles, and \citet{Konen.Paindaveine.AIHP.2026} did so for spatial/geometric quantiles.
Furthermore, \citet{Konen.BJ.2025} showed that any probability measure is fully characterized by its geometric distribution function through the fractional Laplacian operator $F \mapsto (-\Delta)^{(d-1)/2}\nabla\cdot F$.

\begin{figure}[t]
\centering
\begin{tikzpicture}[
  thm/.style={draw, rectangle, inner sep=3pt, align=center},
  arr/.style={-Latex, line width=0.9pt},
  node distance=14mm and 34mm
]
\node[thm] (t42) {{\Large \Cref{thm:L2-estimate}}\\{\normalsize ($L^2$ estimate)}};
\node[thm, right=23mm of t42] (l54) {{\Large \Cref{lemma:test-linearization}}\\{\normalsize (linearization)}};
\node[thm, right=23mm of l54] (p53) {{\Large \Cref{Prop:Lq-estimate}}\\{\normalsize ($L^q$ estimate)}};
\node[thm, below=of l54] (p55) {{\Large \Cref{prop:Uniform-estimate}}\\{\normalsize (local $\cC^{2,\alpha}$ estimate)}};
\node[thm, below=of t42] (p34) {{\Large \Cref{prop:shape}}\\{\normalsize (shape control)}};

\node[thm, below=of p53] (t32) {{\Large \Cref{Thm:InfluenceQuantiles-main}}
\\{\normalsize (characterization of IF)}};
\node[thm, below=of p55] (t33) {{\Large \Cref{thm:Unbounded-IF}}\\{\normalsize (pole-type singularity of IF)}};

\draw[arr] (t42) -- (l54);
\draw[arr] (t42) -- (p34);
\draw[arr] (l54) -- (p53);
\draw[arr] (p53) -- (p55);

\draw[arr] (p53) -- (t32);
\draw[arr] (t32) -- (t33);

\draw[arr] (p55) -- (t32);
\draw[arr] (p34) -- (p55);

\end{tikzpicture}
\caption{Flow Chart for the Proof of Main Results.}
\label{fig:dependency-diagram}
\end{figure}

\paragraph{Organization} The remainder of the paper is organized as follows. \Cref{sec:background} introduces the notation and reviews the background on optimal transport. \Cref{sect:Main-rsult} states our main results on the influence function and its pole-type singularity. \Cref{sec:examples} presents several examples and numerical experiments. \Cref{sec:proofs} contains the proofs of the main results. Finally, \Cref{sec:solvability} collects the PDE results used in the proofs.

\section{Notation and Preliminaries}\label{sec:background}
\subsection{Notation}\label{Sect:Notation}
Let $\cP(\R^d)$ denote the space of probability measures on $\R^d$ and $\cP_2(\R^d)$ the subspace of measures with finite second moment. For any set $A \subset \mathbb{R}^d$, we write $\overline{A}$ for its topological closure and $\mathrm{int}(A)$ for its interior.  We write $\BB_R(x_0)$ for the open ball of radius $R > 0$ centered at $x_0 \in \R^d$.  For a probability measure $\nu$, we denote its support by $\Omega_\nu$ and Lebesgue density (if it exists) by $f_\nu$.  The $k$-dimensional Hausdorff measure is denoted by $\cH^{k}$. For symmetric matrices $A, B \in \mathbb{R}^{d \times d}$, the notation $A \preceq B$ indicates that $B - A$ is positive semidefinite. The $d \times d$ identity matrix is denoted by $\mathrm{I}_d$. We use $\nabla f$ for the gradient of a function $f$ and $\nabla^2 f$ for its Hessian matrix of second derivatives. The space $\cC^{k,\alpha}(\Omega)$ consists of functions whose derivatives up to order $k$ are $\alpha$-Hölder continuous on $\Omega$; we simply write $\cC^{k,\alpha}$ when the domain is clear from context. For $f\in \cC^{k,\alpha}(\Omega)$, its norm is given by $
\|f\|_{\cC^{k,\alpha}(\Omega)}
:=
\sum_{|\beta|\leq k}\|D^\beta f\|_{L^\infty(\Omega)}
+
\sum_{|\beta|=k}[D^\beta f]_{\cC^{0,\alpha}(\Omega)}$, 
where
$[g]_{\cC^{0,\alpha}(\Omega)}
:=
\sup_{\substack{x,y\in \Omega,\, x\neq y}}
|g(x)-g(y)|/\|x-y\|^\alpha$ and $D^{\beta}=\frac{\partial^{|\beta|}}{\partial x_1^{\beta_1}\dots \partial x_d^{\beta_d}}$, $|\beta|=\sum_{j=1}^d\beta_j$. We write $\cC^{k}(\Omega)=\cC^{k,0}(\Omega)$.  
The local space $\cC^{k,\alpha}_{\mathrm{Loc}}(\Omega)$ contains functions that belong to $\cC^{k,\alpha}(K)$ for every compact $K \subset \Omega$. For $L^p$-spaces, $L^p(\nu)$ denotes the space with respect to a probability measure $\nu$, while $L^p(\Omega)$ refers to the space with respect to Lebesgue measure on a domain $\Omega \subset \mathbb{R}^d$; the domain may be omitted when the context is clear. Finally, $W^{k,p}(\Omega)$ denotes the Sobolev space of functions with weak derivatives up to order $k$ in $L^p(\Omega)$, equipped with its standard norm. Let $\varphi(t)=e^{t^2}-1$. The Orlicz space associated with $\varphi$,
denoted by $L_\varphi(\Omega)$, is the set of measurable functions
$f:\Omega\to\R$ such that
\[
\int_\Omega \varphi\!\left(\frac{|f(x)|}{r}\right)\dd x<\infty
\]
for some $r>0$. The corresponding Luxemburg norm is given by
\[
\|f\|_{L_\varphi(\Omega)}
:=
\inf\left\{
r>0\,\middle|\,
\int_\Omega
\varphi\!\left(\frac{|f(x)|}{r}\right)\dd x
\leq 1
\right\}.
\]
A set $\Omega\subset \R^d$ is said to be a $\CC^{k,\alpha}$ domain $(k \in \N, \alpha \in [0,1])$ if for every $x_0\in \partial \Omega$, the boundary of $\Omega$, there exists a neighborhood $B=B(x_0)$ of $x_0$ in $\mathbb{R}^d$ and a diffeomorphism $\Psi : B \rightarrow D:=\Psi(B)\subset \R^d$ such that
 \begin{equation*}
     \label{eq:domain}
     \text{(i) $\Psi(B\cap \Omega)\subset \R_+^d$; \quad (ii) $\Psi(B\cap \partial \Omega) \subset\partial \R_+^d$;
\quad (iii) $\Psi\in \CC^{k,\alpha}(B), \Psi^{-1}\in \CC^{k,\alpha}(D)$}, 
 \end{equation*}
where
\[
\mathbb{R}^d_+ := \left\{x=(x_1,\ldots,x_d)\in\mathbb{R}^d : x_d>0\right\},
\]
and
\[
\partial \mathbb{R}^d_+
=
\left\{x=(x_1,\ldots,x_d)\in\mathbb{R}^d : x_d=0\right\}
=
\mathbb{R}^{d-1}\times\{0\}.
\]
For real numbers $a,b$, we write $a \lesssim b$ or $a = O(b)$ if there exists a constant $C > 0$ (which may depend on other parameters depending on context) such that $a \leq C b$. If both $a\lesssim b$ and $b \lesssim a$, we write that $a \asymp b$. We use $A \Subset B$ to mean the set $A$ is compactly contained in $B$ i.e., if $\overline{A}$ is compact and $\overline{A}\subset {\rm int}(B)$.

\subsection{Preliminaries on Optimal Transport}
We provide a brief overview of the optimal transport (OT) problem over $\R^d$ with the squared Euclidean cost function, and direct interested readers to the  expositions of \cite{Villani2003, Santambrogio2015,CheNilRig25OT} for the full treatment of this topic. In our exposition, we place an emphasis on the results of optimal transport relevant to statistical quantile estimation. 

Problem~\eqref{eq:OT} is known as the \textit{Monge formulation} of optimal transport. Alternatively, one may consider the relaxed version, known as the \textit{Kantorovich formulation}, 
\begin{equation}\label{eq:kantorovich}
\cW_2^2(\mu,P):=\inf_{\pi\in \Pi(\mu, P)}\int \|z-x\|^2\,\pi(\rd z,\rd x),
\end{equation}
where $\Pi(\mu,P)$ is the set of probability measures (couplings) on $\R^d\times\R^d$ with marginals $\mu$ and $P$. We call $\cW_2(\mu, P)$ the $2$-Wasserstein distance. Moreover, the optimal coupling $\pi^\star$ for \eqref{eq:kantorovich} exists and is given by the graph of the OT map $\pi^\star = ({\rm id} \times \BQ_P)_\# \mu$.

A central result in optimal transport is Brenier's theorem (see, e.g., \cite{Cuesta1989NotesOT}, \cite{Ruschendorf.Rachev.1990}, \cite{brenier1991polar},  \cite[Theorem~1.2]{GangboMcCann1996}, or \cite[Theorem~2.12]{Villani2003}), which characterizes the optimal transport map and shows that the transport-based quantile function $\BQ_P$ can be expressed as the gradient of a convex function.
\begin{theorem}[Brenier's theorem]
There exists a convex function $\varphi:\R^d\to\R\cup\{+\infty\}$ such that $\BQ_P=\nabla\varphi$. If $P\ll \cL_d$ as well, then $\BQ_P$ is a.e.~invertible and $\BF_P:= [\BQ_P]^{-1} = \nabla \varphi^*$, where $\varphi^*(x):=\sup_{z\in \R^d} \left(\inner{x}{z} - \varphi(z)\right)$ is the convex conjugate of $\varphi$.
\end{theorem}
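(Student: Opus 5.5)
The plan follows the classical route through the Kantorovich problem~\eqref{eq:kantorovich}, cyclical monotonicity and Rockafellar's theorem. I first treat $\mu,P\in\cP_2(\R^d)$ and then indicate the extension.

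\emph{Existence and monotonicity of an optimal plan.} The set $\Pi(\mu,P)$ is tight, because its marginals are fixed. It is also weakly closed. The cost $\|z-x\|^2$ is continuous and bounded below, so $\pi\mapsto\int\|z-x\|^2\dd\pi$ is weakly lower semicontinuous. Prokhorov's theorem therefore gives a minimizer $\pi^\star$ of~\eqref{eq:kantorovich}. Next I show that $\operatorname{supp}(\pi^\star)$ is \emph{cyclically monotone}, meaning
\[
\sum_{i=1}^n\inner{x_i}{z_{i+1}-z_i}\le 0
\]
for all $(z_i,x_i)\in\operatorname{supp}(\pi^\star)$, with $z_{n+1}=z_1$. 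This is equivalent to cyclical monotonicity for the quadratic cost. I argue by contradiction: if the inequality failed, it would fail strictly on small neighbourhoods of the points $(z_i,x_i)$, each of which carries positive $\pi^\star$-mass. Cyclically permuting the second coordinates of a small amount of mass taken from these neighbourhoods then produces a new coupling with the same marginals and strictly smaller cost, contradicting optimality.

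\emph{From cyclical monotonicity to a gradient.} By Rockafellar's theorem, every cyclically monotone set $\Gamma$ lies in the graph of the subdifferential $\partial\varphi$ of some proper lower semicontinuous convex function $\varphi$. The function is given explicitly by
\[
\varphi(z)=\sup\Big\{\inner{x_n}{z-z_n}+\dots+\inner{x_0}{z_1-z_0}\Big\},
\]
where the supremum runs over finite chains in $\Gamma$ starting from a fixed $(z_0,x_0)\in\Gamma$. Consequently $\pi^\star$ is concentrated on $\partial\varphi$. The first coordinates of $\operatorname{supp}(\pi^\star)$ lie in the closure of $\operatorname{dom}\varphi$. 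A convex function is locally Lipschitz on $\operatorname{int}(\operatorname{dom}\varphi)$, and the boundary of a convex set is Lebesgue-null. Rademacher's theorem then shows that $\varphi$ is differentiable $\cL_d$-a.e., hence $\mu$-a.e. since $\mu\ll\cL_d$. At such points $\partial\varphi(z)=\{\nabla\varphi(z)\}$, so $\pi^\star=({\rm id}\times\nabla\varphi)_\#\mu$. In particular $(\nabla\varphi)_\#\mu=P$ and $\BQ_P=\nabla\varphi$.

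\emph{General $P$ and the inverse.} When second moments are not assumed, the variational problem may be infinite, and this is the step I expect to be the main obstacle. Here I would follow McCann's approach. Approximate $\mu$ and $P$ by discrete measures; for these, optimal plans exist and have cyclically monotone supports. Cyclical monotonicity is preserved under weak limits of plans, and $\Pi(\mu,P)$ is tight, so a limiting plan has cyclically monotone support. The Rockafellar argument then applies verbatim. The delicate point is checking that $\mu$ charges only points where $\varphi$ is finite and differentiable. For the inverse, assume also $P\ll\cL_d$ and use the Fenchel--Young equivalence $x\in\partial\varphi(z)\iff z\in\partial\varphi^*(x)$. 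Applying the same a.e.-differentiability argument to $\varphi^*$ and $P$ shows that $\pi^\star$ is also concentrated on the graph of $\nabla\varphi^*$, viewed from the $x$-side. Hence $\nabla\varphi^*(\nabla\varphi(z))=z$ for $\mu$-a.e.\ $z$ and $\nabla\varphi(\nabla\varphi^*(x))=x$ for $P$-a.e.\ $x$. This gives a.e.\ invertibility and $\BF_P=\nabla\varphi^*$.
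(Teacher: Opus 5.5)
The paper does not prove this statement; it quotes it from the cited literature (Brenier, Gangbo--McCann, Villani's Theorem~2.12). Your argument is the standard cyclical-monotonicity proof (as in Gangbo--McCann, and McCann when moments are not assumed), built from Prokhorov for an optimal plan, Rockafellar's theorem, a.e.\ differentiability of convex functions, discrete approximation, and Fenchel--Young for the inverse, and it is correct.

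Two small remarks. First, the ``delicate point'' you flag is already settled by your own argument: the limiting plan is carried by the graph of $\partial\varphi$, so $\mu$ is carried by the convex set $\operatorname{dom}\varphi$, whose boundary is Lebesgue-null. Second, the identification $\BQ_P=\nabla\varphi$ needs a justification. You can use the uniqueness built into the paper's definition of $\BQ_P$. Alternatively, run your argument on the plan $({\rm id}\times\BQ_P)_\#\mu$, which is optimal because the Monge and Kantorovich values coincide once one optimal plan is induced by a map.
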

Throughout the paper, we work with the following class of \textit{regular} probability measures. For any such measure $\nu$, we adopt the convention that $\Omega_\nu$ denotes $\operatorname{int}(\Omega_\nu)$ whenever it is used as the domain of a Sobolev space. In contrast, Hölder spaces on $\Omega_\nu$ are interpreted up to the boundary.

\begin{definition}\label{defn:regular-measures}
Fix $\alpha \in (0,1)$. We say that a probability measure $\nu\in \cP(\R^d)$ is {\it regular} if
\begin{enumerate}[label = (\alph*)]
\item Its support $\Omega_\nu =\operatorname{supp}(\nu)$ is bounded, convex, and ${\cC^{2,1}}$; 
\item $\nu\ll \cL_d$ with density $f_{\nu}:=\frac{\dd \nu}{\dd \cL_d} \in \cC^{1,\alpha}(\Omega_\nu)$;
\item There exist constants $0<\lambda\leq  \Lambda$ such that  
\begin{equation}\label{upperlowwer}
 \lambda  \leq f_\nu(x) \leq 
 \Lambda \quad\text{ for all $x\in \Omega_\nu$}.
\end{equation}
\end{enumerate}
\end{definition}
Many useful probability measures are regular. Let $\Omega$ be a closed, bounded, convex, and $\cC^{2,1}$ subset of $\R^d$.  Regular measures include the uniform measure on $\Omega$, denoted by $\cU(\Omega)$, and Gibbs measures with density $f_\nu(x) \propto e^{-g(x)} \indic{\Omega}(x)$, where $g \in \cC^{1, \alpha}(\Omega)$ is bounded. The latter class further includes truncated exponential family distributions that are absolutely continuous with respect to the Lebesgue measure. Moreover, finite mixtures of regular probability measures on $\Omega$ remain regular.

Finally, we recall the classical result due to \cite{caffarelli1992boundary,Caffarelli.96} and an improvement by~\cite{chen2021global}, see also \cite[Theorem~12.50]{villani2009optimal} for an exposition. 

\begin{theorem}[Caffarelli's theorem]\label{thm:Cafarrelli}
Assume that $\mu$ and $P$ are regular probability measures. Then there exists a constant 
\[
C
=
C\!\left(
d,\alpha,\lambda,\Lambda,
\Omega_\mu,\Omega_P,
\|f_\mu\|_{\cC^{1,\alpha}(\Omega_\mu)},
\|f_P\|_{\cC^{1,\alpha}(\Omega_P)}
\right)
>0
\]such that
\begin{equation*}
   \|\BF_P\|_{\cC^{2, \alpha}(\Omega_P)}+     \|\BQ_P\|_{\cC^{2, \alpha}(\Omega_\mu)}\leq C \quad \text{and} \quad \frac{1}{C} {\rm I}_d  \leq  \nabla \BQ_P(z)  \leq C{\rm I}_d ,\quad \forall z\in \Omega_\mu.
\end{equation*}
\end{theorem}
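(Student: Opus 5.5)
This statement is the classical Caffarelli regularity theorem, quoted with the refinement of \cite{chen2021global}. The plan is to assemble it from the regularity theory of the second boundary value problem for the Monge--Amp\`ere equation rather than to prove it from scratch. Write $\BQ_P=\nabla\varphi$ as in Brenier's theorem. Since $\mu$ and $P$ are absolutely continuous with densities bounded between $\lambda$ and $\Lambda$ on convex supports, $\varphi$ is a Brenier solution of
\[
\det \nabla^2\varphi(z)=\frac{f_\mu(z)}{f_P(\nabla\varphi(z))},\qquad \nabla\varphi(\Omega_\mu)=\Omega_P .
\]
\textbf{Step 1 (Alexandrov solution).} Because $\Omega_P$ is convex, Caffarelli's argument \citep{Caffarelli.96} shows that $\varphi$ is an Alexandrov solution in $\operatorname{int}(\Omega_\mu)$ with right-hand side pinched between $\lambda/\Lambda$ and $\Lambda/\lambda$. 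The point is that convexity of the target rules out singular parts of the Monge--Amp\`ere measure.

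\textbf{Step 2 (interior regularity).} Caffarelli's interior theory gives strict convexity and $\cC^{1,\beta}$ regularity. Since $f_\mu$ and $f_P$ are $\cC^{1,\alpha}$, the right-hand side is H\"older once $\nabla\varphi$ is. Interior $\cC^{2,\alpha}$ estimates then apply, and bootstrapping (the right-hand side becomes $\cC^{1,\alpha}$) yields $\varphi\in\cC^{3,\alpha}_{\rm Loc}$.

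\textbf{Step 3 (boundary regularity).} This is the step I expect to be the main obstacle, and here I would invoke \cite{caffarelli1992boundary} rather than redo it. When both domains are convex (uniformly convex in Caffarelli's original version), one first obtains global $\cC^{1,\beta}$ and strict convexity up to the boundary. The oblique derivative condition $h(\nabla\varphi)=0$, with $h$ a defining function of $\Omega_P$, is then shown to be uniformly oblique. Oblique-derivative Schauder theory after linearization gives $\varphi\in\cC^{2,\alpha}(\overline{\Omega_\mu})$. The refinement to merely convex $\cC^{2,1}$ domains, together with the higher $\cC^{3,\alpha}$ global bound, that is, $\BQ_P\in\cC^{2,\alpha}$, is precisely \cite{chen2021global}. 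Its constants depend only on $d,\alpha,\lambda,\Lambda$, the domains, and the $\cC^{1,\alpha}$ norms of the densities. The same argument applied to $\varphi^*$ with the roles of $\mu$ and $P$ exchanged gives the bound on $\BF_P$.

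\textbf{Step 4 (two-sided Hessian bound).} A global $\cC^2$ bound gives $\nabla\BQ_P\preceq C\,\mathrm I_d$. For the lower bound, note that the eigenvalues of $\nabla^2\varphi$ are bounded above by $C$ and their product is at least $\lambda/\Lambda$. Hence each eigenvalue is at least $(\lambda/\Lambda)C^{-(d-1)}$, so $\nabla\BQ_P\succeq C^{-1}\mathrm I_d$ after enlarging $C$.
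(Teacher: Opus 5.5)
Your proposal follows the paper's treatment: the paper does not prove this statement but quotes it from \cite{caffarelli1992boundary,Caffarelli.96} together with the global refinement of \cite{chen2021global} (see also \cite[Theorem~12.50]{villani2009optimal}). Your Steps~1--3 annotate that same citation chain, including the application to $\varphi^*$ that gives the bound on $\BF_P$. Your Step~4 correctly obtains the two-sided bound on $\nabla\BQ_P$ from the upper Hessian bound and $\det\nabla^2\varphi\geq\lambda/\Lambda$; equivalently, it follows from the $\cC^{1}$ bound on $\BF_P$ via $\nabla\BQ_P=[\nabla\BF_P\circ\BQ_P]^{-1}$.
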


\Cref{thm:Cafarrelli} is a key technical tool for our theoretical analysis. It allows us to ``lift'' the regularity of  $\mu$ and $P$ to the regularity properties of the quantile map $\BQ_P$ and its inverse $\BF_P$, which paves the way to the use of techniques from partial differential equations (PDE); see~\Cref{sec:solvability} for such an application.

\section{Main Results}\label{sect:Main-rsult}
Recall that $d\geq 2$. In this section, we present our main theoretical results on the influence function of the optimal transport quantiles. \Cref{Thm:InfluenceQuantiles-main} establishes the existence and uniqueness of this influence function and characterizes it in terms of a second-order elliptic PDE. \Cref{thm:Unbounded-IF} shows that the influence function admits a pole-type singularity around the point of evaluation $z$, thus the transport-based quantile function is infinitesimally sensitive to  \emph{inliers} --- points $x$ such that $\BF_P(x)$ is close to $z$. An interesting side product of these main proofs is  \Cref{prop:shape} which characterizes the size of the preimage of the perturbation point $x_0$ in $\Omega_\mu$ for small perturbation weight $t $. We believe this result to be interesting in its own right.

\begin{theorem}[Existence of the Influence Function]\label{Thm:InfluenceQuantiles-main}
    Let  $P$ and $\mu$ be regular probability measures according to~\Cref{defn:regular-measures}. Fix $x_0\in \Omega_P$ and define $z_0 = \BF_P(x_0)$  and $P_t=(1-t)P+ t \delta_{x_0} $ for $t \in [0,1]$.  Then the following holds: 
    \begin{enumerate}[label = (\roman*)]
        \item  There exists a function
    $$G_{x_0}\in \cC^{2}_{Loc}({\rm int}(\Omega_\mu) \setminus \{z_0\}) \cap  L^q(\Omega_\mu),$$ 
    for all $q \in \left(1,\; \frac{d(d+2)}{d^{2}+2d-4}\right)$, such that, for every open set $A$ contained in a compact subset of $ {\rm int}(\Omega_\mu) \setminus \{z_0\}$, there exists $t_0$ such that $\BQ_{P_t}\in \cC^{1}(\overline{A})$
    $$  \left\| \frac{ \BQ_{P_t}-\BQ_{P}}{t} -\nabla G_{x_0}\right\|_{\cC^{1}(\overline{A})} \longrightarrow 0, \quad {\text{as}}\quad t\downarrow 0.  $$
   In addition, writing $ \nabla\xi_t=\frac{ \BQ_{P_t}-\BQ_{P}}{t} $,  the convergence $\xi_t\to G_{x_0}$ holds in $L^q(\Omega_\mu)$ modulo constants. 
    \item  Furthermore, \(G_{x_0}\) is the unique function in \(L^q(\Omega_\mu)\) such that 
\begin{equation}\label{eq: compatibility condition}
\int_{\Omega_\mu}  G_{x_0} \dd \mu=0
\end{equation}
and 
\begin{equation}
    \label{eq:weak-sense}
    \int_{\Omega_\mu}
G_{x_0}\,
\operatorname{div}\!\left(
    f_\mu [\nabla \BQ_P]^{-1}\nabla \varphi
\right) \dd z
=
\int_{\Omega_\mu}\varphi\dd\mu-\varphi(z_0),
\end{equation}
for every \(\varphi\in W^{2,\frac{q}{q-1}}(\Omega_\mu)\) satisfying the homogeneous
Neumann condition
\[
\left\langle
    f_\mu[\nabla \BQ_P]^{-1}\nabla\varphi,
    n_\mu
\right\rangle = 0
\qquad\text{on }\partial\Omega_\mu 
\]
in the trace sense. 
    \end{enumerate}
\end{theorem}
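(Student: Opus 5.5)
We linearize the Monge--Ampère equation in the Brenier potentials and then control the resulting elliptic problem with a Dirac source in stages. Write $\BQ_{P_t}=\nabla\varphi_t$ and $\BQ_P=\nabla\varphi$, and set $\xi_t:=(\varphi_t-\varphi)/t$, normalized so that $\int\xi_t\,\dd\mu=0$. The pushforward identity $(\nabla\varphi_t)_\#\mu=P_t$ reads, for every test function $h$,
\[
\int h(\nabla\varphi_t)\,\dd\mu=(1-t)\int h\,\dd P+t\,h(x_0).
\]
We take $h=\psi\circ\BF_P$, with $\psi$ a $W^{2,q'}$ function on $\Omega_\mu$ satisfying the Neumann condition, and subtract the identity at $t=0$. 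A first-order Taylor expansion of $h(\nabla\varphi_t)-h(\nabla\varphi)$ around $\nabla\varphi$ gives
\[
\int \langle \nabla h(\BQ_P),\nabla\xi_t\rangle\,\dd\mu+R_t=\psi(z_0)-\int\psi\,\dd\mu .
\]
Here $\nabla h(\BQ_P)=[\nabla\BQ_P]^{-1}\nabla\psi$, and $R_t$ is quadratic in $t\nabla\xi_t$ divided by $t$. Integrating by parts with the Neumann condition converts the left side into $-\int\xi_t\operatorname{div}(f_\mu[\nabla\BQ_P]^{-1}\nabla\psi)\,\dd z$. This is \eqref{eq:weak-sense} up to the remainder $R_t$. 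I would carry out this step as \Cref{lemma:test-linearization}, using \Cref{thm:L2-estimate} to bound $\|\BQ_{P_t}-\BQ_P\|_{L^2(\mu)}^2=O(t^{1+\gamma})$ for some $\gamma>0$, so that $R_t\to0$.

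Next comes the $L^q$ bound (\Cref{Prop:Lq-estimate}), which I expect to be the main obstacle. The $L^2$ estimate only gives $\|\nabla\xi_t\|_{L^2}\lesssim t^{(\gamma-1)/2}$, which blows up as $t\to0$. The tool is duality. For $g\in L^{q'}$ with $\int g\,\dd z=0$, let $\psi_g$ solve the adjoint Neumann problem $\operatorname{div}(f_\mu[\nabla\BQ_P]^{-1}\nabla\psi_g)=g$. \Cref{thm:Cafarrelli} makes the coefficients $\cC^{1,\alpha}$ and uniformly elliptic, so the results of \Cref{sec:solvability} give $\|\psi_g\|_{W^{2,q'}}\lesssim\|g\|_{L^{q'}}$. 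Since $q'>d/2$, Sobolev embedding gives $\psi_g\in L^\infty$ and even a Hölder bound. Testing the identity above with $\psi_g$ yields
\[
\Big|\int\xi_t g\Big|\lesssim\|g\|_{L^{q'}}+|R_t(\psi_g)|.
\]
The remainder needs care. It is controlled by $\|\nabla^2 h\|_\infty\, t\|\nabla\xi_t\|_{L^2}^2$, but $\nabla^2\psi_g$ is only in $L^{q'}$, not $L^\infty$. I would try to handle it by writing the remainder through an integral along the segment between $\BQ_P$ and $\BQ_{P_t}$. I would then use Hölder together with the shape control of \Cref{prop:shape}: the preimage $K_{x_0,t}$ of the atom is essentially a ball of volume $\asymp t$, which localizes the region where $\nabla\xi_t$ is large. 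The admissible exponent range $q<d(d+2)/(d^2+2d-4)$ should come out of balancing these Hölder exponents against the $L^2$ rate. A bootstrap absorbing $\|\xi_t\|_{L^q}$ into the left side gives uniform boundedness for $t\le t_0$.

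With the uniform $L^q$ bound in hand, weak compactness gives subsequential limits $G$ in $L^q$. Passing to the limit in the linearized identity shows that every such $G$ satisfies \eqref{eq: compatibility condition} and \eqref{eq:weak-sense}. Uniqueness follows by duality: if two limits differ by $w$, then $\int w\,g=0$ for every admissible $g$, since every such $g$ is of the form $\operatorname{div}(\cdot\nabla\psi_g)$. Hence the whole family converges, which proves (ii) and the $L^q$ statement in (i). The range $q<d/(d-1)$ is consistent with the Green's-function singularity $|z-z_0|^{2-d}$.

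For the local $\cC^1$ convergence of the gradients, fix $A\Subset\operatorname{int}(\Omega_\mu)\setminus\{z_0\}$. By \Cref{prop:shape}, $K_{x_0,t}$ shrinks to $z_0$, so for small $t$ the map $\BQ_{P_t}$ solves on a neighborhood of $\overline A$ a Monge--Ampère equation with smooth data $(1-t)f_P$. Caffarelli's interior theory then gives uniform $\cC^{2,\alpha}$ bounds on $\varphi_t$ there. Subtracting the equations for $\varphi_t$ and $\varphi$, $\xi_t$ solves a linear uniformly elliptic equation $a^{ij}_t\partial_{ij}\xi_t=$ (bounded term), with $\cC^\alpha$ coefficients obtained by integrating the cofactor matrix along the segment. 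The local maximum principle (Moser) bounds $\|\xi_t\|_{L^\infty}$ on $A$ by its $L^q$ norm. Interior Schauder estimates then bound $\|\xi_t\|_{\cC^{2,\alpha}}$ uniformly, which is \Cref{prop:Uniform-estimate}. By Arzelà--Ascoli and the identification of the limit from the $L^q$ step, $\nabla\xi_t\to\nabla G_{x_0}$ in $\cC^1(\overline A)$, and also $G_{x_0}\in\cC^2_{\rm Loc}$.
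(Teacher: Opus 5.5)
Your overall architecture matches the paper's: linearize the pushforward identity tested against $\psi\circ\BF_P$, prove a uniform $L^q$ bound by duality with the Neumann problem, identify the limit and prove uniqueness by duality, then use shape control, interior Caffarelli regularity, the local maximum principle and Schauder estimates away from $z_0$. But the step you flag as the main obstacle is not closed, and it is the crux of the theorem.

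Your plan bounds the remainder through $\nabla^2\psi_g\in L^{q'}$ composed with $T_s=s\BQ_{P_t}+(1-s)\BQ_P$. That needs a density bound for $(T_s)_\#\mu$, which you do not supply. It is not even uniform in $s$: on $K_{t,x_0}$ one has $T_s(z)=sx_0+(1-s)\BQ_P(z)$, which squeezes mass $t$ into volume $\asymp(1-s)^dt$. The proposed bootstrap also has nothing to absorb. The remainder is $t^{-1}$ times an integral of a superlinear expression in $\BQ_{P_t}-\BQ_P=t\nabla\xi_t$, and $\|\xi_t-m_t\|_{L^q}$ does not control $\nabla\xi_t$.

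The missing idea is that no second derivatives of the test function are needed. For $q$ in the stated range, $q'>\frac{d(d+2)}{4}\ge d$. Morrey's inequality then gives $\|\psi_g\|_{\cC^{1,\eta}(\Omega_\mu)}\lesssim\|\psi_g\|_{W^{2,q'}(\Omega_\mu)}\lesssim 1$ with $\eta=1-d/q'$. What matters is that $\nabla\psi_g$ is H\"older, not merely $\psi_g$, which is all you recorded. Since $\Omega_P$ is convex, the segment between $\BQ_P$ and $\BQ_{P_t}$ stays in $\Omega_P$. So the first-order Taylor remainder of $h=\psi_g\circ\BF_P$ is bounded pointwise by $[\nabla h]_{\cC^{0,\eta}}\|\BQ_{P_t}-\BQ_P\|^{1+\eta}$. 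Jensen's inequality and \Cref{thm:L2-estimate} then give, for $d\ge 3$,
\[
|R_t|\lesssim \frac1t\,\|\BQ_{P_t}-\BQ_P\|_{L^2(\mu)}^{1+\eta}\lesssim t^{\frac{(d+2)(1+\eta)}{2d}-1}.
\]
This is bounded exactly when $\eta\ge\frac{d-2}{d+2}$, i.e.\ $q'\ge\frac{d(d+2)}{4}$, i.e.\ $q\le\frac{d(d+2)}{d^2+2d-4}$. That is where the exponent range comes from; for $d=2$ any $\eta>0$ suffices.

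Now take $g=\operatorname{sgn}(\xi_t-m_t)|\xi_t-m_t|^{q-1}/\|\xi_t-m_t\|_{L^q}^{q-1}$, with $m_t$ chosen so that $g$ has mean zero. Integration by parts gives $\|\xi_t-m_t\|_{L^q(\Omega_\mu)}\lesssim\|\psi_g\|_{\cC^{1,\eta}}\lesssim 1$ directly, with no shape control and no absorption. This is the content of \Cref{lemma:test-linearization} and \Cref{Prop:Lq-estimate}.

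Two smaller points.
\begin{itemize}
\item Weak compactness plus uniqueness only gives weak convergence of $\xi_t-m_t$ in $L^q(\Omega_\mu)$, whereas (i) asserts norm convergence. You can upgrade this with your own ingredients because the $q$-range is open. A uniform $L^{\tilde q}$ bound for some $\tilde q\in\bigl(q,\frac{d(d+2)}{d^2+2d-4}\bigr)$ makes the contribution of small neighbourhoods of $z_0$ and $\partial\Omega_\mu$ uniformly small. The local $\cC^{2,\alpha}$ bounds give uniform convergence on the rest. The paper packages this via the Kolmogorov--Riesz--Fr\'echet criterion.
\item In the local step, $t^{-1}\bigl(\log f_P(\BQ_{P_t})-\log f_P(\BQ_P)\bigr)$ is a first-order term $\langle b_t,\nabla\xi_t\rangle$ with bounded $b_t$, not a bounded right-hand side. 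It belongs in the operator, which the local maximum principle and Schauder estimates accommodate.
\end{itemize}
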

We call $\nabla G_{x_0}$ \emph{the influence function} of the transport map $\BQ_P$ at $x_0$. It quantifies the local sensitivity of the optimal transport map under a point mass perturbation at $x_0$, as in classical robust statistics \citep{hampel1974influence,HuberRonchetti2009}.  \Cref{Thm:InfluenceQuantiles-main} states that this influence function of the optimal transport map exists and is identified by the solution of~\eqref{eq:PDE-main}.

Below we remark on a few fine-grained details about this result. 
\begin{remark}\label{rmk:existence}
\begin{enumerate}[label = (\alph*)]
\item The range $q \in \left(1, d(d+2) /\left(d^{2}+2d-4\right)\right)$ of function norms in ~\Cref{Thm:InfluenceQuantiles-main}~(i) is specifically  chosen to obtain a uniform $L_q(\Omega_\mu)$ estimate of the influence function; see \Cref{Prop:Lq-estimate}.  In fact, the influence function $\nabla G_{x_0}$ is integrable in the $L^q$ sense for all $q\in (1, d/(d-1))$, and the $L^q(\Omega_\mu)$ norm of $G_{x_0}$, denoted $\|G_{x_0} \|_{L^q(\Omega_\mu)}$ can be controlled uniformly over all $x_0 \in \Omega_P$; see~\Cref{lemma:Lp-estimates-of-G}.
\item Assume that $z_0={\bf F}_P(x_0)\in {\rm int}(\Omega_\mu)$. 
We note that $\nabla \BQ_P$ is symmetric as the Hessian of a convex potential function. By \Cref{lemma:Lp-estimates-of-G}, $G_{x_0}$ solves
\begin{equation}
    \label{eq:PDE-main}
 {\rm div} (f_\mu [\nabla\BQ_{P}]^{-1} \nabla G ) =\mu-\delta_{z_0} 
\end{equation}
in the distributional sense.

\item Moreover, we may formally take $g=G_x$ (resp.\ $g=G_{\BQ_P(z)}$) in~\eqref{eq:weak-sense} with $z_0=z$ (resp.\ $z_0=\BF_P(x)$), and obtain the identity %
\begin{equation}\label{eq:symmetry}
    G_x(z) =  G_{\BQ_P(z)}(\BF_P(x)),\quad x\neq \BQ_P(z).
\end{equation}
See~\cref{lem:green-symmetry} for a rigorous proof. The form above suggests that one can compute the influence function at a fixed point $z\in {\rm int}(\Omega_\mu)$ with the following steps:
1) determine the function $G_{\BQ_P(z')}$ for $z'$ near $z$, that is, the solution of \eqref{eq:PDE-main} with perturbation point $\BQ_P(z')$;
2) use the symmetry in~\eqref{eq:symmetry} to recover $G_{(\cdot)}(z')$ for all $z'$ near $z$;
3) approximate $\nabla G_{(\cdot)}(z)$ using a finite-difference scheme.  Finally, we obtain the influence function by noting that $\BI(\,\cdot\,;\BQ_{P}(z)) =\nabla G_{(\cdot)}(z)$.

\end{enumerate}
\end{remark}

In the one-dimensional setting,   the exact formula for the influence function of the quantile $\BQ_P(z)$ (with the standard reference $\mu={\rm Uniform}(0,1)$) is
\begin{equation*}
    \BI(x_0;\BQ_P(z))=
\frac{1}{f_P(\BQ_P(z))}\Bigl\{z-\mathbf{1}\{x_0\le \BQ_P(z)\}\Bigr\}, 
\end{equation*}
which is bounded but has a jump discontinuity at $x_0=\BQ_P(z)$ \cite[p. 56]{HuberRonchetti2009}. This formula satisfies the PDE characterization in ~\eqref{eq:PDE-main} in one dimension. 

While the influence function of a one-dimensional quantile is uniformly bounded, our next result (\Cref{thm:Unbounded-IF}) shows that for $d \ge 2$, the  influence function $\BI(x_0;\BQ_P(\cdot))$ admits a pole-type singularity at $\BF_P(x_0)$. In other words, it is both  unbounded around and discontinuous at $\BF_P(x_0)$. We note that this result seems counterintuitive in light of the good breakdown properties of transport quantiles obtained in \cite{paindaveine:passeggeri2026,Avella-Gonzalez.26,gonzalezsanz:avellamedina2026}. We remark, however, that while unbounded influence functions are commonly associated with $0$ breakdown point functionals,  unbounded influence functions do not preclude a high breakdown point and there are well-known examples where this occurs for one-dimensional rank-based location estimators \cite[Examples~3.9 and 3.11]
{HuberRonchetti2009}.

\begin{theorem}[Unboundedness of the Influence Function]\label{thm:Unbounded-IF}   Let  $P$ and $\mu$ be regular. For $z\in {\rm int}(\Omega_\mu)$, the following holds: 
\begin{enumerate}
    \item There  exists a constant $C>0$ such that, for every $x_0\in \Omega_P\setminus\{\BQ_P(z)\}$, 
$$   \|\BI(x_0;\BQ_{P}(z))\| \leq  \frac{C}{\|z-\BF_P(x_0)\|^{d-1}} .$$
\item There exist $r,C>0$ such that, for every $x_0\in \BB_{r}(\BQ_P(z))\setminus \{\BQ_P(z)\}$,   
$$   \frac{1}{C\|z-\BF_P(x_0)\|^{d-1}} \leq \|\BI(x_0;\BQ_{P}(z))\| \leq  \frac{C}{\|z-\BF_P(x_0)\|^{d-1}} .$$
\end{enumerate}  
\end{theorem}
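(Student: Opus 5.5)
The plan is to reduce both statements to pointwise gradient bounds for the Green's function of the divergence-form operator $L u := \operatorname{div}(A\nabla u)$, where $A := f_\mu [\nabla \BQ_P]^{-1}$, on $\Omega_\mu$ with Neumann boundary condition. By \Cref{thm:Cafarrelli}, $A$ is uniformly elliptic and belongs to $\cC^{1,\alpha}(\Omega_\mu)$, since $f_\mu\in\cC^{1,\alpha}$ and $\nabla\BQ_P\in \cC^{1,\alpha}$ with $\nabla\BQ_P\succeq C^{-1}{\rm I}_d$. By \Cref{Thm:InfluenceQuantiles-main}, $\BI(x_0;\BQ_P(z))=\nabla G_{x_0}(z)$, where $G_{x_0}$ solves \eqref{eq:PDE-main} with pole at $z_0=\BF_P(x_0)$. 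Set $\rho:=\|z-z_0\|$.

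\textbf{Upper bound.} Fix $z\in{\rm int}(\Omega_\mu)$ and let $\delta:={\rm dist}(z,\partial\Omega_\mu)$. If $\rho\ge \delta/2$, work on the ball $\BB_{\delta/4}(z)$. There $G_{x_0}$ solves $LG=\mu$, since the Dirac mass is outside the ball. Interior Schauder/$W^{1,\infty}$ estimates give
\[
|\nabla G_{x_0}(z)|\lesssim \delta^{-1}\|G_{x_0}\|_{L^1(\BB_{\delta/2}(z))}+\delta.
\]
The $L^q$ norm of $G_{x_0}$ is bounded uniformly in $x_0$ by \Cref{lemma:Lp-estimates-of-G}, so this is $O(1)\lesssim \rho^{-(d-1)}$ because $\rho$ is bounded by ${\rm diam}\,\Omega_\mu$. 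If $\rho<\delta/2$, the pole is interior. Write
\[
G_{x_0}=\Gamma_{z_0}+H,
\]
where $\Gamma_{z_0}$ is the fundamental solution of $L$, localized with a cutoff supported in $\BB_{\delta/4}(z_0)$ where necessary. Grüter--Widman estimates for divergence-form operators with $\cC^{\alpha}$ coefficients give $|\Gamma_{z_0}(y)|\lesssim \|y-z_0\|^{2-d}$ (logarithmic if $d=2$). For $\cC^{1,\alpha}$ coefficients they also give the gradient bound $|\nabla\Gamma_{z_0}(y)|\lesssim\|y-z_0\|^{1-d}$. The remainder $H$ solves $LH=\mu-L(\text{cutoff error})$ with a bounded right-hand side and Neumann data, so $H$ has uniformly bounded gradient near $z$. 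Alternatively, the gradient bound can be obtained by scaling: apply an interior gradient estimate on $\BB_{\rho/2}(z)$ to $G_{x_0}$ minus its mean there. That mean-oscillation is bounded by $\sup_{\BB_{\rho/2}(z)}|G-c|\lesssim\rho^{2-d}$ (with the $d=2$ analogue), which yields $|\nabla G|\lesssim \rho^{-1}\rho^{2-d}=\rho^{1-d}$. All constants are uniform in $z_0$ because the coefficient norms are fixed. This proves item~1 and the upper half of item~2.

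\textbf{Lower bound.} This is the main obstacle. Take $r$ small so that $x_0\in\BB_r(\BQ_P(z))$ forces $z_0$ to lie within $\delta/4$ of $z$ (using that $\BF_P$ is Lipschitz). Freeze the coefficients at $z_0$ with $A_0:=A(z_0)$. The frozen fundamental solution is
\[
\Gamma^0_{z_0}(y)=c_d\,\big\langle A_0^{-1}(y-z_0),y-z_0\big\rangle^{(2-d)/2}(\det A_0)^{-1/2}
\]
(logarithmic for $d=2$). It satisfies $|\nabla\Gamma^0_{z_0}(y)|\ge c\|y-z_0\|^{1-d}$ exactly, since its gradient is $A_0^{-1}(y-z_0)$ times a positive radial factor. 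The key comparison is then $|\nabla(G_{x_0}-\Gamma^0_{z_0})(y)|\lesssim\|y-z_0\|^{1-d+\alpha}$ near $z_0$. The difference $w$ solves $Lw=\mu-\operatorname{div}((A-A_0)\nabla\Gamma^0)$ near the pole. The source is a divergence of a field of size $\|y-z_0\|^{\alpha+1-d}$, and a dyadic Schauder argument (again by scaling on annuli, using $A\in\cC^{\alpha}$) gives the stated improved bound on $\nabla w$. Therefore
\[
|\nabla G_{x_0}(z)|\ge c\rho^{1-d}-C\rho^{1-d+\alpha}\ge \tfrac{c}{2}\rho^{1-d}
\]
for $\rho$ small, which fixes $r$. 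Care is needed so that the constants are uniform over $z_0$ in a compact neighborhood of $z$. This holds because $A$ has uniform $\cC^{1,\alpha}$ norm and ellipticity, and all estimates are carried out on balls of fixed size contained in ${\rm int}(\Omega_\mu)$, away from the Neumann boundary.
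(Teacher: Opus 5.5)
Your upper bound is a legitimate route, though it differs from the paper's. You import Gr\"uter--Widman gradient bounds for a fundamental solution of $\operatorname{div}(A\nabla\cdot)$, patch it into the Neumann problem with a cutoff, and prove item~1 directly by splitting into $\rho\ge\delta/2$ and $\rho<\delta/2$. The paper instead gets both halves of item~2 from one decomposition $G_{x_0}=\Phi_{x_0}+v$, and proves item~1 by compactness, using only \Cref{lemma:Lp-estimates-of-G}(ii) away from the pole. (Your alternative scaling route would need, for $d=2$, the bound $\operatorname{osc}_{\BB_{\rho/2}(z)}G_{x_0}\lesssim 1$, which does not follow from a logarithmic size bound; the cutoff route avoids this.)

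The genuine gap is the key comparison in the lower bound. Interior Schauder on $\BB_{\rho/2}(z)$ gives
\[
\rho\,\|\nabla w\|_{L^\infty(\BB_{\rho/4}(z))}\lesssim \operatorname{osc}_{\BB_{\rho/2}(z)}w+\rho\|F\|_{L^\infty}+\rho^{1+\alpha}[F]_{\cC^{0,\alpha}}+\rho^{2},\qquad F:=(A-A_0)\nabla\Gamma^0_{z_0},
\]
with the norms of $F$ taken on $\BB_{\rho/2}(z)$. The $F$-terms are indeed $O(\rho^{2-d+\alpha})$. But nothing in your proposal bounds $\operatorname{osc}_{\BB_{\rho/2}(z)}w$ by $\rho^{2-d+\alpha}$. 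The only size information available is $|G_{x_0}|+|\Gamma^0_{z_0}|\lesssim\rho^{2-d}$. That returns $|\nabla w(z)|\lesssim\rho^{1-d}$, the same order as $|\nabla\Gamma^0_{z_0}(z)|$, so the subtraction $c\rho^{1-d}-C\rho^{1-d+\alpha}$ never becomes available.

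No iteration over dyadic annuli can repair this. For $c\neq 0$, the function $w+c\,\Gamma^0_{z_0}$ solves an equation of exactly the same type on every annulus, with source $\mu-(1-c)\operatorname{div}F$. It obeys the same crude size bound, yet its gradient is of exact order $\rho^{1-d}$. So any argument that sees only annuli cannot produce the $\rho^{\alpha}$ gain.

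What singles out $w$ is that the two Dirac masses cancel, so $w$ solves its equation on a full neighbourhood of $z_0$. That information must enter through an estimate across the pole. The paper does this for its remainder $v=G_{x_0}-\Phi_{x_0}$ (your $w$), as follows.
\begin{itemize}
\item Using $A\in\cC^{1,\alpha}$, it writes $\BL_1 v=f_0$ in nondivergence form with $|f_0(y)|\lesssim\|y-z_0\|^{1-d}$. Hence $f_0\in L^p$ for $p<d/(d-1)$.
\item Interior $W^{2,p}$ estimates on $\BB_{\delta/2}(z)$ apply, with $\|v\|_{L^p(\Omega_\mu)}$ bounded uniformly in $x_0$ by \Cref{lemma:Lp-estimates-of-G}(ii). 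With Sobolev embedding, these place $v$ in $L^q$ for every $q<d/(d-3)$ when $d\geq 3$. This is strictly better than the $L^{d/(d-2)}$ threshold of $\Gamma^0_{z_0}$.
\item The local maximum principle on $\BB_{\rho/2}(z)$ with $q=d/(d-2-\alpha)$ then yields $\sup_{\BB_{\rho/4}(z)}|v|\lesssim\rho^{2-d+\alpha}$.
\item Only after this does Schauder give $|\nabla v(z)|\lesssim\rho^{1-d+\alpha}$.
\end{itemize}
For $d=2$, the corresponding input is $W^{2,p}\hookrightarrow\cC^{0,\alpha}$ for $p\in(2/(2-\alpha),2)$, which gives $\operatorname{osc}_{\BB_{\rho/4}(z)}v\lesssim\rho^{\alpha}$.

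A divergence-form variant would also close the argument, at the cost of an arbitrarily small loss in the exponent. Use $F\in L^s$ for $s<d/(d-1-\alpha)$, then $W^{1,s}$ estimates across the pole, then the local maximum principle with divergence data.
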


Theorem~\ref{thm:Unbounded-IF} gives two different types of estimates.
The upper bound in part~{\rm (i)} is global: it holds for every
$x_0\in\Omega_P\setminus\{\BQ_P(z)\}$, including perturbation points
for which $\BF_P(x_0)$ lies on $\partial\Omega_\mu$. In contrast, the
two-sided comparison in part~{\rm (ii)} is local. More precisely, as
$x_0\to\BQ_P(z)$, equivalently as $\BF_P(x_0)\to z$, we have
\[
\|\BI(x_0;\BQ_P(z))\|
\asymp
\frac{1}
{\|z-\BF_P(x_0)\|^{d-1}}.
\]
Thus, the transport-based quantile $\BQ_P(z)$ is particularly
sensitive to perturbations at inliers, namely points $x_0$ for which
$\BF_P(x_0)$ is close to $z$. The singularity in part~{\rm (ii)} is an interior singularity.
Indeed, since $z\in{\rm int}(\Omega_\mu)$, whenever $x_0$ is
sufficiently close to $\BQ_P(z)$, the point
$z_0=\BF_P(x_0)$ remains a fixed positive distance from
$\partial\Omega_\mu$. In this regime, $G_{x_0}$ satisfies a uniformly
elliptic equation with an interior Dirac source at $z_0$, and the
full-space frozen fundamental solution is therefore the appropriate
local model. 

\begin{figure}
    \centering
        \begin{subfigure}[t]{0.48\linewidth}
        \centering
        \includegraphics[width=\linewidth]{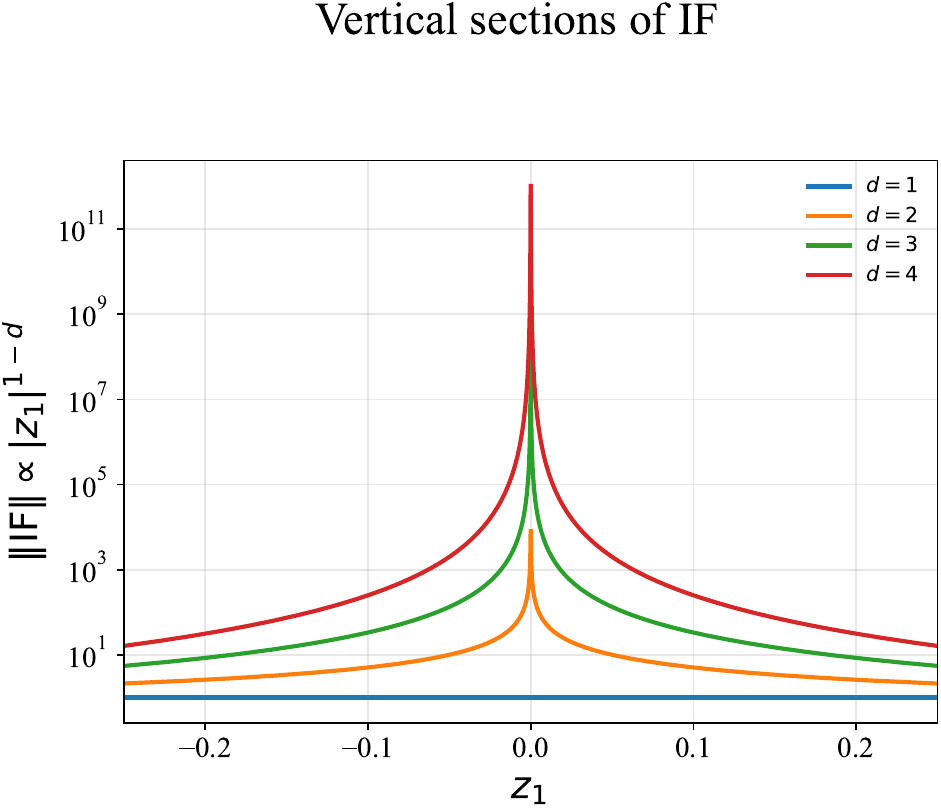}
        \label{fig:singularity-sections}
    \end{subfigure}
        \hfill
    \begin{subfigure}[t]{0.48\linewidth}
        \centering
        \includegraphics[width=\linewidth]{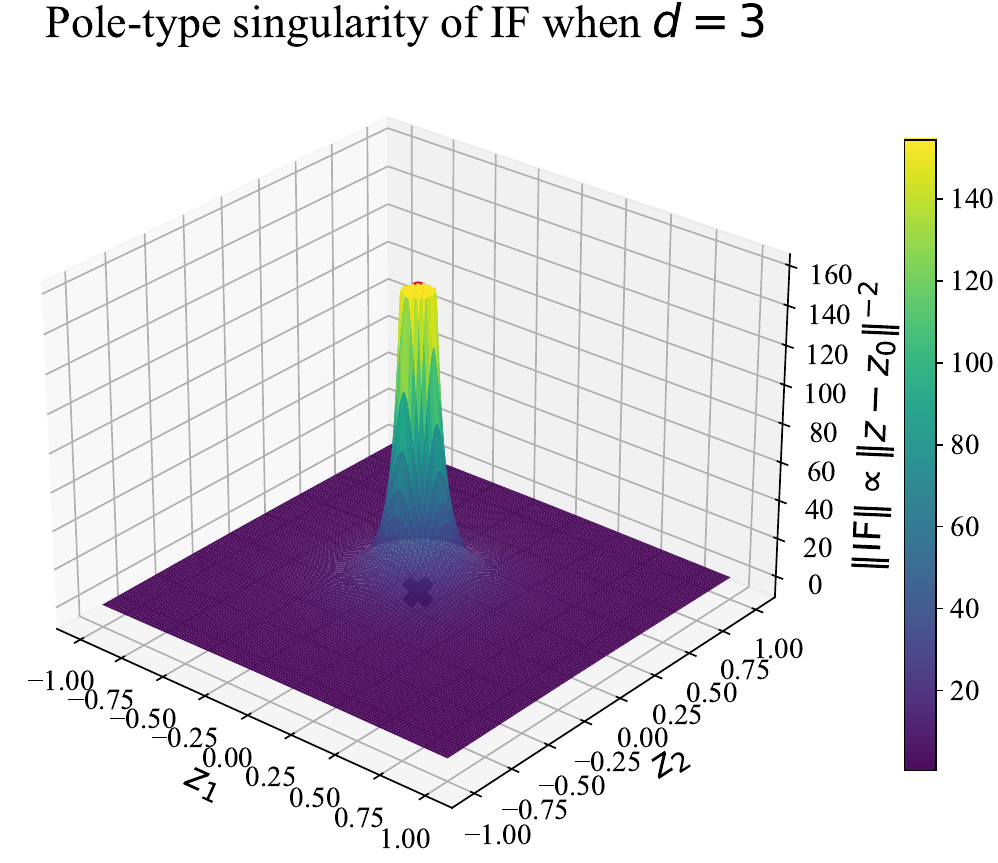}
        \label{fig:singularity-main-d=3}
    \end{subfigure}
    \caption{The left panel shows the norm of the influence function of $\BQ_P(0)$ computed in \Cref{prop:Example-uniform} along the line $(z_1,0,\ldots, 0)$, for $d=1,2,3,4$. The right panel shows the norm of the same influence function in dimension $d=3$. Notice that the influence function is $0$ at $z_0=0$, but has a singularity around it. }
    \label{fig:sensitivity_and_singularity}
\end{figure}
\begin{remark}
\begin{enumerate}[label = (\alph*)]
    \item When $x_0=\BQ_P(z)$, we have $\BQ_{P_t}(z) = \BQ_P(z)$ for all $t\in [0,1]$. Therefore,~$ \BI(x_0;\BQ_{P}(z)) = 0$ if $x_0 =  \BQ_{P}(z)$. In other words, perturbing the OT quantile $\BQ_P(z)$ exactly at the same point does not change its location. 
\item By direct computation,  we derive that the influence function is also unbounded in the $L_2$-sense, i.e., $\BI(\cdot;\BQ_P(z)) \not\in L^2(P)$ for any $z$. See~\Cref{coro:IF-property} in~\Cref{sec:asymptotic}. 
\item The mapping $\cG:\Omega_P \times \Omega_\mu\to \R$ given by
\begin{equation} \label{rmk:green-function}
    \cG(x,z)=G_x(z), 
\end{equation}
 where $G_x$ is given in \Cref{Thm:InfluenceQuantiles-main}, can be recognized as the \textit{Green function} associated with this uniformly elliptic PDE, which is well known to exhibit a pole-type singularity; see~\cite{LittmanStampacchiaWeinberger1963Green,GruterWidman1982GreenFunction} for detailed discussions. We also note that the connection between the PDE and the Green function is briefly mentioned in \cite[Lemma~35]{manole2023central}.
\end{enumerate}
\end{remark}

Let $\varphi_{t,x_0}$ be the Brenier potential of the transport map $\BQ_{P_t}$, and let $K_{t,x_0}:=(\partial \varphi_{t,x_0})^{-1}(x_0)$ be its preimage in $\Omega_\mu$. We now characterize the size of the preimage of $x_0$ under $\BQ_{P_t}$.  The next result shows, for small $t$, the set $K_{t,x_0}$ behaves like a ball for $d\geq 3$. This  shape characterization is a byproduct of the proof of \Cref{Thm:InfluenceQuantiles-main}.  
\begin{proposition}\label{prop:shape}
Let $P$ and $\mu$ be regular probability measures and $x_0\in \Omega_P$. Let $Z_{t,x_0}$ be the barycenter of the set $K_{t,x_0}$ defined above. There exist $R,t_0,C>0$ such that, for every $t\in(0,t_0)$, the following holds:
\begin{enumerate}[label=(\roman*)]
    \item If $d\ge 3$, then
    $\BB_{R^{-1}t^{\frac{1}{d}}}(Z_{t,x_0}) \subset K_{t,x_0} \subset \BB_{Rt^{\frac{1}{d}}}(Z_{t,x_0})$ and $\inf_{w\in K_{t,x_0}}\| \mathbf{F}_P (x_0)-w \|\leq C t^{\frac{1}{d}} $. 
    \item If $d=2$, then $K_{t,x_0}\subset \BB_{R\sqrt{t|\log(t)|}}(Z_{t,x_0})$ and $\inf_{w\in K_{t,x_0}}\| \mathbf{F}_P (x_0)-w \|\leq C \sqrt{t|\log(t)|} $.
\end{enumerate}
\end{proposition}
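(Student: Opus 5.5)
The proof will combine the $L^2$ estimate (\Cref{thm:L2-estimate}), which controls $\|\BQ_{P_t}-\BQ_P\|_{L^2(\mu)}$, with convex-geometric information about the subdifferential of the perturbed Brenier potential $\varphi_{t,x_0}$. The first step is to record the mass of $K_{t,x_0}$: since $\nabla\varphi_{t,x_0}\#\mu=(1-t)P+t\delta_{x_0}$ and the atom can only be produced by a set of positive $\mu$-measure, we get $\mu(K_{t,x_0})=t$. Hence $\cL_d(K_{t,x_0})\asymp t$ by \eqref{upperlowwer}. Moreover, $K_{t,x_0}=\partial\varphi_{t,x_0}^*(x_0)$ is a closed convex set. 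Convexity is what will let us trade volume and diameter bounds for inner and outer balls. By John's lemma, $K_{t,x_0}$ is comparable (up to dimensional constants) to an ellipsoid centered at its barycenter $Z_{t,x_0}$. The proof therefore reduces to showing that the diameter of $K_{t,x_0}$ is $O(t^{1/d})$ for $d\ge3$ and $O(\sqrt{t|\log t|})$ for $d=2$. For $d\ge3$ we then use volume $\asymp t$: if the outer radius is $\le Rt^{1/d}$, the smallest semi-axis is $\gtrsim t/(Rt^{1/d})^{d-1}=R^{1-d}t^{1/d}$, which yields the inner ball.

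To bound the diameter, we use that on $K_{t,x_0}$ the map $\BQ_{P_t}\equiv x_0$, while $\BQ_P$ is bi-Lipschitz by \Cref{thm:Cafarrelli}. If $\operatorname{diam}K_{t,x_0}=D$, convexity gives a segment of length $D$ inside $K_{t,x_0}$. On a substantial portion of $K_{t,x_0}$, at distance $\gtrsim D$ from $z_0=\BF_P(x_0)$, we have $\|\BQ_{P_t}-\BQ_P\|=\|x_0-\BQ_P(w)\|\gtrsim\|w-z_0\|$. Integrating gives
\[
\|\BQ_{P_t}-\BQ_P\|^2_{L^2(\mu)}\gtrsim \int_{K_{t,x_0}}\|w-z_0\|^2\dd w\gtrsim t\,\big(D^2+\operatorname{dist}(z_0,K_{t,x_0})^2\big),
\]
where the lower bound $\int_K\|w-z_0\|^2\ge c\,\cL_d(K)\operatorname{diam}(K)^2$ for convex $K$ follows from the John ellipsoid, since a fixed fraction of the mass lies at distance $\gtrsim$ the largest semi-axis from any given point. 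Comparing with the $L^2$ estimate, whose rate I expect to be $t^{1+2/d}$ for $d\ge3$ and $t^2|\log t|$ for $d=2$ (this is the rate \Cref{prop:Example-uniform}(ii) illustrates, coming from $\int_{\BB_r}|w|^{2-2d}$ with $r\asymp t^{1/d}$), gives $D^2+\operatorname{dist}(z_0,K)^2\lesssim t^{2/d}$, respectively $t|\log t|$. This produces both the outer-ball and the distance statements in (i) and (ii).

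The main obstacle is ensuring that the $L^2$ estimate is available in exactly this sharp form, uniformly in $x_0\in\Omega_P$. It also requires care near $\partial\Omega_\mu$: if $z_0$ is close to the boundary, $K_{t,x_0}$ may be clipped by $\partial\Omega_\mu$. There I would intersect with $\Omega_\mu$ and use that $\Omega_\mu$ is convex and $\cC^{2,1}$, so $K\cap\Omega_\mu$ remains convex with volume $\asymp t$. For $d=2$ only the outer bound is claimed, because the logarithmic loss prevents the volume argument from giving a matching inner radius. The constants $R,C,t_0$ come from \Cref{thm:Cafarrelli}, the $L^2$ estimate, and John's constant.
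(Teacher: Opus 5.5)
Your proposal is correct and follows essentially the same route as the paper. Both arguments use $\mu(K_{t,x_0})=t$, John's lemma centered at the barycenter, and the bi-Lipschitz bound on $\BQ_P$ to get $\|\BQ_{P_t}-\BQ_P\|_{L^2(\mu)}^2\gtrsim \int_{K_{t,x_0}}\|w-\BF_P(x_0)\|^2\dd w\gtrsim t\,\lambda_{1,t}^2$; the paper reaches this by passing to the barycenter via its minimizing property rather than your ``fixed fraction of mass'' argument. Then come comparison with \Cref{thm:L2-estimate}, the volume bound $\det A_t\le \lambda_{d,t}\lambda_{1,t}^{d-1}$ for the inner ball, and $\int_{K_{t,x_0}}\|w-\BF_P(x_0)\|^2\dd\mu\ge t\inf_{w\in K_{t,x_0}}\|\BF_P(x_0)-w\|^2$ for the distance bound.
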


\section{Examples}\label{sec:examples}
In this section, we present a simple example based on the uniform measure that illustrates various key properties of the influence function.  
Using this result, we further formulate a conjecture that the empirical OT quantile $\BQ_{P_n}(z)$ converges to its population counterpart
$\BQ_P(z)$ at a nonparametric rate, and that the limiting distribution is heavy-tailed. 

\subsection{An Explicit Example}\label{sec:Explicit}
In this section, we explicitly compute the transport quantile and its influence function when $\mu=\cU(\BB_1(0))$ and
$P_t=(1-t)\cU(\BB_1(0))+t\delta_0$ for $t\in(0,1)$. The first two points of the following result characterize the optimal transport map to the point-perturbed measure and bound its $L^2$ deviation from the identity map. Together, they motivate the proof strategy for \Cref{Thm:InfluenceQuantiles-main}, which builds on the critical $L^2$ estimate (see \Cref{thm:L2-estimate}). Moreover, as the third point makes clear, the influence function exists and exhibits a pole-type singularity at the origin; see \Cref{fig:sensitivity_and_singularity}.
\begin{proposition}\label{prop:Example-uniform}
Let $\mu=\cU(\BB_1(0))$. Then the following hold:
    \begin{enumerate}
        \item For $t\in (0,1)$, define the mapping
    \begin{equation}
    \begin{aligned}
    T_t(z) = \begin{cases}
    0 , &\text{if } z \in \BB_{t^{\frac{1}{d}}}(0), \\
    \dfrac{z}{\|z\|} \left(\dfrac{\|z\|^d - t}{1 - t}\right)^{\frac{1}{d}}, & \text{otherwise}.
    \end{cases}
    \end{aligned}
    \end{equation}
    Then $T_t$ is the optimal transport map pushing ${\cal U}(\BB_1(0))$ forward to $(1-t) {\cal U}(\BB_1(0)) + t \delta_0 $, where ${\cal U}(\BB_1(0))$ denotes the uniform probability measure on the unit ball; 
         \item For sufficiently small $t$, the $L^2$ estimate holds:
        $$ \int_{\BB_1(0)} \|T_t(z) -z\|^2 \dd z \lesssim \begin{cases}
      t^{1+\frac{2}{d}} & {\rm if} \ d\geq 3\\
t^{2}{|\log(t)|}     & {\rm if}\ d=2.
  \end{cases}   $$

    \item For any $x, z \in \BB(0,1)$ with $x\neq z $,  the influence function exists and is given by
    \begin{equation*}
        \BI(x;z)=
-\dfrac{1}{d}\dfrac{z-x}{\|z-x\|^{d}}
+
\int_0^{\|x\|}
\left(\left(s\langle z,x^\star\rangle - 1\right)
\frac{s z - x^\star}{\|s z - x^\star\|^{d+2}} -\frac{x^\star}{d\|s z - x^\star\|^{d}}\right) \dd s
+\frac{1}{d} z,
    \end{equation*}
    where $x^\star := x/\|x\|$ for $x\neq 0$ and $x^\star =0$ otherwise. 
    When $z=0$, it follows that
    \begin{equation}\label{eq:IF-uniform-origin}
         \BI(x;0) = \begin{cases}
            0,&\quad x=0\\
             \frac{1}{d}\left( \|x\|^{-d}- (1-d)\right)x,&\quad x\neq 0.
         \end{cases}
    \end{equation}
    \end{enumerate}
\end{proposition}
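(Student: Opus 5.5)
Part (i) is a direct verification. First I would show that $T_t$ is the gradient of a convex radial potential. Write $T_t(z)=h(\|z\|)\,z/\|z\|$ with
\[
h(r)=\Bigl(\tfrac{(r^d-t)_+}{1-t}\Bigr)^{1/d}.
\]
This $h$ is nondecreasing and vanishes on $[0,t^{1/d}]$. Hence $T_t=\nabla\phi_t$ with $\phi_t(z)=\int_0^{\|z\|}h(s)\dd s$, and $\phi_t$ is convex because it is a nondecreasing convex function of $r$ composed with the norm. Convexity of $\int_0^r h$ follows from $h$ nondecreasing.

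Next I would check the push-forward condition. The ball $\BB_{t^{1/d}}(0)$ has $\mu$-mass $t$ and is sent to $0$, which accounts for the atom $t\delta_0$. On the annulus $\{t^{1/d}<\|z\|<1\}$, the map is radial with $h(r)^d=(r^d-t)/(1-t)$. Therefore $\mu(\{\|T_t(z)\|\le \rho\})=t+(1-t)\rho^d$, and by rotational symmetry the image is $(1-t)\,\cU(\BB_1(0))$. Brenier's theorem then gives optimality.

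For part (ii), I would compute in polar coordinates:
\[
\int\|T_t-z\|^2\dd z=c_d\int_0^1 (r-h(r))^2 r^{d-1}\dd r.
\]
On $r<t^{1/d}$ the integrand is $r^{d+1}$, which contributes $t^{1+2/d}$. For $r\ge t^{1/d}$ I would use $r-h(r)=\bigl(r^d-h^d\bigr)/\sum_k r^{d-1-k}h^k$. Here $r^d-h^d=t(1-r^d)/(1-t)\le 2t$ and the denominator is at least $r^{d-1}$, so $r-h(r)\lesssim t\,r^{1-d}$. The annulus term is then bounded by $t^2\int_{t^{1/d}}^1 r^{1-d}\dd r$. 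For $d\ge3$ this is $\lesssim t^2\, t^{(2-d)/d}=t^{1+2/d}$, and for $d=2$ it is $t^2|\log t|/2$. This step is routine.

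Part (iii) is where the real work lies. The plan is to apply \Cref{Thm:InfluenceQuantiles-main}(ii), so that $\BI(x;z)=\nabla G_x(z)$ with $\nabla\BQ_P=\mathrm I_d$ and $f_\mu=1/|\BB_1|$. Then $G_x$ solves the Neumann problem $\Delta G=|\BB_1|(\mu-\delta_x)$ on the unit ball, i.e. $\Delta G=1-|\BB_1|\delta_x$, and $G_x$ is the Neumann Green function of the ball. I would build it from three pieces:
\begin{itemize}
\item the fundamental solution term, whose gradient is $-\frac1d (z-x)/\|z-x\|^d$ after normalizing by $|\BB_1|=|\mathbb S^{d-1}|/d$;
\item the quadratic $\|z\|^2/(2d)$, whose gradient gives the term $z/d$;
\item a harmonic corrector that restores the Neumann condition.
\end{itemize}
For the corrector I would use the classical formula for the Neumann function of the ball. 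It is a Kelvin-type image charge at $x/\|x\|^2$, plus a line integral along the segment from $0$ to $x^\star$ that cancels the residual normal flux. Differentiating under the integral should produce the $\int_0^{\|x\|}$ term.

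The main obstacle is getting this correction right, and the practical check is to verify directly that the stated gradient has zero normal component on $\|z\|=1$, together with the correct divergence. The constant is fixed by \eqref{eq: compatibility condition} but does not affect the gradient. Alternatively, one could derive (iii) directly from (i) for $x=0$ and then use symmetry.

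Finally, for $z=0$ I would substitute directly. The integrand becomes $\bigl(-(-x^\star)-x^\star/d\bigr)=(1-1/d)x^\star$, which integrates to $(1-1/d)x$. Combined with $\frac1d x/\|x\|^d$ this gives \eqref{eq:IF-uniform-origin}, and $\BI(0;0)=0$ follows from the remark that perturbing at $\BQ_P(z)$ leaves $\BQ_P(z)$ unchanged.
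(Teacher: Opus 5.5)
Your proposal is correct. For (i) and (iii) it follows the paper's proof. The paper also writes $T_t=\nabla f$ with $f(z)=g(\|z\|)$, $g'\geq 0$ nondecreasing, and appeals to Brenier. For (iii) it takes the Neumann function of the ball from Wirth's Proposition~1, removes its mean, and uses the uniqueness in \Cref{Thm:InfluenceQuantiles-main}(ii) to conclude $\BI(x;z)=\nabla_z G_x(z)$.

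You genuinely differ only in (ii). The paper substitutes $u=r^d$ and splits the annulus integral at $u=2t$. It bounds $[t,2t]$ crudely, and on $[2t,1]$ applies the mean value theorem in $t$ to $F_u(t)=((u-t)/(1-t))^{1/d}$. Its derivative bound $(u-t)^{1/d-1}$ blows up at $u=t$, which is what forces the split. Your identity $r-h(r)=(r^d-h^d)/\sum_k r^{d-1-k}h^k$, with $r^d-h^d=t(1-r^d)/(1-t)$ and denominator at least $r^{d-1}$, gives $r-h(r)\lesssim t\,r^{1-d}$ uniformly on the annulus. This leads to the same final integral $t^2\int_t^1u^{2/d-2}\,\rd u$ with no further splitting, so your route is shorter and slightly sharper near the inner radius.

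Three small remarks on (iii).

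\textbf{The image form.} The classical form is
\[
\Gamma(z-x)+\Gamma(\|x\|z-x^\star)+\tfrac1d\int_0^{\|x\|}\bigl(\|sz-x^\star\|^{2-d}-1\bigr)\tfrac{\rd s}{s}+\tfrac{\|z\|^2}{2d}
\]
for $d\geq 3$. Its line charge sits at the points $x^\star/s$, i.e.\ on the ray beyond the Kelvin point $x/\|x\|^2$, not on the segment from $0$ to $x^\star$. It coincides with the paper's representation, up to a constant, only after the integration by parts
\[
\frac{\inner{z}{x^\star}-1/s}{\|sz-x^\star\|^{d}}=\frac{1}{2-d}\,\partial_s\|sz-x^\star\|^{2-d}-\frac{1}{s}\,\|sz-x^\star\|^{2-d}.
\]
So differentiating it under the integral does not literally produce the stated integrand.

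\textbf{The direct check.} The verification you propose is quick. On $\|z\|=1$, the normal component of the integrand equals $\partial_s\bigl[\tfrac1d(1-s\inner{z}{x^\star})\|sz-x^\star\|^{-d}\bigr]$. Hence the integral equals $\tfrac1d(1-\inner{z}{x})\|z-x\|^{-d}-\tfrac1d$, which cancels the other two terms. Moreover, the integrand is the gradient of a dipole potential located at $x^\star/s\notin\BB_1(0)$, hence divergence free in the ball.

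\textbf{The alternative route.} Your suggested alternative through (i) and symmetry does not work. Part (i) only treats a perturbation at the origin. It gives $\BI(0;z)=\tfrac1d z\bigl(1-\|z\|^{-d}\bigr)$, which is a good check of the formula with $x^\star=0$, and, by symmetry, the values $G_x(0)=G_0(x)$. But $\BI(x;0)=\nabla_z G_x(z)|_{z=0}$ requires $G_{z}$ for sources $z$ near the origin, which (i) does not provide.
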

\begin{remark}
When $d =1$, the right-hand side of~\eqref{eq:IF-uniform-origin} becomes $ {\rm sgn}(x)$, recovering the influence function of the median of $\cU(\BB(0,1))$.
\end{remark}

\subsection{Asymptotic Representation of the Empirical OT Quantile}\label{sec:asymptotic}

Let $\BI(\cdot;\BQ_P(z))$ be the influence function identified in \Cref{Thm:InfluenceQuantiles-main}.   If one had $\sqrt{n}$-asymptotic normality, one would expect  the following asymptotic linear representation to hold
$$ \sqrt{n}\bigl(\BQ_{P_n}(z)-\BQ_P(z)\bigr)
=
\frac1{\sqrt n}\sum_{i=1}^n \tilde{\BI}(X_i;\BQ_P(z)) + o_P(1).$$
The following result shows that the influence function  is not square-integrable, providing evidence
against a standard $\sqrt{n}$-asymptotically linear representation with
a square-integrable influence function. This is an immediate consequence of~\Cref{Thm:InfluenceQuantiles-main} and~\Cref{thm:Unbounded-IF}. 
\begin{corollary}\label{coro:IF-property}
Assume the setting of~\Cref{Thm:InfluenceQuantiles-main}. Let $z\in {\rm int}(\Omega_\mu)$. Then the following hold: 
    \begin{enumerate}[label = (\roman*)]
        \item $\BI(\cdot;\BQ_P(z)) \in L^{q}(P)$ for any $q\in (1,d/(d-1))$;
        \item $\BI(\cdot; \BQ_P(z)) \not\in L^2(P)$;
        \item $\int_{\Omega_P}\BI(x;\BQ_P(z))\, dP(x)=0$.
    \end{enumerate}
\end{corollary}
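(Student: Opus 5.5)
The plan is to pass everything to the reference side through the change of variables $x=\BQ_P(w)$, $w\sim\mu$, and then apply the two-sided pole estimate of \Cref{thm:Unbounded-IF} together with the symmetry \eqref{eq:symmetry} and the weak formulation \eqref{eq:weak-sense}. Since $(\BQ_P)_\#\mu=P$ and $\BF_P=\BQ_P^{-1}$, we have
\[
\int_{\Omega_P}\|\BI(x;\BQ_P(z))\|^q\,\rd P(x)=\int_{\Omega_\mu}\|\BI(\BQ_P(w);\BQ_P(z))\|^q\,f_\mu(w)\dd w,
\]
and $\BF_P(\BQ_P(w))=w$. The single point $w=z$ is $\mu$-null, so it can be ignored. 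By \Cref{thm:Unbounded-IF}(i), the integrand is bounded by $C^q\Lambda\|z-w\|^{-q(d-1)}$ uniformly in $w\in\Omega_\mu\setminus\{z\}$. Because $\Omega_\mu$ is bounded, the function $\|z-w\|^{-q(d-1)}$ is integrable near $z$ exactly when $q(d-1)<d$. This proves (i) for all $q\in(1,d/(d-1))$.

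For (ii), use the lower bound in \Cref{thm:Unbounded-IF}(ii). The map $\BQ_P$ is a $\cC^{2,\alpha}$ diffeomorphism with $\nabla\BQ_P\preceq C\mathrm{I}_d$ (\Cref{thm:Cafarrelli}), so there is $\rho>0$ such that $\BB_\rho(z)\subset \mathrm{int}(\Omega_\mu)$ and $\BQ_P(\BB_\rho(z))\subset \BB_r(\BQ_P(z))$. On this ball the integrand is at least $C^{-2}\lambda\|z-w\|^{-2(d-1)}$. In polar coordinates the integral becomes $\int_0^\rho s^{-2d+2}s^{d-1}\dd s=\int_0^\rho s^{1-d}\dd s$, which diverges for every $d\ge 2$; the case $d=2$ gives a logarithmic divergence. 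Hence $\BI(\cdot;\BQ_P(z))\notin L^2(P)$.

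For (iii), I plan to differentiate the mean-zero constraint. The expected approach is to write $\BI(x;\BQ_P(z))=\nabla G_x(z)$ and use \eqref{eq:symmetry} in the form $G_x(z)=G_{\BQ_P(z)}(\BF_P(x))$. Then
\[
\int \BI(x;\BQ_P(z))\,\rd P(x)=\int_{\Omega_\mu}\nabla_z\,\cG(\BQ_P(w),z)\,\rd\mu(w).
\]
Define $H(z):=\int_{\Omega_\mu}\cG(\BQ_P(w),z)\,\rd\mu(w)=\int_{\Omega_\mu} G_{\BQ_P(z)}(w)\,\rd\mu(w)$, where the second equality uses symmetry. By the normalization \eqref{eq: compatibility condition}, $H\equiv 0$. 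It then remains to justify exchanging $\nabla_z$ with the $\mu$-integral.

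I expect this interchange to be the main obstacle, because the integrand has a non-integrable-looking singularity at $w=z$. The bound from part (i) settles it: the derivative $\nabla_z\cG(\BQ_P(w),z')$ is dominated by $C\|z'-w\|^{1-d}$, and this bound is locally uniformly integrable in $w$ for $z'$ near $z$. To use it rigorously, I would integrate $H(z')-H(z)$ along segments and apply Fubini, which avoids needing pointwise differentiation under the integral. Alternatively, I would take the limit of $\int(\BQ_{P_t}(z)-\BQ_P(z))/t$ via the identity $\int \BQ_{P_t}\,\rd\mu$ … but the Fubini route is cleaner.
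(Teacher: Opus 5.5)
Your proposal is correct and follows the paper's argument. Parts (i) and (ii) integrate the two-sided pole bound of \Cref{thm:Unbounded-IF} against $\mu$ after the change of variables $x=\BQ_P(w)$. Part (iii) combines the symmetry $G_{\BQ_P(w)}(z)=G_{\BQ_P(z)}(w)$, the normalization $\int G\,\rd\mu=0$, and an interchange of derivative and integral justified by the uniform integrability of $\|z'-w\|^{1-d}$ for $z'$ near $z$; the paper carries out this interchange with difference quotients and an explicit split into $\{\|\BF_P(x)-z\|\le R\}$ and its complement, which is a hands-on Vitali argument.

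One step is missing from your Fubini version. Set $\Psi(z'):=\int_{\Omega_\mu}\langle\nabla_2\cG(\BQ_P(w),z'),h\rangle\,\rd\mu(w)$. Passing from $\int_0^t\Psi(z+sh)\,\rd s=0$ to $\Psi(z)=0$ requires continuity of $\Psi$ at $z$, and that follows from the same uniform integrability via Vitali's theorem. So Fubini does not actually let you avoid a limit interchange under the integral.
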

\Cref{coro:IF-property} shows that $\|\BI(\cdot;\BQ_P(z))\|_{L^2(P)} = \infty$, due to the singular behavior of the influence function, thus $\BI(\cdot;\BQ_P(z))$ is not an efficient influence function in the sense of the convolution theorem \citep[Theorem 25.20]{VdV2000} and the local asymptotic minimax theorem \citep[Theorem 25.21]{VdV2000}. This provides evidence that empirical transport quantiles do not admit a standard $\sqrt n$-asymptotically linear expansion with a square-integrable influence function.

Instead, we hypothesize a different scaling.
\begin{conjecture}[Asymptotic linear form]\label{conjecture:AL-form}
Under the setting of~\Cref{Thm:InfluenceQuantiles-main}, we conjecture that 
    \begin{equation}\label{eq:d-rate}
  r_n\bigl(\BQ_{P_n}(z)-\BQ_P(z)\bigr)
    =
    \frac{r_n}{n}\sum_{i=1}^n \BI(X_i;\BQ_P(z)) + o_{\PP}(1),
\end{equation}
where
\[
r_n = \begin{cases}
\sqrt{n/\log(n)},&\quad d =2\\
    n^{1/d},&\quad d\geq 3. 
\end{cases}
\]
\end{conjecture}

Our conjecture is motivated by stable-limit theory; see
\cite{Mijnheer1975Stable,SamorodnitskyMurad1994}. Indeed,
\Cref{thm:Unbounded-IF} implies the two-sided tail estimate
\[
\PP_{X\sim P}\!\left(
\|\BI(X;\BQ_P(z))\|>\lambda
\right)
\asymp
\lambda^{-\gamma},
\qquad
\gamma=\frac{d}{d-1}.
\]
This identifies $\gamma$ as the natural candidate tail index and suggests
the normalization $n^{1/\gamma}$ for sums of the influence-function
terms. %
This tail behavior suggest that under appropriate conditions one could expect a stable domain-of-attraction theorem such as \cite[Theorem~1.8.1]{SamorodnitskyMurad1994}, whereby there exist centering constants $b_n$ and normalizing constants $a_n=n^{1/\gamma}L_0(n),$ 
where $L_0$ is slowly varying, such that 
\[
a_n^{-1}
\left(
\sum_{i=1}^n
\|\BI(X_i;\BQ_P(z))\|
-b_n
\right)
\overset{d}{\longrightarrow}
Z_\gamma,
\]
where $Z_\gamma$ is a $\gamma$-stable random variable. This scalar
heuristic motivates the scaling in our conjecture. Establishing the
corresponding vector-valued stable limit would additionally require
multivariate regular variation of $\BI(X;\BQ_P(z))$.

Proving the pointwise expansion~\eqref{eq:d-rate} is challenging because the influence function has a singularity. The next proposition establishes a weaker but rigorous version: the conjectured linearization holds after testing against any $\cC^{1,\eta}$ function.

\begin{proposition}\label{prop:asym-linear-form-test}
   Under the setting of~\cref{Thm:InfluenceQuantiles-main}, it follows that,
    \begin{equation*}\label{eq:integrated-linear-form}
         r_n\left|\int_{\Omega_\mu} \inner{\BQ_{P_n}-\BQ_P}{[\nabla \BQ_P]^{-1}\nabla g}\dd \mu -  \int_{\Omega_\mu}  \left\langle \frac{1}{n}\sum_{i=1}^n \BI(X_i;\BQ_P(\cdot)), [\nabla \BQ_P]^{-1}\nabla g\right\rangle \dd \mu  \right|  = o_{\PP}(1)
    \end{equation*}
      for all $g\in \cC^{1,\eta}(\Omega_\mu)$, $\eta\in (0,1]$.
 Moreover, for $d =2,3$,  $\eta \in ((d-2)/2,1]$, it holds that for any $g\in \cC^{1,\eta}(\Omega_\mu)$,
    \[
    \sqrt{n}\int_{\Omega_\mu} \inner{\BQ_{P_n}-\BQ_P}{[\nabla \BQ_P]^{-1}\nabla g}\dd \mu \overset{d}{\to } \cN(0,\sigma_g^2),\qquad \sigma_g^2:={\rm Var}_\mu(g(Z))<\infty.
    \]
\end{proposition}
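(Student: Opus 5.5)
The plan is to test the Monge–Ampère linearization against $\phi:=[\nabla\BQ_P]^{-1}\nabla g$ and to use the weak equation \eqref{eq:weak-sense} as an exact identity for the linear term.

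\textbf{Step 1: identify the linear term.} Fix $g\in\cC^{1,\eta}(\Omega_\mu)$. Assume first that $g$ is smooth with Neumann data $\langle f_\mu[\nabla\BQ_P]^{-1}\nabla g,n_\mu\rangle=0$. Since $\BI(x;\BQ_P(\cdot))=\nabla G_x$, integration by parts together with \eqref{eq:weak-sense} gives
\[
\int\langle\nabla G_x,[\nabla\BQ_P]^{-1}\nabla g\rangle\dd\mu=-\int G_x\operatorname{div}(f_\mu[\nabla\BQ_P]^{-1}\nabla g)\dd z=g(\BF_P(x))-\int g\dd\mu .
\]
Hence the averaged linear term is exactly $\frac1n\sum_i\bigl(g(\BF_P(X_i))-\E g(Z)\bigr)$. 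The $Z_i:=\BF_P(X_i)$ are i.i.d.\ with law $\mu$, so this average has variance $\mathrm{Var}_\mu(g)/n$.

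For a general $g\in\cC^{1,\eta}$ I will use a solvability result from \Cref{sec:solvability}: replace $g$ by the solution $h$ of the Neumann problem
\[
\operatorname{div}(f_\mu[\nabla\BQ_P]^{-1}\nabla h)=\operatorname{div}(f_\mu[\nabla\BQ_P]^{-1}\nabla g),
\]
so that $h$ has zero conormal flux. This bookkeeping is where boundary terms appear. To keep things consistent, I will define the tested quantity through $\int\langle T,\phi\rangle\dd\mu$ and carry the boundary flux of $g$ as an additional term, which will again be linear in $P_n-P$.

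\textbf{Step 2: the OT side.} Set $\psi_n$ and $\psi$ to be the Brenier potentials, and use the first-order optimality of OT: the dual potential $u_n$ satisfies $\BQ_{P_n}=\nabla u_n$. Write $\BQ_{P_n}-\BQ_P=\nabla(u_n-u)$. By the Monge–Ampère equation $f_P(\nabla u)\det\nabla^2u=f_\mu$ and its semidiscrete analogue,
\[
\int g\circ\BF_P\dd(P_n-P)=\int\bigl(g(\BF_P(\BQ_{P_n}(z)))-g(z)\bigr)\dd\mu(z).
\]
This identity holds because $\BQ_{P_n\#}\mu=P_n$ and $\BF_P\circ\BQ_P=\mathrm{id}$. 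Taylor-expanding $g\circ\BF_P$ around $\BQ_P(z)$ gives
\[
\int\langle\nabla(g\circ\BF_P)(\BQ_P),\BQ_{P_n}-\BQ_P\rangle\dd\mu+R_n,
\]
and $\nabla(g\circ\BF_P)(\BQ_P)=[\nabla\BQ_P]^{-1}\nabla g$ by symmetry of $\nabla\BQ_P$. This is the clean route: it bypasses Green functions entirely, and Step 1 only serves to confirm that the result matches the IF expression. Since $g\in\cC^{1,\eta}$ and $\BF_P\in\cC^{2,\alpha}$, the remainder satisfies $|R_n|\lesssim\int\|\BQ_{P_n}-\BQ_P\|^{1+\eta}\dd\mu$.

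\textbf{Step 3: remainder bound (main obstacle).} I need $r_n\int\|\BQ_{P_n}-\BQ_P\|^{1+\eta}\dd\mu=o_\PP(1)$. This requires an empirical analogue of \Cref{thm:L2-estimate}, namely a stability bound of the form
\[
\|\BQ_{P_n}-\BQ_P\|_{L^2(\mu)}^2\lesssim\cW_1(P_n,P)\ \text{or}\ \|P_n-P\|_{\dot H^{-1}}^2\text{-type},
\]
available through Caffarelli regularity of $\BF_P$ (strong convexity of $u^*$; compare the results in \cite{delBarrioGonzalezLoubes.24}). With Hölder's inequality, $\int\|\cdot\|^{1+\eta}\le\|\cdot\|_{L^2}^{1+\eta}$. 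Using rates $\E\cW_2^2(P_n,P)$ of order $n^{-2/d}$ for $d\ge3$ and $\log n/n$ for $d=2$, the remainder is $O_\PP(r_n^{-(1+\eta)})$, which is $o(r_n^{-1})$ whenever $\eta>0$. Getting the right stability exponent, without losing a square root, is the delicate point. For $d=2,3$ with $\eta>(d-2)/2$, I must instead show $\sqrt n R_n\to0$, i.e.\ $n^{1/2-(1+\eta)/d}\to0$ for $d=3$ (which is exactly $\eta>1/2$) and a log-corrected version for $d=2$.

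\textbf{Step 4: conclusion.} Combining Steps 1–3, the tested difference equals the empirical mean of $g(Z_i)-\E g$ plus $o_\PP(r_n^{-1})$. This proves the first claim. The CLT for i.i.d.\ bounded $g(Z_i)$ then yields $\cN(0,\mathrm{Var}_\mu(g))$ under $\sqrt n$ scaling, which gives the second claim.
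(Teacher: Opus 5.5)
Your Steps 2--4 reproduce the paper's argument. That argument consists of the push-forward identity, which needs only $(\BQ_{P_n})_\#\mu=P_n$ and no Monge--Amp\`ere equation. It continues with a first-order Taylor expansion of $g\circ\BF_P$ along segments in the convex set $\Omega_P$, and the bound $\int\|\BQ_{P_n}-\BQ_P\|^{1+\eta}\dd\mu\le\|\BQ_{P_n}-\BQ_P\|_{L^2(\mu)}^{1+\eta}$. It ends with Jensen and Markov, then the CLT with Slutsky. The second claim never involves $\BI$, so these steps settle it, provided you use the correct stability bound (see below).

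The gap is Step 1. You treat it as a consistency check, but it is the entire content of the first claim. That display compares the tested OT increment with the tested IF average. So you must show, for every $x$ with $\BF_P(x)\in{\rm int}(\Omega_\mu)$ and every $g\in\cC^{1,\eta}(\Omega_\mu)$ with no conormal condition on $g$,
\[
\int_{\Omega_\mu}\inner{\nabla G_x}{[\nabla\BQ_P]^{-1}\nabla g}\dd\mu=g(\BF_P(x))-\int_{\Omega_\mu}g\dd\mu .
\]
Your sketch does not give this, for two reasons.

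\begin{itemize}
\item \Cref{Thm:InfluenceQuantiles-main} only provides $G_x\in L^q(\Omega_\mu)$ and local $\cC^2$ regularity away from the pole and the boundary. So even for Neumann $g$, your integration by parts needs a global bound $\nabla G_x\in L^r(\Omega_\mu)$, which you never establish. The same bound is needed just to make the integrated IF term meaningful.
\item For non-Neumann $g$ your reduction fails. The zero-flux Neumann problem with right-hand side $\operatorname{div}(f_\mu[\nabla\BQ_P]^{-1}\nabla g)$ violates the compatibility condition \eqref{eq: compatibility condition-general} unless $\int_{\partial\Omega_\mu}\inner{f_\mu[\nabla\BQ_P]^{-1}\nabla g}{n_\mu}\dd\cH^{d-1}=0$. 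The boundary term $\int_{\partial\Omega_\mu}G_x\inner{f_\mu[\nabla\BQ_P]^{-1}\nabla g}{n_\mu}\dd\cH^{d-1}$ that you propose to carry along is never evaluated, and evaluating it is equivalent to proving the identity above.
\end{itemize}

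The missing idea is that the conormal condition on $G_x$ is natural. Hence the variational identity holds against arbitrary $W^{1,r'}$ test functions, with no boundary term. The paper gets this from \Cref{lem:neumann-stampacchia}, in four moves. It replaces $\delta_{\BF_P(x)}$ by mollifiers, for which the variational identity against any $g\in W^{1,r'}$ is immediate. It uses Stampacchia truncation to get a uniform $W^{1,r}$ bound for $r<d/(d-1)$. It passes to the limit, and it identifies the limit with $G_x$ through \Cref{prop:uniqueness}. Alternatively, once $G_x\in W^{1,r}(\Omega_\mu)$ is known, you could approximate $g$ in $W^{1,r'}$, $r'>d$, by zero-flux functions and pass to the limit using Morrey's embedding.

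In Step 3 you also need to commit to the right stability estimate. A bound $\|\BQ_{P_n}-\BQ_P\|_{L^2(\mu)}^2\lesssim\cW_1(P_n,P)$ loses a square root. For $d\ge3$ it yields a remainder of order $n^{-(1+\eta)/(2d)}$, and $r_n\,n^{-(1+\eta)/(2d)}=n^{(1-\eta)/(2d)}$ does not vanish for any $\eta\in(0,1]$. What is needed, and what the paper uses, is the $\cW_2$-Lipschitz stability $\|\BQ_{P_n}-\BQ_P\|_{L^2(\mu)}\lesssim\cW_2(P_n,P)$ from \cite[Theorem~6]{Manole.2024.Aos}, already quoted in \eqref{claim:L2bound-W2}. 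Combine it with $\E\cW_2^2(P_n,P)\lesssim r_n^{-2}$, which holds for both $d=2$ and $d\ge3$ by the cited empirical rates. Your exponent bookkeeping is then correct: $\eta>0$ for the first claim, and $\eta>(d-2)/2$ for the $\sqrt n$ claim.
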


To further corroborate this conjecture, \cref{fig:AN} suggests that the hypothesized scaling is indeed correct when $d=3$. We vary the sample sizes $n \in \{250, 1000, 4000, 16000\}$
and observe that the distribution of the empirical OT median becomes increasingly
aligned with the linear statistic provided by the influence function 
in \Cref{conjecture:AL-form} as $n$ increases. This gives  numerical support to the conjecture that this asymptotic linear form
characterizes the empirical distribution of the OT quantile, which exhibits
exploding variance, heavy tails, and a convergence rate of $r_n^{-1}$.

The finite sample discrepancy in the variance may stem from the fact that our estimator $\BQ_{P_n}$ is computed by  solving the two-sample problem, i.e., the discrete OT problem for the empirical distributions $\mu_n$ and $P_n$, thereby introducing an additional finite-sample approximation term.

\begin{figure}
    \centering
        \includegraphics[width=1\linewidth]{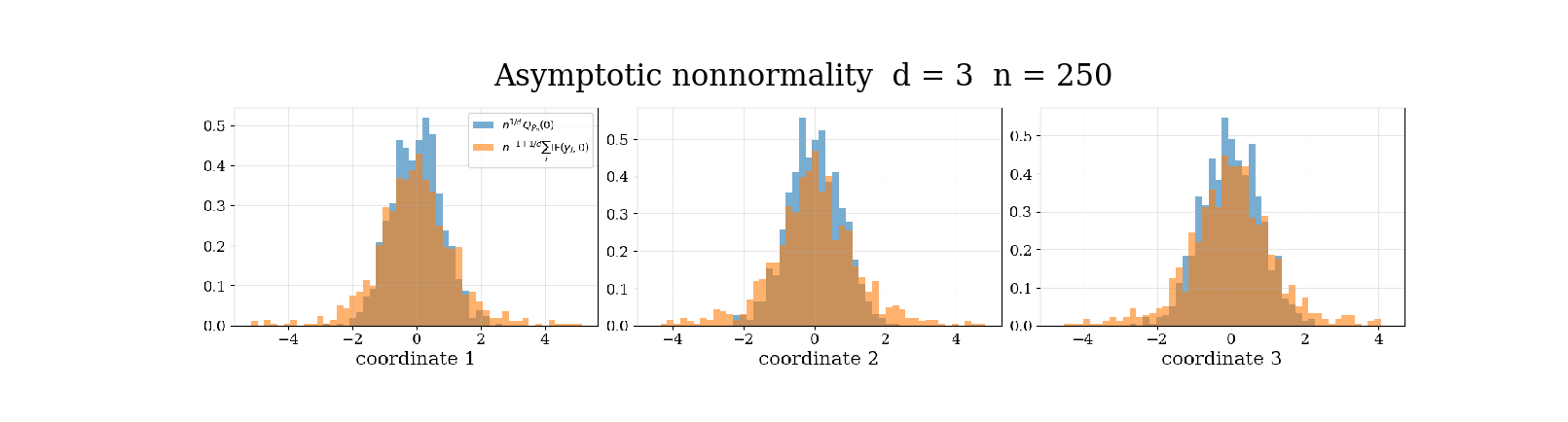}
\includegraphics[width=1\linewidth]{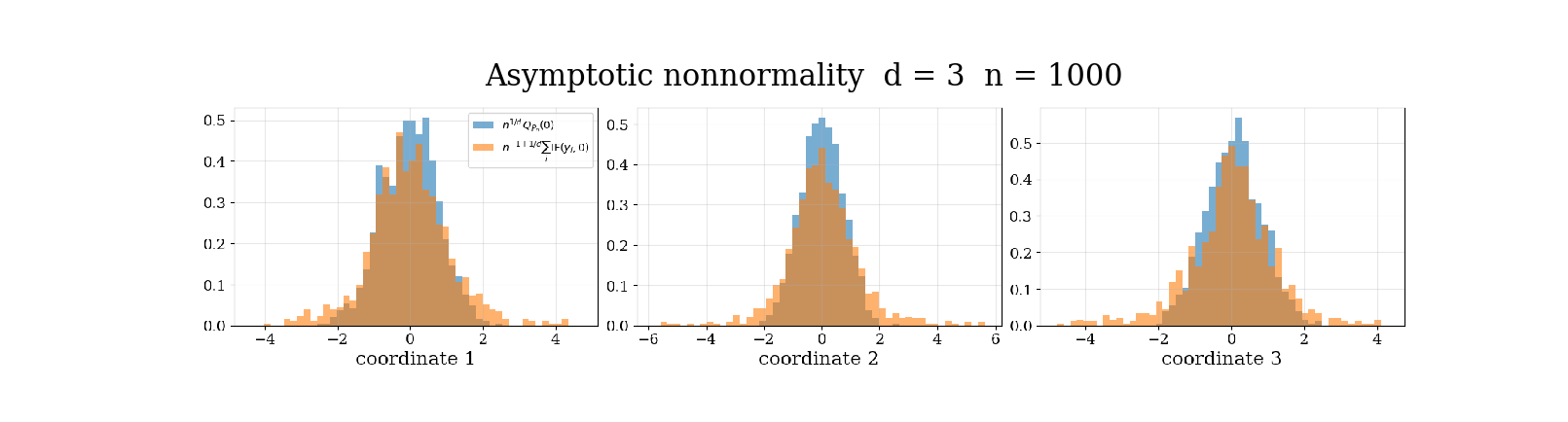}
    \includegraphics[width=1\linewidth]{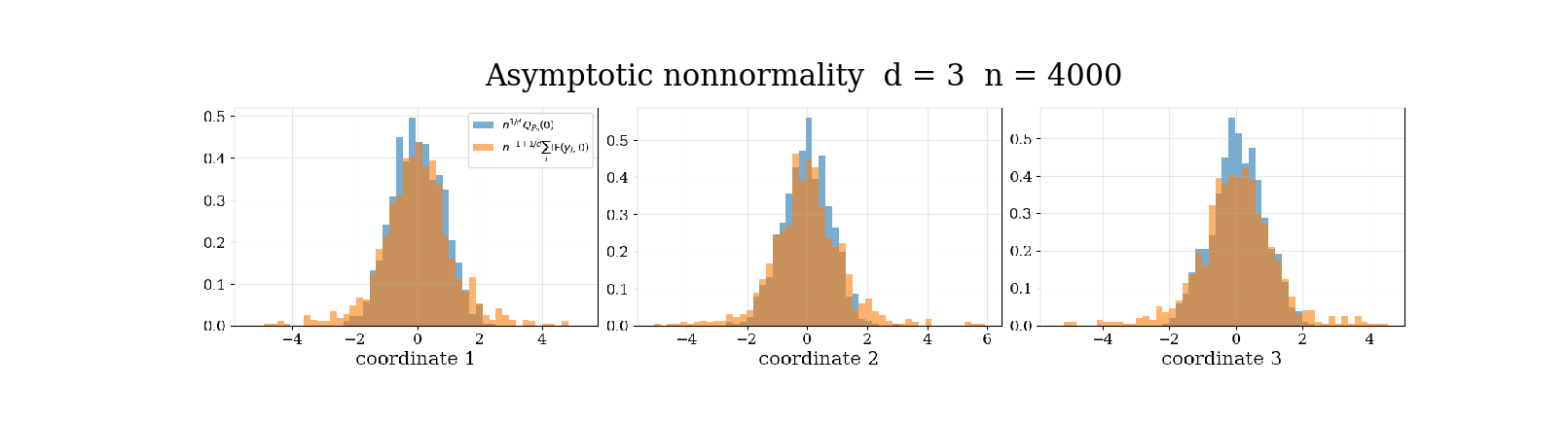}
    \includegraphics[width=1\linewidth]{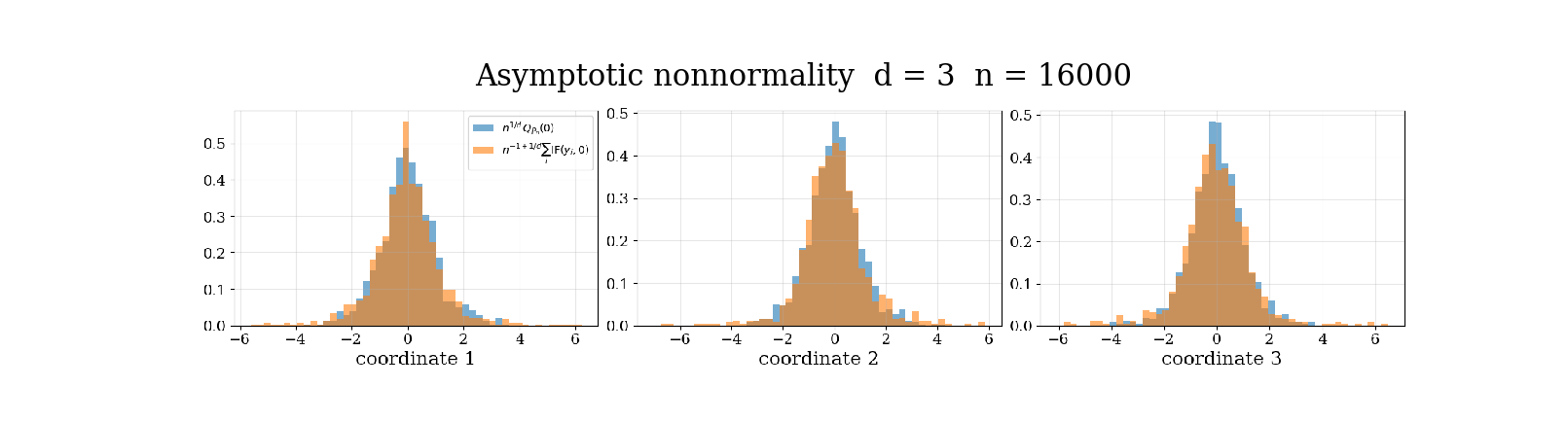}
\caption{Empirical distributions of $n^{1/d}\BQ_{P_n}(0)$ (blue) and the
         linear statistics $n^{-1+1/d}\sum_i \BI(X_i;\BQ_P(0))$ (orange),
         for $n \in \{250, 1000, 4000, 16000\}$ (rows 1--4), $d=3$, and $R=1000$
          replications.  The two distributions remain stable
         as $n$ grows and provide finite-sample evidence consistent with the scaling in~\cref{conjecture:AL-form}.
         While the OT median looks normal for small sample sizes, as $n$ increases
         the distribution of the empirical OT medians becomes aligned with the distribution of linear statistics, which show heavy tails. This  behavior
         provides empirical evidence for \cref{conjecture:AL-form}.}
    \label{fig:AN}
\end{figure}

To further assess the quality of the asymptotic approximation in \Cref{conjecture:AL-form}, we focus on the case where $n = 16000$, $d = 3$, and $R = 1000$  replications, and compare
the scaled empirical quantile $    A_n = r_n\bigl(\BQ_{P_n}(0) - \BQ_P(0)\bigr)$ against the linear form $    B_n = \frac{r_n}{n}\sum_{i=1}^n \BI(X_i;\BQ_P(0))$ from \Cref{conjecture:AL-form} using two types of tests.

\begin{enumerate}
    \item \textbf{Two-sample Kolmogorov--Smirnov test (KS).}
    For each coordinate $j \in \{1,2,3\}$, we apply the two-sample Kolmogorov--Smirnov test to the
    empirical distributions of $(A_n)_j$ and $(B_n)_j$ across the $R$ replications.
    Let $F_{(A_n)_j}$ and $F_{(B_n)_j}$ denote the corresponding empirical CDFs.
    The test statistic is
    \begin{equation}
        D_j = \sup_{t \in \mathbb{R}} \left| F_{(A_n)_j}(t) - F_{(B_n)_j}(t) \right|,
    \end{equation}
    which measures the largest pointwise discrepancy between the two empirical
    distributions. The null hypothesis $H_0: (A_n)_j \overset{d}{=} (B_n)_j$
    is rejected for large values of $D_j$.  A large $p$-value means that the test does not detect a coordinatewise difference between the two empirical distributions.

    \item \textbf{Jarque--Bera normality test (JB).}
    The Jarque--Bera test \citep{jarque1980efficient} tests whether a sample has skewness and excess kurtosis
    consistent with a Gaussian distribution. For a sample $x_1, \ldots, x_R$, the
    test statistic is
    \begin{equation}
        \mathrm{JB} = \frac{R}{6}\left( \hat{S}^2 + \frac{(\hat{K}-3)^2}{4} \right),
    \end{equation}
    where
    \begin{equation}
        \hat{S} = \frac{\frac{1}{R}\sum_{b=1}^R (x_b - \bar{x})^3}
                       {\left(\frac{1}{R}\sum_{b=1}^R (x_b - \bar{x})^2\right)^{3/2}},
        \qquad
        \hat{K} = \frac{\frac{1}{R}\sum_{b=1}^R (x_b - \bar{x})^4}
                       {\left(\frac{1}{R}\sum_{b=1}^R (x_b - \bar{x})^2\right)^{2}}
    \end{equation}
    are the sample skewness and kurtosis respectively. Under $H_0$: the series is
    normally distributed, $\mathrm{JB} \xrightarrow{d} \chi^2(2)$ as $R \to \infty$.
    We apply the test separately to $(A_n)_j$ and $(B_n)_j$ on each coordinate $j$.
\end{enumerate}

The per-coordinate diagnostics at $n = 16000$ are reported in \Cref{tab:AN_diag}.
     The coordinatewise tests fail to reject on all three coordinates ($p \in \{0.37, 0.466, 0.164\}$) at the sample size $n = 16000$. These results are consistent with %
 \Cref{conjecture:AL-form}. Both $A_n$ and $B_n$ individually
fail the JB test on all coordinates (all $p < 10^{-7}$), providing statistical evidence against Gaussian marginals. This is not a contradiction: \Cref{conjecture:AL-form} posits that $A_n$
and $B_n$ share the same limiting distribution, not that this limit is
Gaussian. The \emph{non-Gaussianity} of $B_n = \frac{r_n}{n}\sum_i \BI(X_i;\BQ_P(0))$
at $n=16000$ reflects the slow convergence for the influence function
terms, whose tails are heavy near the origin of $\BB(0,1)$ by \Cref{thm:Unbounded-IF}.

\begin{table}[h]
\centering
\begin{tabular}{cccc}
\toprule
$j$ & KS $p$-val ($A_n$ vs $B_n$) & JB $p$-val ($A_n$) & JB $p$-val ($B_n$) \\
\midrule
1 & $0.370$ & $7.64 \times 10^{-9}$  & $\approx 0$ \\
2 & $0.466$ & $1.27 \times 10^{-8}$  & $\approx 0$ \\
3 & $0.164$ & $1.09 \times 10^{-13}$ & $\approx 0$ \\
\bottomrule
\end{tabular}
\caption{Per-coordinate diagnostic tests at $n=16000$, $d=3$, $R=1000$ 
         replications. Here $A_n = r_n(\BQ_{P_n}(0)-\BQ_P(0))$ is the scaled empirical
         quantile and $B_n = r_n n^{-1}\sum_i \BI(X_i;\BQ_P(0))$ is the scaled
         linear form from \Cref{conjecture:AL-form}, with $r_n = n^{1/d}$ for $d=3$.
         KS: two-sample Kolmogorov--Smirnov test ($H_0$: $A_n \overset{d}{=} B_n$
         coordinatewise). JB: Jarque--Bera normality test ($H_0$: Gaussian marginal),
         applied separately to $A_n$ and $B_n$. The KS test fails to reject $H_0$
         on all coordinates, which is consistent with \Cref{conjecture:AL-form}; moreover, both
series exhibit non-Gaussian marginals, as suggested by the heavy-tailed
influence function established in \Cref{thm:Unbounded-IF}.}
\label{tab:AN_diag}
\end{table}

\section{Proofs}\label{sec:L-2and-Example-and-Shape}\label{sec:proofs}
This section is organized as follows. In~\Cref{Sect:L-2Estimates}, we establish the $L^2$ control of $\BQ_{P_t} - \BQ_P$ for sufficiently small $t$.  In  \Cref{sec:Lq-estimate},  we apply a 
linearization argument in~\Cref{lemma:test-linearization} to show that the sequence $\{(\BQ_{P_t} - \BQ_P)/t\}_{t\to 0}$ is uniformly bounded in $L^q$ for $q\in \left(1,\frac{d(d+2)}{d^2+2d-4}\right)$, which also characterizes the limit. 

 The $L^q$ estimate is improved to a local $L^\infty$ bound in \Cref{sec:holder} using the local maximum principle, which implies a local $\cC^{2,\alpha}$ bound via Schauder's estimates. 
 
 In~\Cref{sec:proof-main},  we combine our results to prove~\Cref{Thm:InfluenceQuantiles-main}. The proof of~\Cref{thm:Unbounded-IF} builds on the PDE characterization of the influence function and follows from a localization  over the regions that are away from the singularity point $z_0 =\BF_P(x)$. Finally, in \Cref{sec:proof-shape}, we present \Cref{prop:shape} and its proof using John's lemma, which may be of independent interest.

\subsection{The $L^2$ Estimates}\label{Sect:L-2Estimates}
In this section, we  prove an $L^2$ bound of the optimal transport map. As we illustrate in the roadmap in \Cref{fig:dependency-diagram}, the $L^2$ bound plays an instrumental role in all subsequent results. 
\begin{theorem}\label{thm:L2-estimate}
 Let  $P$ and $\mu$ be regular. Let $P_t = (1-t)P+ t \delta_{x_0}$ for some $x_0 \in \Omega_P$.
   For sufficiently small $t$, it follows that 
\begin{equation}
    \label{claim:L2bound}
   \left\|\BQ_{P_t}-\BQ_{P} \right\|_{L^2(\mu)}^2 \lesssim  \begin{cases}
      t^{1+\frac{2}{d}} & {\rm if} \ d\geq 3,\\
t^{2}{|\log(t)|}     & {\rm if}\ d=2.
  \end{cases} 
\end{equation}
\end{theorem}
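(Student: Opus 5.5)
The plan is a duality (stability) argument that reduces the $L^2$ error to an oscillation term, followed by a bootstrap that exploits the monotonicity of gradients of convex potentials.

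\textbf{Step 1: reduction to an oscillation term.} Let $\varphi,\varphi_t$ be the Brenier potentials of $\BQ_P,\BQ_{P_t}$ on the $\mu$-side, and $\psi=\varphi^*$, $\psi_t=\varphi_t^*$. By \Cref{thm:Cafarrelli}, $\nabla^2\varphi\preceq C\,{\rm I}_d$, so $\psi$ is $C^{-1}$-strongly convex. This gives
\[
\varphi(z)+\psi(x)-\langle z,x\rangle\ \ge\ \tfrac{1}{2C}\|x-\BQ_P(z)\|^2 .
\]
Integrate this against the optimal coupling $({\rm id}\times\BQ_{P_t})_\#\mu$. Then use the equality $\int\langle z,x\rangle\,\rd\pi_t=\int\varphi_t\,\rd\mu+\int\psi_t\,\rd P_t$ and the optimality of $(\varphi,\psi)$ in the dual problem for $(\mu,P)$. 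With $h:=\psi-\psi_t$ and $E_t:=\|\BQ_{P_t}-\BQ_P\|^2_{L^2(\mu)}$, the result is
\[
E_t\ \le\ 2C\!\int h\,\rd(P_t-P)\ =\ 2Ct\Big(h(x_0)-\int h\,\rd P\Big).
\]
So everything reduces to bounding the oscillation of $h$ between the point $x_0$ and its $P$-average.

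\textbf{Step 2: controlling the oscillation by $E_t$.} First I record $\|\nabla h\|_{L^2(P)}^2\lesssim E_t$. This follows from $\nabla h=\BF_P-\nabla\psi_t$, the Lipschitz bound on $\BF_P$ from \Cref{thm:Cafarrelli}, and the comparability of $P$ and $P_t$ off the atom. Next I use a potential estimate on the convex set $\Omega_P$:
\[
|h(x_0)-\textstyle\int h\,\rd P|\lesssim\int_{\Omega_P}\frac{|\nabla h(y)|}{\|x_0-y\|^{d-1}}\,\rd y .
\]
I split this integral at radius $r$. The near part is bounded by $r\,\|\nabla h\|_{L^\infty}$. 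The far part is bounded by Cauchy--Schwarz using $\int_{\|y\|>r}\|y\|^{2-2d}\,\rd y\asymp r^{2-d}$ for $d\ge3$ and $\asymp|\log r|$ for $d=2$. With $\delta_t:=\|\nabla h\|_{L^\infty(\Omega_P)}$ this gives
\[
E_t\ \lesssim\ t\,r\,\delta_t+t\,E_t^{1/2}\,r^{1-d/2}\qquad(d\ge 3),
\]
and the same with $|\log r|^{1/2}$ in place of $r^{1-d/2}$ when $d=2$. The crude bound $\delta_t\lesssim 1$, which holds since both gradients map into bounded sets, only yields $E_t\lesssim t^{d/(d-1)}$. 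That is not sharp, so a better bound on $\delta_t$ is needed.

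\textbf{Step 3: $L^\infty$ control of $\nabla h$ and the bootstrap.} This is the main obstacle. I would prove an interpolation inequality
\[
\|\nabla\psi_t-\BF_P\|_{L^\infty(\Omega_P)}\ \lesssim\ \|\nabla\psi_t-\BF_P\|_{L^2(P)}^{2/(d+2)}\ \lesssim\ E_t^{1/(d+2)} .
\]
The argument uses monotonicity of $\nabla\psi_t$ together with the Lipschitz and strong-monotonicity bounds on $\BF_P$ from \Cref{thm:Cafarrelli}. Suppose $|\langle\nabla\psi_t(x)-\BF_P(x),e\rangle|=\delta$ at some $x$ and direction $e$. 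Monotonicity then forces $|\nabla\psi_t-\BF_P|\gtrsim\delta$ on a truncated cone of height $\sim\delta$ at $x$, which has volume $\sim\delta^d$. This gives $\delta^{d+2}\lesssim E_t$. The truncated cone lies inside the convex set $\Omega_P$ up to a constant, and some care is needed at $\partial\Omega_P$, where the $\cC^{2,1}$ convexity of the support is used.

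Plugging $\delta_t\lesssim E_t^{1/(d+2)}$ into Step 2 and choosing $r=t^{1/d}$ gives a self-improving inequality, since $E_t$ appears on both sides with powers less than $1$. The scaling is consistent at $E_t=t^{1+2/d}$: then $\delta_t\asymp t^{1/d}$ and both terms are of order $t^{1+2/d}$. Concretely, I would absorb terms via Young's inequality and iterate from the a priori bound $E_t\lesssim t^{d/(d-1)}$; the exponents increase monotonically to the fixed point $1+2/d$.

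In $d=2$, the same scheme with the logarithmic factor and $r=\sqrt{t}$ gives $E_t\lesssim t^2|\log t|$. The smallness condition on $t$ enters only to make the iteration start and to keep $r<{\rm diam}(\Omega_P)$. As a sanity check, the explicit computation in \Cref{prop:Example-uniform} shows these rates are attained.
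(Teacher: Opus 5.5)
Your Steps 1 and 2 are correct, and your route differs from the paper's. The paper quotes the stability bound $\|\BQ_{P_t}-\BQ_P\|_{L^2(\mu)}^2\lesssim\cW_2^2(P_t,P)$. It then estimates $\cW_2^2(P_t,P)$ by pulling back to the uniform law on $\BB_1(0)$, sending a cone of mass $t$ with vertex at the image of $x_0$ onto the atom, and bounding the remainder through $H^{-1}$ and Sobolev/Trudinger duality. You instead work with the dual potentials directly; your far-field Cauchy--Schwarz term is the dual counterpart of that $H^{-1}$ bound. Given your interpolation inequality, the final algebra is also right.

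The gap is in Step 3. The cone argument needs the cone with axis $e=(\nabla\psi_t(x)-\BF_P(x))/\delta$ and height $\sim\delta$ to have a fixed fraction of its volume inside $\Omega_P$. Monotonicity gives no information along $-e$. If $e$ points outward at a point on $\partial\Omega_P$, or within distance $o(\delta)$ of it, the cone lies essentially outside $\Omega_P$. Smoothness and convexity of the domain do not rescue this. On $\Omega=\BB_1(0)$ take $v\equiv0$ and $u=\nabla f$ with $f(y)=\delta\max\{0,y_d-1+\eta\}$. Then $u$ is monotone and $\|u-v\|_{L^\infty(\Omega)}=\delta$, yet $\|u-v\|_{L^2(\Omega)}^2\lesssim\delta^2\eta^{(d+1)/2}\to0$ as $\eta\to0$. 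So your interpolation inequality is justified only at points a fixed distance from $\partial\Omega_P$. That suffices for $x_0\in\operatorname{int}(\Omega_P)$, since the near-field term only needs $\|\nabla h\|_{L^\infty(\BB_r(x_0))}$ with $r\to0$. But the theorem allows $x_0\in\partial\Omega_P$ (the closed support), and there Step 3 is unproved as written.

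To close the gap you need more than the geometry of $\Omega_P$. One option is to combine $\nabla\psi_t(\Omega_P)\subset\overline{\Omega_\mu}$ with the obliqueness $\langle n_\mu\circ\BF_P,n_P\rangle\geq c_0>0$ on $\partial\Omega_P$, which rules out outward deviations near the boundary.

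It is simpler to drop $L^\infty$ control altogether. Step 1 only needs an upper bound on $h(x_0)-\int h\,\rd P$.
\begin{itemize}
\item Use $\nabla^2\psi=\nabla\BF_P\preceq\beta\,\mathrm{I}_d$ on the convex set $\Omega_P$ together with convexity of $\psi_t$. This gives $h(x_0)-h(y)\leq\langle\nabla h(y),x_0-y\rangle+\frac{\beta}{2}\|x_0-y\|^2$ for a.e.\ $y\in\Omega_P$.
\item Average this over a cone $S\subset\Omega_P$ with vertex $x_0$, fixed aperture and radius $r$. Such a cone exists uniformly in $x_0$, boundary points included. Writing $h_S$ for the Lebesgue average of $h$ over $S$, you get $h(x_0)-h_S\lesssim r^{1-d/2}E_t^{1/2}+r^2$.
\item Your potential estimate, averaged over $S$, gives $|h_S-\int h\,\rd P|\lesssim r^{1-d/2}E_t^{1/2}$, with $|\log r|^{1/2}$ in place of $r^{1-d/2}$ when $d=2$.
\item Hence $E_t\lesssim t^2r^{2-d}+tr^2$, or $t^2|\log r|+tr^2$ when $d=2$.
\item Taking $r=t^{1/d}$ gives the claim uniformly in $x_0$, with no bootstrap.
\end{itemize}
This is exactly the dual form of the paper's cone construction.
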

The result establishes an $L^2$ stability bound for the optimal transport map under Huber contamination. In $L^2(\mu)$, the OT map is stable at a rate slightly faster than linear: there is no first-order $L^2$ effect, and the squared $L^2(\mu)$ displacement scales strictly better than $t$ as $t\to 0$.

The rest of the section is devoted to the proof of \Cref{thm:L2-estimate}. When $P, \mu$ are regular, \cite[Theorem~6]{Manole.2024.Aos} shows that
\begin{equation}
    \label{claim:L2bound-W2}
   \left\|\BQ_{P_t}-\BQ_{P} \right\|_{L^2(\mu)}^2 \lesssim  \mathcal{W}_2^2(P_t,P),\quad \forall t>0,
\end{equation}
where the implicit constant depends only on the regularity of $\mu$ and $P$. 

To show  \Cref{thm:L2-estimate}, we require \eqref{claim:L2bound-W2} and the following lemma. 
As a key ingredient, we would like to extend \Cref{prop:Example-uniform} to any point of perturbation in $\overline{\BB}_1(0)$, not necessarily only at the origin.

\begin{lemma}\label{lemma:W2-estimate}
For sufficiently small  $t$, the following holds
  $$ \mathcal{W}_2^2(P_t,P)\lesssim \begin{cases}
      t^{1+\frac{2}{d}} & {\rm if} \ d\geq 3,\\
t^{2}{|\log(t)|}     & {\rm if}\ d=2.
  \end{cases}  $$
\end{lemma}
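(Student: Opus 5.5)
We bound $\mathcal{W}_2^2(P_t,P)$ from above by building an explicit coupling between $P$ and $P_t=(1-t)P+t\delta_{x_0}$. The natural choice leaves most of $P$ in place and transports a mass $t$ of $P$, taken from near $x_0$, onto the atom at $x_0$. This is the same mechanism as in \Cref{prop:Example-uniform}(ii), where the ball $\BB_{t^{1/d}}(0)$ collapses to the origin. The difficulty is that the collapse has to be arranged compatibly with the remaining mass. We therefore construct the coupling in two steps.

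\textbf{Step 1 (localization).} Since $f_P\ge\lambda$ on the convex body $\Omega_P$ and $x_0\in\Omega_P$, there exist $c>0$ and $r_0>0$ such that $P(\BB_r(x_0))\ge c\,r^d$ for $r\le r_0$. This holds even when $x_0\in\partial\Omega_P$, because a convex $\cC^{2,1}$ domain contains a cone with vertex at $x_0$. Choose $\rho_t\asymp t^{1/d}$ so that $P(\BB_{\rho_t}(x_0))\ge 2t$, say.

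\textbf{Step 2 (radial transport plan).} Set $\nu=P|_{\BB_{\rho_t}(x_0)}$ and write $\nu=\nu_1+\nu_2$, where $\nu_1=\frac{t}{\nu(\BB_{\rho_t})}\nu$. The plan sends $\nu_1$ to $t\delta_{x_0}$ and sends $t\,P$ to the remaining mass.

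\emph{Naive version.} Sending $\nu_1$ to $\delta_{x_0}$ costs at most $t\rho_t^2\asymp t^{1+2/d}$. However, the removed mass $tP$ must then be compensated, and doing this by a transport from $P$ to $(1-t)P+t\nu_1/t$ generally costs order $t\cdot O(1)$. That is too crude. A cleaner formulation is
\[
\mathcal W_2^2(P_t,P)\le \mathcal W_2^2\big((1-t)P+t\delta_{x_0},\,(1-t)P+t\tilde\nu\big)\text{-type splitting},
\]
which still leaves the problem of moving $t\tilde\nu$ back to $tP$ cheaply.

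\emph{Preferred version: dyadic shells.} Use a radial dyadic scheme around $x_0$, mimicking \Cref{prop:Example-uniform}(i) for the ball. Let $A_k=\{x\in\Omega_P:2^{k}\rho_t\le\|x-x_0\|<2^{k+1}\rho_t\}$ for $k=0,\dots,K$, with $2^K\rho_t\asymp 1$. The mass $t$ that must flow inward through the sphere of radius $s$ is approximately $t\cdot P(\{\|x-x_0\|\ge s\})\le t$. Moving it radially by a distance $2^k\rho_t$ across shell $A_k$, while spreading it over the shell's volume $\asymp(2^k\rho_t)^d$, costs
\[
\frac{t^2}{(2^k\rho_t)^{d}}\,(2^k\rho_t)^{2}\cdot(2^k\rho_t)^{d}\cdot\frac{1}{(2^k\rho_t)^{d}}
\]
per shell; equivalently, use the Benamou–Brenier/negative Sobolev bound. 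The most transparent route is the estimate $\mathcal W_2^2(P,P_t)\lesssim \|P-P_t\|_{\dot H^{-1}(P)}^2$, valid when both densities are bounded below. Since $P_t$ has an atom this does not apply directly, so we mollify the atom. Put $\tilde P_t=(1-t)P+t\,\mathcal U(\BB_{\rho_t}(x_0)\cap\Omega_P)$. Then
\[
\mathcal W_2(P_t,P)\le \mathcal W_2(P_t,\tilde P_t)+\mathcal W_2(\tilde P_t,P).
\]
The first term is at most $(t\rho_t^2)^{1/2}\asymp t^{1/2+1/d}$. For the second term, $\tilde P_t$ has density bounded below by $(1-t)\lambda$, so the Peyre inequality $\mathcal W_2^2\lesssim\frac1\lambda\|\cdot\|_{\dot H^{-1}}^2$ applies. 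Hence it suffices to solve a Neumann problem $-\Delta u=t\big(\mathcal U(\BB_{\rho_t}\cap\Omega_P)-f_P\big)$ on $\Omega_P$ and bound $t^2\int|\nabla u|^2$.

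\textbf{Step 3 ($\dot H^{-1}$ computation).} This is the main step. The Neumann Green function satisfies $|\nabla_x\Gamma(x,y)|\lesssim|x-y|^{1-d}$ on the convex $\cC^{2,1}$ domain. Convolving with the normalized uniform measure on a ball of radius $\rho_t$ gives $|\nabla u(x)|\lesssim \min(\rho_t^{1-d},|x-x_0|^{1-d})$ plus a bounded contribution from $f_P$. Integrating,
\[
\int|\nabla u|^2\lesssim \rho_t^{2-2d}\rho_t^d+\int_{\rho_t}^{1}s^{2-2d}s^{d-1}\dd s\asymp\begin{cases}\rho_t^{2-d}, & d\ge3,\\ |\log\rho_t|, & d=2.\end{cases}
\]
Multiplying by $t^2$ with $\rho_t\asymp t^{1/d}$ yields $t^{2-1+2/d}=t^{1+2/d}$ for $d\ge 3$ and $t^2|\log t|$ for $d=2$, matching the first term. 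The main obstacle is justifying the Green-function gradient bound uniformly up to the boundary when $x_0\in\partial\Omega_P$. If this proves delicate, the fallback is to reflect or extend locally using the $\cC^{2,1}$ boundary chart, or to avoid the Green function altogether by building the flux explicitly through the dyadic shells $A_k$ intersected with a cone at $x_0$.
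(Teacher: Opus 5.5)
Your final (``preferred'') argument is correct, but it takes a different route from the paper.

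\textbf{The paper's route.} The paper never works on $\Omega_P$ directly. It transfers the problem to $\nu=\mathcal U(\BB_1(0))$ through the Lipschitz inverse of the Brenier map $T$ from $P$ to $\nu$, which is Lipschitz by Caffarelli's theorem. It then carves out a cone $\BV$ with vertex $T(x_0)$ and $\nu(\BV)=t$, and collapses it to the vertex at cost $\lesssim t^{1+2/d}$. Finally it bounds the transport of $\nu|_{\BB_1(0)\setminus\BV}/(1-t)$ back to $\nu$ by Peyre's inequality $\cW_2\lesssim\|\cdot\|_{\dot H^{-1}}$, combined with \emph{duality against the Sobolev or Trudinger embedding}. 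This gives $\|\mathbf 1_{\BV}\|_{L^{2d/(d+2)}}\asymp t^{(d+2)/(2d)}$ for $d\ge3$, and an Orlicz-dual norm $\lesssim t\sqrt{|\log t|}$ for $d=2$.

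\textbf{Your route.} You stay on $\Omega_P$ and smooth the atom into $\mathcal U(\BB_{\rho_t}(x_0)\cap\Omega_P)$. The triangle-inequality step is clean and replaces the cone bookkeeping. You then estimate the $\dot H^{-1}$ norm through pointwise Neumann Green-function bounds. This frees you from Caffarelli regularity for the transfer map.

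\textbf{What it costs.} The price is exactly the input you flag yourself: the bound $|\nabla_x\Gamma(x,y)|\lesssim|x-y|^{1-d}$, uniformly up to $\partial\Omega_P$. It does hold on domains as regular as $\Omega_P$, but it is a genuine theorem that would need a citation.

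\textbf{How to avoid it.} The Green-function bound is also unnecessary. $\Omega_P$ is bounded and convex, hence Lipschitz, so Sobolev--Poincar\'e holds on it. Subtracting the mean of the test function and applying H\"older gives
\[
\|\tilde P_t-P\|_{\dot H^{-1}(\Omega_P)}\lesssim t\,\bigl\|\rho_t^{-d}\mathbf 1_{\BB_{\rho_t}(x_0)\cap\Omega_P}\bigr\|_{L^{2d/(d+2)}(\Omega_P)}+t\lesssim t\,\rho_t^{(2-d)/2},\qquad d\ge3.
\]
For $d=2$, Trudinger's inequality and the Orlicz dual give $\lesssim t\sqrt{|\log t|}$, exactly as in the paper. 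This reproduces your Step~3 rates with no boundary analysis, including when $x_0\in\partial\Omega_P$. With that substitution your route is arguably more elementary than the paper's.

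\textbf{Cleanup.} Drop the ``naive version'' and ``dyadic shells'' paragraphs. The displayed per-shell cost is not a meaningful expression, and nothing in the final argument uses them.
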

\begin{proof}[Proof of \Cref{lemma:W2-estimate}]
    Let $T$ be the OT map from $P$ to the uniform  probability measure $\nu$ on the unit ball, $\dd\nu= (\cL_d(\BB_1(0)))^{-1}{\bf 1}_{\BB_1(0)}\dd z$. 
    
    Write $z_0=T(x_0)\in \overline{\BB}_1(0)$.  From  the theory of Caffarelli (see \cite{Caffarelli.96}), we know that $T^{-1}$ is Lipschitz with constant $L$.  If a transport map $R_t$ from $\nu$ to $(1-t)\nu + t \delta_{z_0}$ satisfies
\begin{equation} \label{cond:Rt}
        \int_{\BB_1(0)}\|R_t(z)-z\|^2\dd z\lesssim \begin{cases}
      t^{1+\frac{2}{d}} & {\rm if} \ d\geq 3,\\
t^{2}{|\log(t)|}    & {\rm if}\ d=2,
  \end{cases}
\end{equation}
we obtain the desired result
\begin{align*}
     \mathcal{W}_2^2(P, (1-t)P+ t \delta_{x_0})&\leq \int_{\Omega_P} \|T^{-1}(R_t (T (x))) -x\|^2\,P(\rd x )\\
     &= \int_{\BB_1(0)} \|T^{-1}(R_t (z)) -T^{-1}(z)\|^2 \dd z \leq L^2\int_{\BB_1(0)}\|R_t(z)-z\|^2\dd z\,.
\end{align*}

We now proceed to  construct a map $R_t$ that satisfies~\eqref{cond:Rt}. Since $\mathbb{B}_1(0)$ has  Lipschitz boundary, it satisfies a uniform interior cone condition. Hence,   for $t$ sufficiently small, there exists a bounded closed convex cone $\mathbb{V}$ with angle $\theta>0$, radius $r(t)>0$ and vertex $z_0$ such that $\mathbb{V}\subset \overline{\BB}_1(0)$ and $\nu(\BV) = t$.

As $\nu$ is uniform over $\BB_1(0)$, $\nu(\BV)=t$ implies that  $r(t) \asymp t^{\frac{1}{d}}$.  Moreover, since both measures  have total mass $ 1-t $, there exists an OT map $T_t$ from $\nu\indic{\BB_1(0)\setminus \BV}$ to $(1-t)\nu$. 

Define the Borel map $$R_t(z) =\begin{cases}
        z_0 & {\rm if}\ z\in \mathbb{V},\\
        T_t(z) & {\rm if}\ z\in  \BB_1(0)\setminus \mathbb{V}.
    \end{cases}$$
Then, $R_t$ pushes forward $\nu$ to $(1-t)\nu+ t \delta_{z_0}$. Furthermore,
\begin{equation}\label{eq:Rt}
    \begin{aligned}
    \int_{\BB_1(0)}\|R_t(z)-z\|^2\dd z  =& \int_{\BV} \|z_0-z\|^2 \dd z + \int_{\BB_1(0)\setminus \mathbb{V}} \|T_t(z) - z\|^2\dd z\\
    = & \int_{\BV} \|z_0-z\|^2 \dd z + (1-t) \cW_2^2\left(\frac{\nu}{1-t}\indic{\BB_1(0)\setminus \BV}, \nu\right).
\end{aligned}
\end{equation}
Since $\BV$ has radius $r(t)\asymp t^{\frac{1}{d}}$ and angle $\theta$, we know that 
\begin{equation}\label{eq:V}
        \int_{\BV} \|z_0-z\|^2 \dd z \lesssim \int_{0}^{r(t)} r^{d+1} \dd r \lesssim t^{1+\frac{2}{d}}.
\end{equation}
 Denote by $\nu_t = \frac{\nu}{1-t}\indic{\BB_1(0)\setminus \BV}$. By \cite[Theorem~2.1]{Peyre-ESAIM}, the $\mathcal{W}^2_2$ distance is dominated by $\|\cdot\|_{H^{-1}(\BB_1(0))}$. More precisely, 
\[
\cW_2\left(\nu_t, \nu\right) \lesssim  \|\nu_t-\nu\|_{H^{-1}(\BB_1(0))} :=\sup_{\|\nabla h\|_{L^2(\BB_1(0))}\leq 1} \left|\int h \dd(\nu_t -\nu) \right|\,.
\]
Now we consider the cases $d=2$ and $d\geq 3$ separately. 

{\it Case $d=2$.} We  proceed by analyzing the relation between $\|\cdot\|_{H^{-1}}$ and the Orlicz spaces. Fix $\phi(t) = e^{t^2}-1$. By Trudinger's theorem \cite[Theorem~8.27]{AdamsFournier2003Sobolev}, we have the inequality 
\begin{equation}\label{eq:h_L_varphi}
    \|h\|_{L_\phi(\BB_1(0))} \lesssim  \| h\|_{W^{1,2}(\BB_1(0))}. 
\end{equation}
By the Poincaré inequality \citep[p.~291]{Evans-PDE}, this implies
\begin{equation*}
     \inf_{m\in \R}\|h-m\|_{L_\phi (\BB_1(0))} \lesssim \inf_{m\in \R} \| h-m\|_{W^{1,2}(\BB_1(0))}\lesssim \|\nabla h\|_{L^2(\BB_1(0))},
\end{equation*}
where $\|\cdot\|_{L_\phi(\BB_1(0))}$ is the Orlicz norm, 
$ \|g\|_{L_{\phi}(\BB_1(0))}  := \inf\left\{ k > 0 : \int_{\BB_1(0)} \phi\left(|g(z)|/k \right) \dd z \le 1 \right\}.
$
Recall that $f_\nu$ denotes the Lebesgue density of $\nu$. It follows that
\begin{align*}
  \|\nu_t - \nu \|_{H^{-1}}&=  \sup_{\|\nabla h\|_{L^2(\BB_1(0))}
  \leq 1 } \int_{\BB_1(0)} h (f_{\nu_t}-f_{\nu})\dd z\\&\lesssim \sup_{\inf_{m\in \R}\| h-m\|_{L_\phi(\BB_1(0))} \leq 1 } \int_{\BB_1(0)} h(z) (f_{\nu_t}(z)-f_{\nu}(z))\dd z\\
    &= \sup_{\| h\|_{L_\phi(\BB_1(0))} \leq 1 } \int_{\BB_1(0)} h(z) (f_{\nu_t}(z)-f_{\nu}(z))\dd z,%
\end{align*}
where the last equality follows from the fact that $\int_{\BB_1(0)} (f_{\nu_t}-f_{\nu})\dd z=0$. 

Let $ \phi^*(u) = \sup_{v\in \R} \left\{ uv - \phi(v) \right\}$ be the convex conjugate of $\phi$. 
From \cite[Section~15.3]{Rubshtein2016Orliczdual}, we deduce 
\begin{equation*}
    \sup_{\| h\|_{L_\phi (\BB_1(0))} \leq 1 } \int_{\BB_1(0)} h (f_{\nu_t}-f_{\nu})\dd z\leq 2\|f_{\nu_t}-f_{\nu}\|_{L_{\phi^*}(\BB_1(0))}. 
\end{equation*}

Therefore, it remains to show that 
$$  \|f_{\nu_t}-f_{\nu}\|_{L_{\phi^*}(\BB_1(0))} \lesssim t|\log(t)|^{\frac{1}{2}}. $$
Note that $$L^\infty(\BB_1(0))\ni f_{\nu_t} - f_\nu = \begin{cases}
    -\frac{1}{\cL_d(\BB_1(0))},&\quad {\rm in} \,\, \BV\\
    \frac{t}{(1-t)\cL_d(\BB_1(0))},&\quad {\rm in} \,\, \BB_1(0)\setminus \BV
\end{cases},
$$
after applying triangle inequality, it suffices to prove 
\begin{equation}
    \label{eq:claim-d=2}
     \|{\bf 1}_{\mathbb{V}}\|_{L_{\phi^*}(\BB_1(0))} \lesssim t|\log(t)|^{\frac{1}{2}} .
\end{equation}
By definition 
$$ \|{\bf 1}_{\mathbb{V}}\|_{L_{\phi^*}(\BB_1(0))}  = \inf\left\{ k > 0 : \int_{\BB_1(0)} \phi^*\left(\frac{{\bf 1}_{\mathbb{V}}}{k}  \right) \dd z \le 1 \right\}.
$$
As $\cL_d(\BV) =  t \cL_d(\BB_1(0))$, we have
\begin{align*}
       \|{\bf 1}_{\mathbb{V}}\|_{L_{\phi^*}(\BB_1(0))} & =  \inf\left\{ k > 0 :  \cL_d(\BV) \phi^*\left(\frac{1}{k} \right) \le 1 \right\}= \inf\left\{ k > 0 :  \phi^*\left(\frac{1}{k} \right) \le \frac{1 }{t \cL_d(\BB_1(0))}\right\}.
\end{align*}
On the other hand, since $ \phi'(v)=0$ if and only if $v=0$, it follows that $\phi^*(0)= \phi(0)=0$.  Furthermore, since $\phi^*$ is convex, satisfies $\phi^*(0)=0$, and is
even, it is non-decreasing on $[0,\infty)$. Thus, for all sufficiently small $t>0$ such that 
$$\frac{1 }{t \cL_d(\BB_1(0))} \geq \phi^*(1+e^{1/4}),$$
we have
\begin{equation}
    \label{eq:Orlitz-bound-1}
     \|{\bf 1}_{\mathbb{V}}\|_{L_{\phi^*}(\BB_1(0))} =\inf\left\{ k \in \left(0, \frac{1}{1+e^{\frac{1}{4}}}\right] :  \phi^*\left(\frac{1}{k} \right) \le \frac{1 }{t \cL_d(\BB_1(0))}\right\}.
\end{equation}

Let us give an upper bound on
$ \phi^*(u) $ for $u\geq 1+e^{1/4}$. Since $\phi$ is strictly convex and differentiable, $\phi^*$  is also strictly convex and differentiable. Hence,  $ \argmax_{v\in \R} \left\{ uv - \phi(v) \right\} $ is the singleton $\{v_u\}$ where  
$ 2 v_u e^{v_u^2} = u$. Furthermore, as $  v\mapsto 2 v e^{v^2} $ is monotone increasing and $u\geq  e^{1/4}$, it must follow that $v_u\geq \frac{1}{2}$. Thus, we derive 
$ e^{v_u^2} = u/(2v_u) \leq u $, which implies that 
$ v_u \leq \sqrt{\log{(u)}} $. As a consequence, it holds that 
$$ \phi^*(u)= u v_u - e^{v_u^2}+1 \,\leq\, u\sqrt{\log(u)} +1 \leq  2u\sqrt{\log(u)},\quad u\geq 1+e^{1/4}.$$
This, combined with \eqref{eq:Orlitz-bound-1}, shows that
\begin{equation}\label{eq:L_varphi_upper_bound}
    \|{\bf 1}_{\mathbb{V}}\|_{L_{\phi^*}(\BB_1(0))}  \leq \inf\left\{ k\in \left(0, \left(1+e^{\frac{1}{4}} \right)^{-1}\right] :  \frac{1}{k}\sqrt{\log\left(\frac{1}{k}\right)} \le \frac{1}{2t\cL_d(\BB_1(0))}\right\}.
\end{equation}
Let $C = 1/(2\cL_d(\BB_1(0)))$. There exists $k_t\in \left(0, \left(1+e^{\frac{1}{4}} \right)^{-1}\right]$ such that 
$ \frac{1}{k_t}\sqrt{\log\left(\frac{1}{k_t}\right)}= \frac{C}{t}$. Thus, the strict monotonicity of $k\mapsto \frac{1}{k}\sqrt{\log\left(\frac{1}{k}\right)}$ implies that
$ \|{\bf 1}_{\mathbb{V}}\|_{L_{\phi^*}(\BB_1(0))} \leq k_t$. Moreover, as $k_t<(1+e^{1/4})^{-1}$, we derive that $\frac{1}{k_t}\lesssim \frac{1}{t}$,
which implies that 
\begin{equation}\label{eq:kt-bound}
    \frac{1}{k_t}\sqrt{\log\left(\frac{1}{t}\right)} \gtrsim   \frac{1}{k_t}\sqrt{\log\left(\frac{1}{k_t}\right)}= \frac{C}{t}. 
\end{equation}
Therefore, by rearranging the terms in~\eqref{eq:kt-bound}, we conclude that, for $t$ small enough,
$$  \|{\bf 1}_{\mathbb{V}}\|_{L_{\phi^*}(\BB_1(0))}\leq  {k_t}\lesssim {t \sqrt{\log\left(\frac{1}{t}\right)}}, $$
which proves \eqref{eq:claim-d=2}.

{\it Case $d\geq 3$}. The proof follows the same strategy. Instead of applying Trudinger's theorem and the Orlicz norm in~\eqref{eq:h_L_varphi}, we use the Sobolev inequality \citep[p.~284]{Evans-PDE} to obtain
$$ \|h\|_{L^{\frac{2 d}{d-2}}(\BB_1(0))}\lesssim \| h\|_{W^{1,2}(\BB_1(0))}.$$
Arguing as in the case $d=2$, it suffices to show that
$$ \|{\bf 1}_{\mathbb{V}}\|_{L^{\frac{2 d}{d+2}}(\BB_1(0))} \lesssim t^{\frac{d+2}{2 d}},$$
which follows by a direct computation.
\end{proof}

\subsection{Uniform $L^q$ Estimates}\label{sec:Lq-estimate}
Recall that~\Cref{thm:L2-estimate} shows
\begin{equation}\label{eq:L2-estimate2}
   \left\|\BQ_{P_t}-\BQ_{P} \right\|_{L^2(\mu)}^2 \lesssim  \begin{cases}
      t^{1+\frac{2}{d}} & {\rm if} \ d\geq 3,\\
t^{2}{|\log(t)|}     & {\rm if}\ d=2
  \end{cases} 
\end{equation}
for sufficiently small $t$. However, we are interested in the limit of the potential $\xi_t$, which is characterized by\footnote{Here, $\xi_t$ is defined up to an additive constant.}
\begin{equation}\label{eq:xi-t}
    \nabla \xi_t:= \frac{\BQ_{P_t}-\BQ_{P}}{t}.
\end{equation}
Dividing both sides of~\eqref{eq:L2-estimate2} by $t^2$ leads to a divergent right-hand side as $t\to 0$. Therefore, to analyze the limiting behavior of $\xi_t$ as $t\to 0$, we establish the following result: the (centered) $L^q(\Omega_\mu)$ norm of the potential $\xi_t$ is uniformly bounded for small $t$. This will be the key ingredient in proving the $L^q$ convergence of $\xi_t$ in \Cref{Thm:InfluenceQuantiles-main}.
\begin{proposition}\label{Prop:Lq-estimate}
For any $q \in \left(1,\; \frac{d(d+2)}{d^{2}+2d-4}\right)$, there exists $t_0>0$ such that
    $$  \sup_{t\in (0,t_0]} \inf_{m_t\in \R}\| \xi_t-m_t\|_{L^q(\Omega_\mu)}  <\infty.$$
\end{proposition}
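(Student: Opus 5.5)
The plan is to bound $\xi_t$ by duality against solutions of the linearized (adjoint) Neumann problem, and then to control the linearization error using only the $L^2$ estimate of \Cref{thm:L2-estimate} together with a Hölder-type Taylor remainder. Fix $q$ in the stated range and let $q'=q/(q-1)$. The upper endpoint $q<\frac{d(d+2)}{d^2+2d-4}$ is equivalent to $q'>\frac{d(d+2)}{4}$; in particular $q'>d$, since $d(d+2)/4\ge d$ for $d\ge 2$. By duality,
\[
\inf_{m}\|\xi_t-m\|_{L^q(\Omega_\mu)}\lesssim \sup\Big\{\int_{\Omega_\mu}\xi_t h\dd z:\ \|h\|_{L^{q'}}\le 1,\ \textstyle\int h=0\Big\}.
\]
For each such $h$, let $\varphi_h$ be the solution of
\[
\operatorname{div}\big(f_\mu[\nabla\BQ_P]^{-1}\nabla\varphi_h\big)=h
\]
with the homogeneous conormal Neumann condition. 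By \Cref{thm:Cafarrelli}, the coefficients are $\cC^{1,\alpha}$ and uniformly elliptic on a $\cC^{2,1}$ domain. Global $W^{2,q'}$ theory (collected in \Cref{sec:solvability}) then gives $\|\varphi_h\|_{W^{2,q'}}\lesssim\|h\|_{L^{q'}}$. Since $q'>d$, Morrey's embedding yields $\|\nabla\varphi_h\|_{\cC^{0,\beta}}\lesssim 1$ with $\beta=1-d/q'$.

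Integrating by parts, where the boundary term vanishes by the Neumann condition, gives
\[
\int\xi_t h=-\frac1t\int\big\langle \BQ_{P_t}-\BQ_P,[\nabla\BQ_P]^{-1}\nabla\varphi_h\big\rangle \dd\mu .
\]
To evaluate the right-hand side, set $\psi=\varphi_h\circ\BF_P$. By Caffarelli, $\psi$ is $\cC^{1,\beta}$ on $\Omega_P$ with $\nabla\psi(\BQ_P(z))=[\nabla\BQ_P(z)]^{-1}\nabla\varphi_h(z)$, using the symmetry of $\nabla\BQ_P$. The push-forward identities give
\[
\int\psi\circ\BQ_{P_t}\dd\mu-\int\psi\circ\BQ_P\dd\mu=\int\psi\,\rd(P_t-P)=t\Big(\varphi_h(z_0)-\int\varphi_h\dd\mu\Big).
\]
This quantity is $O(t)$ uniformly in $h$, because $\|\varphi_h\|_\infty\lesssim 1$. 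This is exactly the linearization identity of \Cref{lemma:test-linearization}.

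Because $\Omega_P$ is convex and both $\BQ_{P_t}(z)$ and $\BQ_P(z)$ lie in $\Omega_P$, the segment joining them lies in $\Omega_P$. A first-order Taylor expansion with Hölder-continuous gradient along this segment therefore gives
\[
\Big|\psi(\BQ_{P_t})-\psi(\BQ_P)-\big\langle\nabla\psi(\BQ_P),\BQ_{P_t}-\BQ_P\big\rangle\Big|\lesssim\|\BQ_{P_t}-\BQ_P\|^{1+\beta}.
\]
Integrating against $\mu$ and applying Jensen's inequality (valid since $1+\beta\le 2$) together with \Cref{thm:L2-estimate}, the remainder is bounded by
\[
\|\BQ_{P_t}-\BQ_P\|_{L^2(\mu)}^{1+\beta}\lesssim t^{(1+\frac2d)\frac{1+\beta}{2}} \qquad (d\ge3),
\]
and by $(t^2|\log t|)^{\frac{1+\beta}{2}}$ when $d=2$. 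The requirement that this be $O(t)$ is $(1+\beta)\ge \frac{2d}{d+2}$, that is, $d/q'\le \frac{4}{d+2}$. This is precisely $q'\ge\frac{d(d+2)}{4}$, which explains the stated range of $q$. Combining, $|\int\xi_t h|\lesssim 1$ uniformly in $h$ and in $t\le t_0$, which proves the proposition.

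I expect the main obstacle to be the borderline bookkeeping. For $d=2$ the logarithm must be absorbed, which works because $q'>2$ strictly gives $1+\beta>1=\frac{2d}{d+2}$, so the power of $t$ strictly exceeds $1$. For $d\ge3$ one uses the strict inequality $q'>d(d+2)/4$ to obtain some margin. A second delicate point is justifying the global $W^{2,q'}$ Neumann estimate with the constant uniform in $h$, and checking that the integration by parts is legitimate for $\xi_t$. For the latter, I will note that $\xi_t$ is Lipschitz for each fixed $t$, since it is a difference of convex potentials with bounded gradients.
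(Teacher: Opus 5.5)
Your proposal is correct and follows essentially the same route as the paper. Both arguments use duality against the conormal Neumann problem for $\operatorname{div}(f_\mu[\nabla\BQ_P]^{-1}\nabla\,\cdot\,)$, the $W^{2,q'}$ estimate of \Cref{lemma: existence and uniqueness linearized MA} plus Morrey's embedding, integration by parts, and the linearization of \Cref{lemma:test-linearization} (which you reprove inline) combined with \Cref{thm:L2-estimate}, with the same exponent condition $q'>d(d+2)/4$. The only cosmetic difference is the choice of test function: the paper tests directly with the extremal function $\operatorname{sgn}(\xi_t-m_t)|\xi_t-m_t|^{q-1}/\|\xi_t-m_t\|_{L^q}^{q-1}$, choosing $m_t$ so that it has zero mean, whereas you take a supremum over all mean-zero $h$ in the unit ball of $L^{q'}$.
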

To prove \Cref{Prop:Lq-estimate}, we first establish a linearization with respect to a test function. 

\begin{lemma}\label{lemma:test-linearization}
Fix $\eta\in (0,1]$. 
For any $g\in \cC^{1,\eta}(\Omega_\mu)$, 
$$  \bigg|  g(\mathbf{F}_{P}(x_0))-\int_{\Omega_\mu} g  \dd\mu -  \int_{\Omega_\mu} \langle   \nabla g  , [\nabla\BQ_{P}]^{-1} \nabla \xi_t\rangle  \dd\mu \bigg| \lesssim \begin{cases}
    \| \nabla g\|_{_{\CC^\eta(\Omega_\mu)}}  t^{\frac{(d+2)(1+\eta)}{2 d}-1}&\quad {\rm if}\,\, d\geq 3\\
    \| \nabla g\|_{\CC^\eta(\Omega_\mu)} t^{\eta} |\log(t)|^{(1+\eta)/2}&\quad {\rm if}\,\, d=2.
\end{cases}$$
\end{lemma}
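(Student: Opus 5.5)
The plan is to use the push-forward identity for $\BQ_{P_t}$ with a test function built from $g$ and $\BF_P$. Then I would Taylor-expand to first order and control the remainder with the $L^2$ estimate of \Cref{thm:L2-estimate}.

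\textbf{Step 1 (exact identity).} Set $h:=g\circ\BF_P$ on $\Omega_P$. Since $(\BQ_{P_t})_\#\mu=P_t=(1-t)P+t\delta_{x_0}$ and $(\BF_P)_\#P=\mu$, we get
\[
\int_{\Omega_\mu} h(\BQ_{P_t}(z))\dd\mu(z)=(1-t)\int_{\Omega_\mu} g\dd\mu+t\,g(\BF_P(x_0)).
\]
Moreover $h(\BQ_P(z))=g(z)$. Subtracting $\int g\dd\mu$ from both sides gives
\[
\int_{\Omega_\mu}\big[h(\BQ_{P_t}(z))-h(\BQ_P(z))\big]\dd\mu(z)=t\Big(g(\BF_P(x_0))-\int_{\Omega_\mu} g\dd\mu\Big).
\]

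\textbf{Step 2 (linearization).} Since $x_0\in\Omega_P$, the support of $P_t$ is $\Omega_P$, so $\BQ_{P_t}(z)\in\Omega_P$ for $\mu$-a.e.\ $z$. Because $\Omega_P$ is convex, the segment between $\BQ_P(z)$ and $\BQ_{P_t}(z)$ lies in $\Omega_P$. The mean value form of Taylor's theorem then gives
\[
\big|h(y)-h(x)-\langle\nabla h(x),y-x\rangle\big|\le[\nabla h]_{\cC^{0,\eta}(\Omega_P)}\|y-x\|^{1+\eta}
\]
for $x,y\in\Omega_P$. By the chain rule, $\nabla h(x)=[\nabla\BF_P(x)]^{\top}\nabla g(\BF_P(x))$. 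At $x=\BQ_P(z)$ we have $\nabla\BF_P(\BQ_P(z))=[\nabla\BQ_P(z)]^{-1}$, which is symmetric as the inverse of a Hessian. So the linear term equals
\[
\int\langle[\nabla\BQ_P]^{-1}\nabla g,\BQ_{P_t}-\BQ_P\rangle\dd\mu=t\int\langle\nabla g,[\nabla\BQ_P]^{-1}\nabla\xi_t\rangle\dd\mu .
\]
Dividing the identity of Step 1 by $t$, the left-hand side of the lemma is therefore bounded by
\[
t^{-1}[\nabla h]_{\cC^{0,\eta}}\int_{\Omega_\mu}\|\BQ_{P_t}-\BQ_P\|^{1+\eta}\dd\mu .
\]

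\textbf{Step 3 (Hölder constant).} By \Cref{thm:Cafarrelli}, $\BF_P\in\cC^{2,\alpha}(\Omega_P)$, so $\nabla\BF_P$ is bounded and Lipschitz, and $\BF_P$ is Lipschitz. A product/composition estimate then yields $[\nabla h]_{\cC^{0,\eta}(\Omega_P)}\lesssim\|\nabla g\|_{\cC^{\eta}(\Omega_\mu)}$. Here one uses $\eta\le1$ and the boundedness of $\Omega_P$, so that a Lipschitz bound dominates an $\eta$-Hölder bound. This bookkeeping is the only place where care is needed, and I expect it to be the main (mild) technical point.

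\textbf{Step 4 (remainder via the $L^2$ estimate).} Since $1+\eta\le 2$ and $\mu$ is a probability measure, Jensen's (Hölder's) inequality gives
\[
\int\|\BQ_{P_t}-\BQ_P\|^{1+\eta}\dd\mu\le\|\BQ_{P_t}-\BQ_P\|_{L^2(\mu)}^{1+\eta}.
\]
Insert the bound \eqref{claim:L2bound} of \Cref{thm:L2-estimate}:
\begin{itemize}
\item For $d\ge3$ this gives $t^{(1+2/d)(1+\eta)/2}=t^{\frac{(d+2)(1+\eta)}{2d}}$. After dividing by $t$ we obtain $t^{\frac{(d+2)(1+\eta)}{2d}-1}$.
\item For $d=2$ it gives $(t^2|\log t|)^{(1+\eta)/2}=t^{1+\eta}|\log t|^{(1+\eta)/2}$. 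After dividing by $t$ we obtain $t^{\eta}|\log t|^{(1+\eta)/2}$.
\end{itemize}
These are exactly the claimed rates.
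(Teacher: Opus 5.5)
Your proposal is correct and follows essentially the same route as the paper: the push-forward identity applied to $g\circ\BF_P$, a first-order Taylor expansion with $\eta$-H\"older remainder, the chain rule using the symmetry of $[\nabla\BQ_P]^{-1}$, and then the bound $L^{1+\eta}(\mu)\le L^2(\mu)$ combined with \Cref{thm:L2-estimate}. Your explicit remarks on the convexity of $\Omega_P$ (so the Taylor segment stays in the domain) and on the H\"older-constant bookkeeping fill in details the paper leaves implicit.
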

\begin{proof}
We will start by showing the following claim: for any $f\in \cC^{1,\eta}(\Omega_P)$, 
\begin{equation}\label{eq:linearization-f}
    \bigg|  f(x_0)-\int_{\Omega_P} f \dd P -  \int_{\Omega_\mu} \langle \nabla f(\BQ_{P}), \nabla \xi_t\rangle  \dd\mu \bigg| \leq  \begin{cases}
    \| \nabla f\|_{\CC^\eta(\Omega_P)} t^{\frac{(d+2)(1+\eta)}{2 d}-1}&\quad {\rm if}\,\, d\geq 3,\\
    \| \nabla f\|_{\CC^\eta(\Omega_P)} t^{\eta} |\log(t)|^{(1+\eta)/2}&\quad {\rm if}\,\, d=2.
\end{cases}
\end{equation}
Our desired result can then be concluded by taking $f=g \circ \mathbf{F}_{P}$. Note that $f\in \CC^{1,\eta}(\Omega_P)$, $\nabla f=  [\nabla\mathbf{F}_{P}] \nabla g (\mathbf{F}_{P})$, and $\|\nabla f\|_{\CC^{\eta}(\Omega_P)}\lesssim \|\nabla g\|_{\CC^{\eta}(\Omega_\mu)}$. 
Furthermore, we have
$$\nabla f(\BQ_{P})=  [\nabla\mathbf{F}_{P}(\BQ_{P})] \nabla g =  [\nabla\BQ_{P}]^{-1} \nabla g.$$
Hence by the symmetry of $ [\nabla\BQ_{P}]^{-1}$, we get 
$$ \int_{\Omega_\mu} \langle \nabla f(\BQ_{P}), \nabla \xi_t\rangle  \dd\mu=  \int_{\Omega_\mu} \langle   \nabla g  , [\nabla\BQ_{P}]^{-1} \nabla \xi_t\rangle  \dd\mu,$$
which concludes the result.

To show~\eqref{eq:linearization-f}. Fix a test function $f\in \cC^{1,\eta}(\Omega_P)$. Then by the push-forward, it follows that   
    \begin{align*}
        t \left(f(x_0)-\int_{\Omega_P} f \dd P\right)=  \int_{\Omega_P}  f \dd (P_t-P)=& \int_{\Omega_\mu} \left( f(\BQ_{P_t})-f(\BQ_{P}) \right) \dd\mu.
    \end{align*} 
By fundamental theorem of calculus, we have
\begin{align*}
    & f(\BQ_{P_t})-f(\BQ_{P}) -  \langle \nabla f(\BQ_{P}), \BQ_{P_t} - \BQ_P\rangle  \\
    =& \int_0^1 \inner{\nabla f(\BQ_P + s(\BQ_{P_t} - \BQ_P))-\nabla f(\BQ_P))}{\BQ_{P_t} - \BQ_P}\dd s\\
    \leq & \|\nabla f\|_{\CC^\eta(\Omega_P)}\|\BQ_{P_t} - \BQ_P\|^{1+\eta}.
\end{align*}
Therefore, as $\eta \in (0,1]$,
\begin{align*}
  \bigg|  f(x_0)-\int_{\Omega_P} f \dd P -  \int_{\Omega_\mu} \langle \nabla f(\BQ_{P}), \nabla \xi_t\rangle  \dd\mu \bigg| &\leq \frac{1}{t} \| \nabla f\|_{\CC^\eta(\Omega_P)}  \|\BQ_{P_t}-\BQ_{P} \|_{L^{1+\eta}(\mu)}^{1+\eta}\\
    &\leq \frac{1}{t} \| \nabla f\|_{\CC^\eta(\Omega_P)} \|\BQ_{P_t}-\BQ_{P} \|_{L^{2}(\mu)}^{1+\eta}.
\end{align*}
The claim follows from \Cref{thm:L2-estimate}.
\end{proof}

We now prove \Cref{Prop:Lq-estimate}. 
\begin{proof}[Proof of \Cref{Prop:Lq-estimate}] 
Define $${\rm sgn}(s)= \begin{cases}
    1& {\rm if}\ s>0,\\
    0& {\rm if}\ s=0,\\
    -1& {\rm if}\ s<0.
\end{cases}$$
Given $q>1$ whose value will be specified at the end, denote by $\beta=q-1$, and let $p$ be the H\"older conjugate of $q$.  Take $m_t \in \R$ such that 
$$ \int_{\Omega_\mu} {\rm sgn}(\xi_t-m_t) |\xi_t-m_t|^{\beta}\dd z=0.$$
Assume that $\xi_t\neq m_t$ as otherwise the result is trivial and define the function 
$$ g_\beta= \frac{{\rm sgn}(\xi_t-m_t) |\xi_t-m_t|^{\beta}}{\|\xi_t-m_t\|_{L^{q}(\Omega_\mu)}^{q/p}}. $$
Note that $g_\beta$ satisfies \eqref{eq: compatibility condition-general} as $\int_{\Omega_\mu} g_\beta\dd z= 0$ and  $g_\beta \in {L^p}(\Omega_\mu) $ with 
\begin{align*}
    \| g_\beta\|_{L^p(\Omega_\mu)}&= \left(  \int_{\Omega_\mu} \frac{|\xi_t-m_t|^{(q-1)p}}{\|\xi_t-m_t\|_{L^{q}(\Omega_\mu)}^{q}}\dd z\right)^{\frac{1}{p}}\\
    &=  \frac{1}{\|\xi_t-m_t\|_{L^{q}}^{q/p}}\left(  \int_{\Omega_\mu} {|\xi_t-m_t|^{q}}\dd z \right)^{\frac{1}{p}} = 1
\end{align*}
as $(q-1)p = q$.
Set $h_\beta =\BL^{-1}(g_\beta,0)$ where $\BL$ is defined in \eqref{eq:operator-L} and satisfies that $\int_{\Omega_\mu} h_\beta \dd z =0$. Note that, by \Cref{lemma: existence and uniqueness linearized MA}, $\|h_\beta\|_{W^{2,p}(\Omega_\mu)} \leq C $, for some constant $C>0$ independent of $t$. Furthermore, Morrey's inequality (see \cite[Theorem 7.26]{GilbargTrudinger.Book}) yields 
 \begin{equation}
     \label{eq:Morrey-ineq}
     \|h_\beta\|_{ \cC^{1,\eta}(\Omega_\mu)} \lesssim  \|h_\beta\|_{W^{2,p}(\Omega_\mu)}, \quad \text{for}\quad   p>d \quad   \text{and}\quad  \eta\leq 1- \frac{d}{p} . 
 \end{equation}
On the other hand, for $\eta>\frac{2d}{d+2}-1$, applying
 \Cref{lemma:test-linearization} with $g=h_\beta$ yields that 
 $$  \bigg|\int_{\Omega_\mu} \langle   \nabla h_\beta  , [\nabla\BQ_{P}]^{-1} \nabla \xi_t\rangle \dd \mu \bigg| \lesssim \|\nabla h_\beta\|_{\CC^{\eta}(\Omega_\mu)} + \|h_\beta\|_{L^\infty(\Omega_\mu)}<\infty,$$
where in the last inequality we used \eqref{eq:Morrey-ineq}. 
Using integration by parts and recalling that $\BL(h_\beta)= (g_\beta, 0)$, it follows that 
\begin{align*}
\int_{\Omega_\mu} \langle    \nabla h_\beta  , [\nabla\BQ_{P}]^{-1}\nabla \xi_t\rangle \dd \mu
    &=\int_{\Omega_\mu} \langle  [\nabla\BQ_{P}]^{-1}  \nabla h_\beta  , \nabla \xi_t\rangle \dd \mu\\
    &= \int_{\Omega_\mu} \langle  [\nabla\BQ_{P}]^{-1}  \nabla h_\beta  , \nabla (\xi_t-m_t)\rangle f_\mu \dd z\\
    &=\int_{\partial \Omega_\mu}  \langle  f_\mu[\nabla\BQ_{P}]^{-1}  \nabla h_\beta ,   n_{\mu} \rangle  (\xi_t-m_t) \dd \mathcal{H}^{d-1}\\
    &\hspace{1cm}-\int_{\Omega_\mu}  {\rm div} (f_\mu [\nabla\BQ_{P}]^{-1} \nabla h_\beta  ) (\xi_t-m_t)\dd z\\
    &= -\| \xi_t-m_t\|_{L^q(\Omega_\mu)}, 
\end{align*}
where we recall that $n_\mu$ is the outward unit normal to $\Omega_\mu$. Choosing $\eta \in \left(\frac{2d}{d+2}-1, 1\right)$ and  $p=  \frac{d}{1-\eta} >d$, we conclude the bound for any $q = \frac{p}{p-1} \in \left(1, \frac{d(d+2)}{d^{2}+2d-4}\right)
$.
\end{proof}

\subsection{Local H\"older estimates}\label{sec:holder}
In this section, we establish uniform $\cC_{\rm loc}^{2,\alpha}$ estimates of $\xi_t$ on $\Omega_\mu\setminus \BF_P(x_0)$. Excluding the point $\BF_P(x_0)$ is necessary due to the expected pole-type singularity suggested in~\Cref{sec:Explicit}. Recall that $K_{t,x_0} := (\partial \varphi_{t})^{-1}(x_0)\subset \Omega_\mu$, that $\BQ_{P_t}$ pushes $\frac{1}{1-t} {\bf 1}_{\R^d\setminus K_{t,x_0}}\dd \mu$ forward to $P$, and that $\xi_t$ is defined in~\eqref{eq:xi-t}.
\begin{proposition}\label{prop:Uniform-estimate}
 Under the setting of \Cref{Thm:InfluenceQuantiles-main}, for every open set $U\Subset \Omega_\mu\setminus\{\BF_P(x_0)\}$, there exists $t_0>0$ and $C>0$ such that
  $$  \sup_{t\in (0,t_0)} \inf_{m_t\in \R} \|\xi_t -m_t\|_{\cC^{2,\alpha}(U)} \leq C.  $$
\end{proposition}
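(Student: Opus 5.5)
We plan to derive an elliptic equation for $\xi_t$ on $U$, valid once $U$ is disjoint from the contact set $K_{t,x_0}$. We then bootstrap the uniform $L^q$ bound of \Cref{Prop:Lq-estimate} to $L^\infty$ via the local maximum principle, and finally to $\cC^{2,\alpha}$ via Schauder estimates.

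\textbf{Step 1: separate $U$ from the contact set.} Fix $U\Subset \Omega_\mu\setminus\{z_0\}$ with $z_0=\BF_P(x_0)$, and choose $U'$ with $U\Subset U'\Subset \Omega_\mu\setminus\{z_0\}$. By \Cref{prop:shape}, $K_{t,x_0}$ lies in a ball of radius $O(t^{1/d})$ (resp.\ $O(\sqrt{t|\log t|})$ if $d=2$) around a point within the same distance of $z_0$. Hence, for $t<t_0$, $K_{t,x_0}\cap \overline{U'}=\emptyset$.

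\textbf{Step 2: the equation on $U'$.} On $\operatorname{int}(\Omega_\mu)\setminus K_{t,x_0}$, the map $\BQ_{P_t}=\nabla\varphi_t$ pushes $\frac{1}{1-t}\mu$ restricted there onto $P$. Thus $\varphi_t$ is an Alexandrov solution of
\[
\det\nabla^2\varphi_t=\frac{f_\mu}{(1-t)f_P(\nabla\varphi_t)}\quad\text{on } U'.
\]
We need uniform interior $\cC^{2,\alpha}$ bounds for $\varphi_t$ on $U'$, with $\nabla^2\varphi_t$ uniformly elliptic. We would obtain them as follows.
\begin{itemize}
\item \Cref{thm:L2-estimate} gives $\BQ_{P_t}\to\BQ_P$ in $L^2$, and convexity upgrades this to local uniform convergence of $\varphi_t\to\varphi$.
\item Caffarelli's interior theory for strictly convex solutions with $\cC^{\alpha}$ right-hand side then gives uniform $\cC^{2,\alpha}(U'')$ bounds on a slightly smaller $U''$, since strict convexity is inherited from the limit $\varphi$ by stability.
\end{itemize}
Next we subtract the equations for $\varphi_t$ and $\varphi$, take logarithms and divide by $t$. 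Writing $\nabla^2\varphi_t-\nabla^2\varphi=t\nabla^2\xi_t$, this gives the linear equation
\[
a^{ij}_t\,\partial_{ij}\xi_t + b^i_t\,\partial_i\xi_t = \frac{-\log(1-t)}{t},
\]
with
\[
a_t=\int_0^1\big[\nabla^2\varphi+s t\nabla^2\xi_t\big]^{-1}\,\rd s,
\qquad
b_t=\int_0^1 \nabla\log f_P\big(\nabla\varphi+st\nabla\xi_t\big)\,\rd s .
\]
The coefficients are uniformly elliptic and bounded in $\cC^{\alpha}$ on $U''$ by the previous bounds, and the right-hand side is bounded as $t\downarrow 0$.

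\textbf{Step 3: local maximum principle.} Apply the local maximum principle \cite[Theorem~9.20]{GilbargTrudinger.Book} for strong $W^{2,d}$ solutions, in its version valid for any exponent $q>0$, to $\xi_t-m_t$ on balls covering $U$. This gives
\[
\sup_{U'''}|\xi_t-m_t|\lesssim \|\xi_t-m_t\|_{L^q(U'')}+O(1),
\]
and the right-hand side is bounded uniformly in $t$ by \Cref{Prop:Lq-estimate}.

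\textbf{Step 4: Schauder estimates.} The interior Schauder estimate \cite[Theorem~6.2]{GilbargTrudinger.Book} then gives
\[
\|\xi_t-m_t\|_{\cC^{2,\alpha}(U)}\lesssim \sup_{U'''}|\xi_t-m_t|+O(1),
\]
uniformly in $t$, which is the claim.

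\textbf{Main obstacle.} We expect the hardest step to be the uniform interior $\cC^{2,\alpha}$ control of $\varphi_t$ in Step 2. It is needed so that the coefficients $a_t,b_t$ have uniform $\cC^{\alpha}$ norms, and the difficulty is that $\varphi_t$ is only known to solve the Monge–Ampère equation away from $K_{t,x_0}$. Near $\partial\Omega_\mu$ one would additionally need the boundary version, using the second boundary value problem regularity of \Cref{thm:Cafarrelli}. However, since $U\Subset\Omega_\mu$ we intend to stay interior: we would use Caffarelli's strict convexity plus $\cC^{2,\alpha}$ theory together with local uniform convergence $\varphi_t\to\varphi$ to transfer strict convexity uniformly.
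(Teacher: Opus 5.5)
Your proposal is correct and follows the paper's proof. You separate $U$ from $K_{t,x_0}$ via \Cref{prop:shape}, get uniform interior $\cC^{2,\alpha}$ bounds and uniform ellipticity for $\varphi_t$, linearize the log-determinant equation with the same coefficients, and then apply the local maximum principle (fed by \Cref{Prop:Lq-estimate}) followed by interior Schauder estimates. The paper proves the $\varphi_t$ bounds as \Cref{lemma:Interior-time-depend}, using sections, John's-lemma normalization and uniform estimates for normalized solutions, which is the concrete version of your \emph{strict convexity inherited by stability}. The only minor difference is the source of $\varphi_t\to\varphi$: you derive local uniform convergence from \Cref{thm:L2-estimate} plus equi-Lipschitz convexity, while the paper cites a del Barrio--Loubes stability result, and either works.
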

 As a consequence of~\Cref{prop:shape}, we derive the following result, which is used in the proof of \Cref{prop:Uniform-estimate}.

\begin{lemma}\label{lemma:Interior-time-depend}
    Under the setting of \Cref{prop:Uniform-estimate}, %
    there exists $$t_0=t_0\!\left(
d,\alpha,\lambda,\Lambda,
\Omega_\mu,\Omega_P,
\|f_\mu\|_{\cC^{1,\alpha}(\Omega_\mu)},
\|f_P\|_{\cC^{1,\alpha}(\Omega_P)}
,U\right)>0$$ and $$C=C\!\left(
d,\alpha,\lambda,\Lambda,
\Omega_\mu,\Omega_P,
\|f_\mu\|_{\cC^{1,\alpha}(\Omega_\mu)},
\|f_P\|_{\cC^{1,\alpha}(\Omega_P)}
,U\right)>0$$ such  that, for every $t\in (0,t_0)$,
$$  \|\BQ_{P_t}\|_{\cC^{1,\alpha}(U)}\leq C \quad \text{and}\quad \frac{1}{C} {\rm I}_d  \leq  \nabla \BQ_{P_t}(z)  \leq C{\rm I}_d \quad \text{for all}\,\,z\in U.$$  
\end{lemma}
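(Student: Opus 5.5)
We will prove \Cref{lemma:Interior-time-depend} by a localized Caffarelli regularity argument for the perturbed transport. The starting point is the observation recalled above: $\BQ_{P_t}$ pushes $\mu_t:=\frac{1}{1-t}{\bf 1}_{\Omega_\mu\setminus K_{t,x_0}}\mu$ forward to $P$. Away from $K_{t,x_0}$, the Brenier potential $\varphi_t$ therefore solves the Monge--Amp\`ere equation
\begin{equation*}
\det \nabla^2\varphi_t(z)=\frac{f_\mu(z)}{(1-t)\,f_P(\nabla\varphi_t(z))}
\end{equation*}
in the Brenier/Alexandrov sense. The right-hand side has $\cC^{1,\alpha}$ data bounded between positive constants independent of $t$, and the target $\Omega_P$ is convex.

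\textbf{Step 1: keep $U$ away from the hole.} By \Cref{prop:shape}, $K_{t,x_0}$ lies in a ball of radius $O(t^{1/d})$ (or $O(\sqrt{t|\log t|})$ if $d=2$) whose distance from $z_0=\BF_P(x_0)$ is $O(t^{1/d})$. Since $U\Subset\Omega_\mu\setminus\{z_0\}$, set $\delta:=\operatorname{dist}(U,\{z_0\}\cup\partial\Omega_\mu)>0$. Then for $t<t_0(U)$ we get $K_{t,x_0}\subset\BB_{\delta/4}(z_0)$. Hence on the open set $V:=\{z\in\operatorname{int}(\Omega_\mu):\operatorname{dist}(z,U)<\delta/2\}$ the source density is exactly $f_\mu/(1-t)$.

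\textbf{Step 2: strict convexity and interior Caffarelli estimates on $V$.} Caffarelli's interior theory (strict convexity, then $\cC^{1,\beta}$, then $\cC^{2,\alpha}$ with the Schauder bootstrap) needs two inputs: a convex target, and a density pinched between constants on the domain where the equation holds. The obstacle is that the source support $\Omega_\mu\setminus K_{t,x_0}$ is not convex, so the global theorem (\Cref{thm:Cafarrelli}) cannot be applied to $\mu_t\to P$ directly. I would instead work with the inverse map $\BF_{P_t}=\nabla\varphi_t^*$.
\begin{itemize}
\item Its source $\Omega_P$ is convex, while its target $\mu_t$ has a non-convex support. This is precisely the setting where Caffarelli's interior regularity of the potential can fail near the hole but holds locally wherever the sections of $\varphi_t$ stay inside the region with good density.
\item Concretely, I would show that sections $S_h(z)$ of $\varphi_t$ centered at $z\in \overline U$ with height $h\le h_0$ stay in $V$, uniformly in $t$.
\item To do this, use the uniform $L^2$ closeness $\|\nabla\varphi_t-\nabla\varphi\|_{L^2(\mu)}\to0$ from \Cref{thm:L2-estimate}, upgraded to local uniform convergence of $\varphi_t\to\varphi$ by convexity. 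The sections of the limit $\varphi$ are uniformly strictly convex by \Cref{thm:Cafarrelli}, so small sections of $\varphi_t$ near $\overline U$ are uniformly localized for $t$ small.
\end{itemize}
Once the sections stay in $V$, Caffarelli's localized estimates give $\|\varphi_t\|_{\cC^{2,\alpha}(U)}\le C$ and $C^{-1}{\rm I}_d\le\nabla^2\varphi_t\le C{\rm I}_d$ on $U$, with constants depending only on the listed quantities and on $\delta$ (i.e.\ on $U$).

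\textbf{Main obstacle.} The hardest step is the uniform localization of sections in Step 2: showing that $\varphi_t$ is strictly convex near $U$ uniformly in $t$ even though its source has a hole. I would first try a contradiction/compactness argument. Suppose a sequence $t_k\to0$ has sections through points of $\overline U$ that are not localized. By stability, $\varphi_{t_k}\to\varphi$ locally uniformly, so such degenerate sections would produce a non-strictly-convex limit, contradicting \Cref{thm:Cafarrelli} applied to $\varphi$. Caffarelli's argument that extremal points of a contact set must lie on the boundary of the support, where the density vanishes, is the fallback ingredient.
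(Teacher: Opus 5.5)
Your proposal follows essentially the paper's route. \Cref{prop:shape} keeps $K_{t,x_0}$ out of a neighbourhood $V$ of $U$. Uniform convergence $\varphi_t\to\varphi$, together with $m\,\mathrm{I}_d\preceq D^2\varphi\preceq M\,\mathrm{I}_d$ from \Cref{thm:Cafarrelli}, then traps fixed-height sections of $\varphi_t$ at points of $U$ between two balls inside $V$; you should record the inner ball as well, since it is what keeps the John normalization uniform. Finally, Caffarelli's interior estimates on the normalized sections, the Schauder bootstrap and the lower bound on $\det D^2\varphi_t$ give the uniform $\cC^{2,\alpha}$ and two-sided Hessian bounds.

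The one misstep is the proposed detour through $\BF_{P_t}=\nabla\varphi_t^*$, which is backwards and unnecessary. Caffarelli's interior theory needs convexity of the target only. That holds for $\nabla\varphi_t$, whose target is $\Omega_P$, but fails for the inverse; indeed $\partial\varphi_t^*(x_0)=K_{t,x_0}$ is not a singleton. So strict convexity on $V$ for each fixed $t$ is immediate, and the real work is only the uniformity in $t$, which your section-localization step supplies.
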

\begin{proof}[Proof of \Cref{lemma:Interior-time-depend}]
    Recall that $\BQ_{P_t}=\nabla\varphi_t$ for all $t\geq0$, where
    $\varphi_t:\R^d\to\R$ is a convex function.
    In what follows, we sketch the argument based on the ``sections'' to
    show that $\varphi_t\in\cC^{2,\alpha}(U)$ and that
    $\|\varphi_t\|_{\cC^{2,\alpha}(U)}$ is uniformly bounded for
    $t\leq t_0$.
    The bound depends on the fixed interior set $U$, the regularity data
    of $\mu$ and $P$, and $d,\lambda,\Lambda,\alpha$.
    For the detailed argument, we refer the reader to
    \cite[Section~4]{figalli2017monge}, which provides a comprehensive
    overview of the related techniques.

    As $U$ is open and $U\Subset \Omega_\mu\setminus\{\BF_P(x_0)\}$, there exists an open set
    \[
        U\Subset V\Subset
        \Omega_\mu\setminus\{\BF_P(x_0)\}.
    \]
    Note that \Cref{prop:shape} implies that
    $V\cap K_{t,x_0}=\emptyset$ for $t$ small enough, where
    \[
        K_{t,x_0}:=(\partial\varphi_{t,x_0})^{-1}(x_0).
    \]
    Furthermore, $\nabla\varphi_t$ pushes
    $\mu|_{\R^d\setminus K_{t,x_0}}$ forward to $(1-t)P$.
    The measures $(1-t)P$ and $\mu$ have densities bounded away from
    zero and bounded above on their supports, and the target $(1-t)P$ is
    supported on the convex set $\Omega_P$. Hence, Caffarelli's interior
    regularity theorem \cite{Caffarelli.92} implies that
    $\partial\varphi_t$ is single-valued in $V$ for $t$ small enough.

    By decreasing $t_0$ if necessary, we assume throughout that
    $t_0\leq 1/2$.
    Take $z\in U$ and set $p_t=\nabla\varphi_t(z)$. Define the affine
    function
    \[
        l_{t,z,\vae}(y)
        =
        \varphi_t(z)+\inner{p_t}{y-z}+\vae,
    \]
    with the convention
    \[
        l_{0,z,\vae}(y)
        =
        \varphi(z)+\inner{\nabla\varphi(z)}{y-z}+\vae.
    \]
    Define
    \[
        S_{t,z,\vae}
        =
        \left\{
        y\in\Omega_\mu:
        \varphi_t(y)<l_{t,z,\vae}(y)
        \right\}.
    \]
    By \Cref{thm:Cafarrelli}, there exist constants
    $0<m\leq M<\infty$ such that
    \[
        m\,\mathrm{Id}
        \preceq D^2\varphi
        \preceq M\,\mathrm{Id}
        \qquad\text{on }\Omega_\mu.
    \]
    Fix $\vae>0$ sufficiently small that
    $2\sqrt{\frac{\vae}{m}}
    <\operatorname{dist}(\overline U,\partial V)$.
    We claim that
    \begin{equation}
        \label{eq:section_shape}
        S_{0,z,\vae/2}
        \subset
        S_{t,z,\vae}
        \subset
        S_{0,z,2\vae},
        \qquad
        z\in U,\quad t\leq t_0.
    \end{equation}
    By definition,
    \[
        S_{t,z,\vae}
        =
        \left\{
        y\in\Omega_\mu:
        \varphi_t(y)-\varphi_t(z)
        -\inner{p_t}{y-z}<\vae
        \right\}.
    \]
    By \cite[Theorem~2.8]{delBarrioLoubes.19}, $\varphi_t$ converges
    pointwise to $\varphi$ in $\Omega_\mu$. Since the image of the
    subgradient $\partial\varphi_t(\Omega_\mu)$ is bounded,
    $\{\varphi_t\}_{t\leq t_0}$ is uniformly Lipschitz and, under the
    chosen normalization, converges uniformly to $\varphi$ on
    $\Omega_\mu$ by the Arzelà--Ascoli theorem. Furthermore,
    $\nabla\varphi_t$ converges uniformly to $\nabla\varphi$ on $V$ by
    \cite[Theorem~7.17]{RockafellarWets1998}.
    For
    \[
        D_t(z,y)
        :=
        \varphi_t(y)-\varphi_t(z)
        -\inner{\nabla\varphi_t(z)}{y-z},
    \]
    the preceding uniform convergences and the boundedness of
    $\Omega_\mu$ imply, after decreasing $t_0$ if necessary, that
    \[
        \sup_{\substack{z\in U\\y\in\Omega_\mu}}
        |D_t(z,y)-D_0(z,y)|
        \leq\frac{\vae}{2},
        \qquad t\leq t_0.
    \]
    This proves \eqref{eq:section_shape}, uniformly in $z\in U$.
    Moreover, the bounds on $D^2\varphi$ give
    \[
        \frac{m}{2}\|y-z\|^2
        \leq D_0(z,y)
        \leq\frac{M}{2}\|y-z\|^2.
    \]
    Consequently,
    \begin{equation}
        \label{eq:uniform-section-balls}
        \BB_{\sqrt{\frac{\vae}{M}}}(z)
        \subset
        S_{0,z,\vae/2}
        \subset
        S_{t,z,\vae}
        \subset
        S_{0,z,2\vae}
        \subset
        \BB_{2\sqrt{\frac{\vae}{m}}}(z)
        \Subset V,
    \end{equation}
    uniformly for $z\in U$ and $t\leq t_0$.

    Since $\varphi_t$ is convex, John's lemma
    (\Cref{lemma:john}) gives an affine map
    $L_{t,z,\vae}$ such that
    \[
        \BB_1(0)
        \subset
      L_{t,z,\vae}(S_{t,z,\vae})
        \subset
        d\,\BB_1(0), 
    \]
    where 
    $A_{t,z,\vae}:=\nabla L_{t,z,\vae}$ satisfies
    $\det A_{t,z,\vae}>0$.
    The inclusions \eqref{eq:uniform-section-balls} give uniform control
    of both $A_{t,z,\vae}$ and
$A_{t,z,\vae}^{-1}$. Indeed, for every
    $e\in\R^d$ with $\|e\|=1$,
    \[
        \sqrt{\frac{\vae}{M}}
        \|A_{t,z,\vae}e\|
        =
        \left\|L_{t,z,\vae}
        \left(
    z+\frac{\sqrt{\frac{\vae}{M}}}{2}e
        \right)
        - {L_{t,z,\vae}}
        \left(
        z-\frac{\sqrt{\frac{\vae}{M}}}{2}e
        \right)
        \right\|
        \leq 2d,
    \]
    and hence
    \(
        \|A_{t,z,\vae}\|
        \leq\frac{2d}{\sqrt{\frac{\vae}{M}}}.
    \)
    On the other hand, since
    \(
        \BB_1(0)
        \subset
        L_{t,z,\vae}(S_{t,z,\vae}),
    \)
    there exist $y_t^0,y_t^e\in S_{t,z,\vae}$ such that
    \(L_{t,z,\vae}({y_t^0})=0
    \)
    and
    \(L_{t,z,\vae}({y_t^e})
        =\frac{e}{2}.
    \)
    Therefore,
    \[
        \frac12
        \|A_{t,z,\vae}^{-1}e\|
        =
        \|y_t^e-y_t^0\|
        \leq
        \operatorname{diam}(S_{t,z,\vae})
        \leq
        {4\sqrt{\frac{\vae}{m}}},
    \]
    so that
    \(
        \|A_{t,z,\vae}^{-1}\|
        \leq
        {8\sqrt{\frac{\vae}{m}}}.
    \)
    In particular,
    \begin{equation}
        \label{eq:affine-uniform}
        {
        \left(8\sqrt{\frac{\vae}{m}}\right)^{-d}}
        \leq
        \det{A_{t,z,\vae}}
        \leq
        \left(
        \frac{2d}{\sqrt{\frac{\vae}{M}}}
        \right)^d.
    \end{equation}
    Furthermore,
    \begin{equation}
        \label{eq:normalized-interior}
        \operatorname{dist}
        \left(
       L_{t,z,\vae}(z),
        \partial L_{t,z,\vae}(S_{t,z,\vae})
        \right)
        \geq
        \frac{
        \sqrt{\frac{\vae}{M}}
        }{
        \|A_{t,z,\vae}^{-1}\|
        }
        \geq
        \frac{
        \sqrt{\frac{\vae}{M}}
        }{
        8\sqrt{\frac{\vae}{m}}
        }
        =:\theta_0>0.
    \end{equation}
Define
    \begin{equation}
        \label{eq:u_t}
        \begin{aligned}
            u_t(y)
            &=
            \bigl(\det{A_{t,z,\vae}}\bigr)^{2/d}
            \left(
            \varphi_t-l_{t,z,\vae}
            \right)
            \bigl(
          L_{t,z,\vae}^{-1}(y)
            \bigr)
            \\ &=\bigl(\det{A_{t,z,\vae}}\bigr)^{2/d}
            \left(
            \varphi_t
            \bigl(
            {L_{t,z,\vae}^{-1}}(y)
            \bigr)
            -\varphi_t(z)
            -\inner{p_t}{
            {L_{t,z,\vae}^{-1}}(y)-z
            }
            -\vae
            \right).
        \end{aligned}
    \end{equation}
    Set
    \(
        Z_{t,\vae}
        :=
        {L_{t,z,\vae}}(S_{t,z,\vae}).
    \)
    Then $u_t$ solves, in the Alexandrov sense,
    \begin{equation}
        \label{eq:normalized-MA}
        \begin{cases}
            \displaystyle
            \det(D^2u_t)
            =
            \frac{
            f_\mu\circ
            {L_{t,z,\vae}^{-1}}
            }{
            (1-t)f_P\circ\nabla\varphi_t\circ
            {L_{t,z,\vae}^{-1}}
            }
            &\text{in }Z_{t,\vae},
            \\[3mm]
            u_t=0
            &\text{on }\partial Z_{t,\vae}.
        \end{cases}
    \end{equation}
    See \cite[Definition~2.1 and Theorem~4.23]{figalli2017monge}.
    Such $u_t$ is referred to as the \textit{normalized solution}.

    Since $\mu$ and $P$ are regular, and $t_0\leq1/2$, the right-hand
    side of \eqref{eq:normalized-MA} is bounded between
    $\frac{\lambda}{\Lambda}$ and $\frac{2\Lambda}{\lambda}$.
    Hence, \cite[Corollary~4.11]{figalli2017monge} shows that $u_t$ is
    strictly convex on $Z_{t,\vae}$. In view of \eqref{eq:u_t}, this
    shows that $\varphi_t$ is strictly convex in $S_{t,z,\vae}$.
    Since $z\in U$ is arbitrary, $\varphi_t$ is strictly convex in $U$. Hence, \cite[Corollary~4.21]{figalli2017monge} yields
    \(u_t\in\cC_{\rm Loc}^{1,\gamma}(Z_{t,\vae})\)
    for some $0<\gamma\leq\alpha$ depending only on
    $d,\lambda,\Lambda$.
    Differentiating \eqref{eq:u_t} gives
    \begin{equation}
        \label{eq:gradient-change-variables}
        \nabla\varphi_t
        \bigl(
        {L_{t,z,\vae}^{-1}}(y)
        \bigr)
        =
        p_t
        +
        \bigl(\det{A_{t,z,\vae}}\bigr)^{-2/d}
        {A_{t,z,\vae}}^\top
        \nabla u_t(y).
    \end{equation}
    Therefore, the regularity of $f_\mu$ and $f_P$, together with
    \eqref{eq:affine-uniform} and the local
    $\cC^{1,\gamma}$ estimate for $u_t$, implies that the right-hand
    side of \eqref{eq:normalized-MA} belongs to
    \(\cC_{\rm Loc}^{0,\gamma}(Z_{t,\vae})\).
    Thus, \cite[Corollary~4.43]{figalli2017monge} shows that
    \(u_t\in\cC_{\rm Loc}^{2,\gamma}(Z_{t,\vae})\).
    It follows from \eqref{eq:u_t} that
    $\varphi_t\in\cC_{\rm Loc}^{2,\gamma}(U)$. Since
    $f_\mu,f_P\in\cC^{1,\alpha}$, the right-hand side of
    \eqref{eq:normalized-MA} is then locally $\cC^{0,\alpha}$.
    Iterating the preceding argument gives
    \(\varphi_t\in\cC_{\rm Loc}^{2,\alpha}(U)\).

    To obtain an estimate uniform in $t$, note first that the normalized
    domains satisfy $\BB_1(0)\subset Z_{t,\vae}\subset d\BB_1(0),$
    and, by \eqref{eq:normalized-interior}, the point
    \(
        y_{t,z}
        :=
        {L_{t,z,\vae}}(z)
    \)
    remains a fixed positive distance from $\partial Z_{t,\vae}$.
    The estimates in
    \cite[Corollary~4.43 and Lemma~4.41]{figalli2017monge}, together
    with the bootstrap above, therefore yield
    \begin{equation}
        \label{eq:uniform-normalized-estimate}
        \|u_t\|_{\cC^{2,\alpha}
        (\BB_{\theta_0/2}(y_{t,z}))}
        \leq C,
    \end{equation}
    where $C$ is independent of $t\leq t_0$ and $z\in U$.
    Differentiating \eqref{eq:u_t} twice gives
    \begin{equation}
        \label{eq:hessian-change-variables}
        D^2\varphi_t(x)
        =
        \bigl(\det{A_{t,z,\vae}}\bigr)^{-2/d}
        {A_{t,z,\vae}}^\top
        D^2u_t
        \bigl(
        {L_{t,z,\vae}}(x)
        \bigr)
        {A_{t,z,\vae}}.
    \end{equation}
    Let
    \(
        {
        C_A
        :=
        \sup_{\substack{z\in U\\0\leq t\leq t_0}}
        \|A_{t,z,\vae}\|}
        <\infty
    \)
    and
    \(
        \eta_0:=\frac{\theta_0}{2C_A}.
    \)
    If $x\in\BB_{\eta_0}(z)$, then
    \(
        {L_{t,z,\vae}}(x)
        \in
        \BB_{\theta_0/2}
        \bigl(
        {L_{t,z,\vae}}(z)
        \bigr).
    \)
    Consequently, \eqref{eq:affine-uniform},
    \eqref{eq:uniform-normalized-estimate}, and
    \eqref{eq:hessian-change-variables} imply
    \[
        \|\varphi_t\|_{\cC^{2,\alpha}(\BB_{\eta_0}(z))}
        \leq C,
    \]
    uniformly in $z\in U$ and $t\leq t_0$. Since $\eta_0$ is independent
    of $z$ and  the closure {$\overline U$ is compact}, a finite-covering
    argument gives
    \[
        \sup_{t\leq t_0}
        \|\varphi_t\|_{\cC^{2,\alpha}(U)}
        \leq C.
    \]
    Here the zeroth- and first-order terms are uniformly controlled by
    the uniform convergence of $\varphi_t$ and the boundedness of
    $\partial\varphi_t(\Omega_\mu)\subset\Omega_P$.

    Finally, on $U$,
    \[
        \det D^2\varphi_t
        =
        \frac{f_\mu}
        {(1-t)f_P\circ\nabla\varphi_t}
        \geq
        \frac{\lambda}{\Lambda}.
    \]
    The uniform upper bound on $D^2\varphi_t$, together with this
    determinant lower bound, implies
    \[
        \frac1C\,\mathrm{Id}
        \preceq
        D^2\varphi_t
        \preceq
        C\,\mathrm{Id}
        \qquad\text{on }U.
    \]
    Since $\BQ_{P_t}=\nabla\varphi_t$, this proves both asserted
    conclusions.
\end{proof}
We are now ready to prove \Cref{prop:Uniform-estimate}. 
\begin{proof}[Proof of \Cref{prop:Uniform-estimate}]
Fix a compact set $K$ such that
\[
U\subset K\subset \Omega_\mu\setminus\{\BF_P(x_0)\}.
\]
By \Cref{prop:shape}, there exists $t_0>0$ such that
\[
K_{t,x_0}\cap K=\emptyset,
\qquad
\text{for every }t\in(0,t_0].
\]
Therefore, for every $t\in(0,t_0]$,
\Cref{lemma:Interior-time-depend} implies that $\BQ_{P_t}$ is a strong
solution of
\begin{equation}\label{eq:MA-t}
\log(\det(\nabla \BQ_{P_t}(z))) = {\log(f_\mu(z))}-\log(1-t)-\log({f_P(\BQ_{P_t}(z)})), \qquad z \in K.
\end{equation}
 Likewise, we have that 
\begin{equation}\label{eq:MA-0}
\log(\det(\nabla \BQ_{P}(z))) = {\log(f_\mu(z))}-\log({f_P(\BQ_{P}(z))}), \qquad z \in K.
\end{equation}
Note that for $A \in \BS_d^+$, 
\[
\frac{\dd}{\dd\epsilon} \bigg|_{\epsilon=0} \log (\det (A+\epsilon B)) = \tr \Big( A^{-1} B \Big). 
\]
Let $A_s= s \nabla \BQ_{P_t}+ (1-s) \nabla \BQ_{P} $, then the above formula and the fundamental theorem of calculus give
$$ \log(\det(\nabla \BQ_{P_t}(z)))-\log(\det(\nabla \BQ_{P}(z))) =  t   \int_{0}^1  \tr \Big( A_s(z)^{-1} \nabla^2 \xi_t(z)   \Big) \dd s .$$
By the linearity of the trace operator, we know that 
\begin{equation}
    \label{eq:strong-linearization}
    \log(\det(\nabla \BQ_{P_t}(z)))-\log(\det(\nabla \BQ_{P}(z))) =    t\cdot  \tr \Big(  \Big(\int_{0}^1 A_s(z)^{-1} \dd s \Big) \nabla^2 \xi_t (z)  \Big)  .
\end{equation}
On the other hand, the same argument shows that 
\begin{equation}
 \label{eq:strong-linearization-2}
  \log({f_P(\BQ_{P_t}(z))}) -\log({f_P(\BQ_{P}(z))})= t  \left\langle \int_{0}^1 \frac{\nabla f_P(\BT_s(z) )}{ f_P(\BT_s(z) )} \dd s, \nabla \xi_t(z) \right\rangle,
\end{equation}
where $\BT_s= s  \BQ_{P_t}+ (1-s) \BQ_{P} $ denotes the interpolated optimal transport map. Combining \eqref{eq:MA-t}, \eqref{eq:MA-0}, \eqref{eq:strong-linearization} and \eqref{eq:strong-linearization-2}, we derive that, for $t$ small enough and $z\in K$,  
\begin{equation}
    \label{Linearized-MA}
  \tr \Big(  \Big(\int_{0}^1 A_s(z)^{-1} \dd s \Big) \nabla^2 \xi_t (z)  \Big)  +   \left\langle \int_{0}^1 \frac{\nabla f_P(\BT_s(z) )}{ f_P(\BT_s(z) )} \dd s, \nabla \xi_t(z) \right\rangle = -\frac{\log(1-t)}{t}.
\end{equation}
Note that the left-hand side of \eqref{Linearized-MA} defines a strongly elliptic operator (of $\xi_t$) by \Cref{lemma:Interior-time-depend}.   
Fix $q \in \left(1,\; \frac{d(d+2)}{d^{2}+2d-4}\right)$ as in \Cref{Prop:Lq-estimate} and choose a family of constants  $\{m_t\}_{t\leq t_0}$ such that $\sup_{t\leq t_0} \|\xi_t-m_t\|_{L^q(\Omega_\mu)} < \infty$. Define $\tilde \xi_t = \xi_t-m_t$. Note that $\tilde\xi_t$ also satisfies~\eqref{Linearized-MA}. Then the local maximum principle \citep[Theorem~9.20]{GilbargTrudinger.Book} implies that for any ball $\BB_{2R}(z') \subset K $ with radius $2R$, 
$$ \sup_{\BB_{R}(z')} |\tilde \xi_t| \lesssim \|\tilde \xi_t\|_{L^q(\BB_{2R}(z'))} + \frac{|\log(1-t)|}{t} \lesssim \|\tilde \xi_t\|_{L^q(\BB_{2R}(z'))}  +1,$$
which, together with \Cref{Prop:Lq-estimate}, yields that 
\begin{equation}
    \label{eq:Estimate-Uniform-1}
     \sup_{\BB_{R}(z')} |\tilde \xi_t| \lesssim 1 .
\end{equation}
Furthermore, Schauder interior estimates \citep[Theorem~6.2]{GilbargTrudinger.Book} state  
$$ \|\tilde \xi_t\|_{\cC^{2,\alpha}(\BB_{R/2}(z')}) \lesssim \sup_{\BB_{R}(z')} |\tilde \xi_t| +\frac{\left|\log(1-t)\right|}{t} \lesssim 1.$$
In view of \eqref{eq:Estimate-Uniform-1} and the compactness of $K$, we conclude the proof.
\end{proof}

\subsection{Proof of~\Cref{Thm:InfluenceQuantiles-main}}\label{sec:proof-main}
Fix $d\geq 3$. When $d=2$, a similar argument as below shows the statement for any $q\in (1,2)$. Since $ \frac{d(d+2)}{d^{2}+2d-4} = 2$ when $d=2$, we conclude the theorem for all $d\geq 2$.

 To show \Cref{Thm:InfluenceQuantiles-main} when $d\geq 3$, we start by arguing that the limit exists in $L^q(\Omega_\mu)$ for $q \in \left(1, \frac{d(d+2)}{d^{2}+2d-4}\right)$.   First, \Cref{Prop:Lq-estimate} gives
\[
\sup_{t\in(0,t_0]}
\inf_{a\in\R}
\|\xi_t-a\|_{L^q(\Omega_\mu)}
<\infty.
\]
We choose the representative $m_t:=\int_{\Omega_\mu}\xi_t\,d\mu$ and set $\widetilde\xi_t:=\xi_t-m_t.$ 
Then $\int_{\Omega_\mu}\widetilde\xi_t\,d\mu=0.$ 
Moreover, since $\mu$ is a probability measure, for every $a\in\R$,
\[
|m_t-a|
=
\left|
\int_{\Omega_\mu}(\xi_t-a)\,d\mu
\right|
\leq
\|\xi_t-a\|_{L^q(\mu)}.
\]
Using the equivalence of the $L^q(\mu)$ and
$L^q(\Omega_\mu)$ norms, we obtain
\[
\|\widetilde\xi_t\|_{L^q(\Omega_\mu)}
\leq
C\inf_{a\in\R}
\|\xi_t-a\|_{L^q(\Omega_\mu)}.
\]
Consequently,
\[
\sup_{t\in(0,t_0]}
\|\widetilde\xi_t\|_{L^q(\Omega_\mu)}
<\infty,
\]
and every $L^q$ limit of $\widetilde\xi_t$ satisfies the normalization $\int_{\Omega_\mu}G_{x_0}\,d\mu=0.$ 
It remains to verify the translation condition in the
Kolmogorov--Riesz--Fr\'echet theorem
\cite[Theorem~4.26]{brezis2011functional}
to show that the sequence $\{\xi_t-m_t\}_{t\leq t_0}$ is relatively compact in $L^q(\Omega_\mu)$. 
For convenience, we extend $\widetilde\xi_t$ by zero to
$\mathbb R^d\setminus\Omega_\mu$. We will prove that
\begin{equation}\label{eq:lq-uniform-continuity}
\begin{aligned}
\lim_{\|h\|\to0}\sup_{t\in(0,t_0]}
\Bigg\{
&\underbrace{
\int_{(\Omega_\mu-h)\cap\Omega_\mu}
|\widetilde\xi_t(z+h)-\widetilde\xi_t(z)|^q\,\dd z
}_{=:I_1(t,h)}
\\
&+
\underbrace{
\int_{\Omega_\mu\setminus(\Omega_\mu-h)}
|\widetilde\xi_t(z)|^q\,\dd z
}_{=:I_2(t,h)}
\\
&+
\underbrace{
\int_{(\Omega_\mu-h)\setminus\Omega_\mu}
|\widetilde\xi_t(z+h)|^q\,\dd z
}_{=:I_3(t,h)}
\Bigg\}
=0.
\end{aligned}
\end{equation}
Indeed, since $\widetilde\xi_t$ vanishes outside $\Omega_\mu$, the
three terms in \eqref{eq:lq-uniform-continuity} give the exact
decomposition
\[
\begin{aligned}
\int_{\mathbb R^d}
|\widetilde\xi_t(z+h)-\widetilde\xi_t(z)|^q\,\dd z
={}&
I_1(t,h)+I_2(t,h)+I_3(t,h).
\end{aligned}
\]
The remaining region, on which both $z$ and $z+h$ lie outside
$\Omega_\mu$, contributes zero. 
Here
\[
\Omega_\mu-h:=\{z-h:\ z\in\Omega_\mu\},
\]
and $t_0$ is as in \Cref{Prop:Lq-estimate} for a slightly larger
$\widetilde q\in
\left(q,\frac{d(d+2)}{d^2+2d-4}\right)$.
By \Cref{Prop:Lq-estimate} and H\"older's inequality with exponent
$\widetilde q$, we have
\begin{align}
\label{eq:I_2}
\sup_{t\in(0,t_0]}I_2(t,h)
\leq &
\mathcal L^d\!\left(
\Omega_\mu\setminus(\Omega_\mu-h)
\right)^l
\sup_{t\in(0,t_0]}
\left(
\int_{\Omega_\mu}
|\widetilde\xi_t(z)|^{\widetilde q}\,\dd z
\right)^{q/\widetilde q},
\end{align}
where $l:=1-\frac{q}{\widetilde q}>0.$ 
The third term satisfies the same estimate with $h$ replaced by
$-h$. Indeed, after the change of variables $y=z+h$,
\[
\begin{aligned}
I_3(t,h)
&=
\int_{\Omega_\mu\setminus(\Omega_\mu+h)}
|\widetilde\xi_t(y)|^q\,\dd y.
\end{aligned}
\]
Consequently,
\begin{align}
\label{eq:I_3}
\sup_{t\in(0,t_0]}I_3(t,h)
\leq{}&
\mathcal L^d\!\left(
\Omega_\mu\setminus(\Omega_\mu+h)
\right)^l
\sup_{t\in(0,t_0]}
\left(
\int_{\Omega_\mu}
|\widetilde\xi_t(y)|^{\widetilde q}\,\dd y
\right)^{q/\widetilde q}.
\end{align}
Since $\mathbf 1_{\Omega_\mu}(\cdot+h)\to
\mathbf 1_{\Omega_\mu}$ in $L^1(\mathbb R^d)$ as $\|h\|\to0$, we have
\[
\mathcal L^d\!\left(
\Omega_\mu\setminus(\Omega_\mu-h)
\right)
+
\mathcal L^d\!\left(
\Omega_\mu\setminus(\Omega_\mu+h)
\right)
\longrightarrow0.
\]
Thus, both $I_2$ and $I_3$ converge to zero uniformly in
$t\in(0,t_0]$.  It remains to control $I_1(t,h)$. As we showed in
\Cref{prop:Example-uniform}, the integrand may be ill-behaved in a
neighborhood of the perturbation point $\mathbf F_P(x_0)$. To resolve
this issue, we separate $I_1(t,h)$ into a contribution away from the
boundary and the pole, and a contribution near these sets. Define
\[
\Omega_\mu^\epsilon
:=
\left\{
x\in\Omega_\mu:
\operatorname{dist}(x,\partial\Omega_\mu)>\epsilon
\ \text{and}\
\|x-\mathbf F_P(x_0)\|>\epsilon
\right\}.
\]
This is the set of points in $\Omega_\mu$ that are at least
$\epsilon$ away from both $\partial\Omega_\mu$ and
$\mathbf F_P(x_0)$. Given $\beta>0$, choose
$\epsilon=\epsilon(\beta)>0$ such that $\mathcal L^d(\Omega_\mu\setminus\Omega_\mu^\epsilon)<\beta.$ 
Then
\begin{equation}\label{eq:lq-uniform-continuity-2}
\begin{aligned}
\int_{(\Omega_\mu-h)\cap\Omega_\mu}
|\widetilde\xi_t(z+h)-\widetilde\xi_t(z)|^q\,\dd z
&\leq
\underbrace{
\int_{(\Omega_\mu-h)\cap\Omega_\mu\cap\Omega_\mu^\epsilon}
|\widetilde\xi_t(z+h)-\widetilde\xi_t(z)|^q\,\dd z
}_{=:A(t,h)}
\\
&\qquad+
\underbrace{
\int_{((\Omega_\mu-h)\cap\Omega_\mu)\setminus\Omega_\mu^\epsilon}
|\widetilde\xi_t(z+h)-\widetilde\xi_t(z)|^q\,\dd z
}_{=:B(t,h)}.
\end{aligned}
\end{equation} 
Assume henceforth that $\|h\|<\frac{\epsilon}{2}.$ 
If $z\in\Omega_\mu^\epsilon$ and $z+h\in\Omega_\mu$, then, for every
$s\in[0,1]$,
$$ \operatorname{dist}(z+sh,\partial\Omega_\mu)
\geq
\operatorname{dist}(z,\partial\Omega_\mu)-s\|h\| >
\frac{\epsilon}{2},$$
and 
$$ \|z+sh-\mathbf F_P(x_0)\|
\geq
\|z-\mathbf F_P(x_0)\|-s\|h\|
>
\frac{\epsilon}{2}.$$
Thus, the whole line segment joining $z$ and $z+h$ is contained in
$\Omega_\mu^{\epsilon/2}$.
Since
\[
\overline{\Omega_\mu^{\epsilon/2}}
\Subset
\operatorname{int}(\Omega_\mu)
\setminus\{\mathbf F_P(x_0)\},
\]
\Cref{prop:Uniform-estimate}, applied on the enlarged interior region
$\Omega_\mu^{\epsilon/2}$, yields constants
$t_\epsilon,C_\epsilon>0$ such that
\[
\sup_{t\in(0,t_\epsilon]}
\|\nabla\widetilde\xi_t\|_
{L^\infty(\Omega_\mu^{\epsilon/2})}
\leq C_\epsilon.
\]
The fundamental theorem of calculus along the segment
$z+s h$, $s\in[0,1]$, therefore gives
\[
|\widetilde\xi_t(z+h)-\widetilde\xi_t(z)|
\leq C_\epsilon\|h\|
\]
for every
\[
z\in
(\Omega_\mu-h)\cap\Omega_\mu\cap\Omega_\mu^\epsilon,
\qquad
t\in(0,t_\epsilon].
\]
Consequently, 
\begin{equation}\label{eq:Ah-0}
\sup_{t\in(0,t_\epsilon]}A(t,h)
\leq
C_\epsilon\mathcal L^d(\Omega_\mu)\|h\|^q,
\qquad
\|h\|<\frac{\epsilon}{2}.
\end{equation}
If $t\in(t_\epsilon,t_0]$, note that
$\widetilde\xi_t$ is
$\operatorname{diam}(\Omega_P)/t$-Lipschitz. Indeed,
\[
\nabla\widetilde\xi_t
=
\nabla\xi_t
=
\frac{\mathbf Q_{P_t}-\mathbf Q_P}{t},
\]
and both $\mathbf Q_{P_t}$ and $\mathbf Q_P$ take values in
$\Omega_P$. Therefore,
\[
\|\nabla\widetilde\xi_t\|_{L^\infty(\Omega_\mu)}
\leq
\frac{\operatorname{diam}(\Omega_P)}{t}.
\]
Hence,
\[
\begin{aligned}
\sup_{t\in(t_\epsilon,t_0]}A(t,h)
&\leq
\sup_{t\in(t_\epsilon,t_0]}
\frac{\operatorname{diam}(\Omega_P)^q}{t^q}
\mathcal L^d(\Omega_\mu)\|h\|^q
\\
&\leq
\frac{\operatorname{diam}(\Omega_P)^q}{t_\epsilon^q}
\mathcal L^d(\Omega_\mu)\|h\|^q.
\end{aligned}
\]
Combining this estimate with \eqref{eq:Ah-0}, we obtain
\begin{equation}\label{eq:Ah}
\sup_{t\in(0,t_0]}A(t,h)
\leq
C_\epsilon\mathcal L^d(\Omega_\mu)\|h\|^q,
\qquad
\|h\|<\frac{\epsilon}{2},
\end{equation}
for a possibly different constant $C_\epsilon$. For the second term in
\eqref{eq:lq-uniform-continuity-2}, using $|a-b|^q\leq2^{q-1}(|a|^q+|b|^q),$ 
we obtain
\begin{align*}
B(t,h)
&\leq
2^{q-1}
\int_{((\Omega_\mu-h)\cap\Omega_\mu)
\setminus\Omega_\mu^\epsilon}
|\widetilde\xi_t(z+h)|^q\,\dd z
+
2^{q-1}
\int_{((\Omega_\mu-h)\cap\Omega_\mu)
\setminus\Omega_\mu^\epsilon}
|\widetilde\xi_t(z)|^q\,\dd z.
\end{align*}
After the change of variables $y=z+h$, the first integral equals
\[
\int_{E_h}|\widetilde\xi_t(y)|^q\,\dd y,
\qquad
E_h
:=
\left(
((\Omega_\mu-h)\cap\Omega_\mu)
\setminus\Omega_\mu^\epsilon
\right)+h.
\]
Moreover,
\[
\mathcal L^d(E_h)
=
\mathcal L^d\!\left(
((\Omega_\mu-h)\cap\Omega_\mu)
\setminus\Omega_\mu^\epsilon
\right)
\leq
\mathcal L^d(\Omega_\mu\setminus\Omega_\mu^\epsilon)
<\beta.
\]
Therefore, applying H\"older's inequality separately to the two
integrals and using \Cref{Prop:Lq-estimate}, we obtain
\begin{align*}
B(t,h)
&\leq
2^{q-1}\mathcal L^d(E_h)^l
\left(
\int_{\Omega_\mu}
|\widetilde\xi_t(y)|^{\widetilde q}\,\dd y
\right)^{q/\widetilde q}+
2^{q-1}
\mathcal L^d(\Omega_\mu\setminus\Omega_\mu^\epsilon)^l
\left(
\int_{\Omega_\mu}
|\widetilde\xi_t(z)|^{\widetilde q}\,\dd z
\right)^{q/\widetilde q} \lesssim\beta^l,
\end{align*}
uniformly for $t\in(0,t_0]$ and
$\|h\|<\epsilon/2$. Combining this estimate with
\eqref{eq:lq-uniform-continuity-2} and \eqref{eq:Ah}, we obtain
\[
\limsup_{\|h\|\to0}
\sup_{t\in(0,t_0]}
I_1(t,h)
\leq C\beta^l.
\]
Since $\beta>0$ is arbitrary, it follows that
\[
\lim_{\|h\|\to0}
\sup_{t\in(0,t_0]}I_1(t,h)=0.
\]
Combining this conclusion with \eqref{eq:I_2} and
\eqref{eq:I_3}, we obtain
\[
\lim_{\|h\|\to0}
\sup_{t\in(0,t_0]}
\int_{\mathbb R^d}
|\widetilde\xi_t(z+h)-\widetilde\xi_t(z)|^q\,\dd z
=0,
\]
which is precisely the translation condition required by the
Kolmogorov--Riesz--Fr\'echet theorem. 
Thus, $\{\widetilde\xi_t\}_{t\leq t_0}$ is relatively compact in
$L^q(\Omega_\mu)$. Therefore, there exists a subsequence
$t_k\downarrow0$ and a function $G_{x_0}\in L^q(\Omega_\mu)$ such that
\[
\widetilde\xi_{t_k}\longrightarrow G_{x_0}
\qquad\text{in }L^q(\Omega_\mu).
\]
Since, by \Cref{lemma:test-linearization},
\[
\lim_{t\downarrow0}
\left|
g_\gamma(\mathbf F_P(x_0))
-
\int_{\Omega_\mu}g_\gamma\,\dd\mu
+
\int_{\Omega_\mu}
{\rm div}\!\left(
f_\mu[\nabla\BQ_P]^{-1}\nabla g_\gamma
\right)
\widetilde\xi_t\,\dd z
\right|
=0,
\]
and
\[
{\rm div}\!\left(
f_\mu[\nabla\BQ_P]^{-1}\nabla g_\gamma
\right)\in L^p(\Omega_\mu),
\qquad
\frac1p+\frac1q=1,
\]
we may pass to the limit along the subsequence by H\"older's
inequality. Hence,
\begin{equation}
\label{eq:Solbable}
\int_{\Omega_\mu}
{\rm div}\!\left(
f_\mu[\nabla\BQ_P]^{-1}\nabla g_\gamma
\right)
G_{x_0}\,\dd z
=
\int_{\Omega_\mu}g_\gamma\,\dd\mu
-
g_\gamma(\mathbf F_P(x_0)).
\end{equation}
This holds for every $g_\gamma\in W^{2,p}(\Omega_\mu)$ satisfying $\int_{\Omega_\mu}g_\gamma\,\dd z=0$ 
and
\[
\left\langle
f_\mu[\nabla\BQ_P]^{-1}\nabla g_\gamma,
n_\mu
\right\rangle
=0
\qquad\text{on }\partial\Omega_\mu
\]
in the trace sense; see~\Cref{sec:solvability}.  It remains to prove uniqueness of solutions to~\eqref{eq:Solbable} in $L^q(\Omega_\mu)$ such that $\int_{\Omega_\mu} G_{x_0} \dd \mu =0$, from which we may conclude that 
\[
\tilde\xi_t \to G_{x_0}\quad {\rm in}\,\, L^q(\Omega_\mu).
\] 
Furthermore, \Cref{prop:Uniform-estimate} and the Arzel\`a--Ascoli theorem show that the convergence is also in $ \cC^{2,\alpha'}_{\rm Loc}({\rm int}(\Omega_\mu) \setminus \{\BF_P(x_0)\}) $ for any $0<\alpha'<\alpha$, and it follows that $G_{x_0}\in \cC^{2,\alpha}_{\rm Loc}({\rm int}(\Omega_\mu) \setminus \{\BF_P(x_0)\}) \cap L^q(\Omega_\mu)$.

We now present the following proposition, which establishes uniqueness of solutions to~\eqref{eq:Solbable}.
\begin{proposition}\label{prop:uniqueness}
Fix $q \in \left(1, \frac{d(d+2)}{d^{2}+2d-4}\right)$. There exists a unique solution $G_{x_0} \in L^q(\Omega_\mu)$ to \eqref{eq:Solbable} such that $\int_{\Omega_\mu} G_{x_0} \dd \mu =0$. 
\end{proposition}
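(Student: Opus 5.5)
Existence has already been obtained: along a subsequence, $\widetilde\xi_{t_k}\to G_{x_0}$ in $L^q(\Omega_\mu)$, and the limit satisfies \eqref{eq:Solbable} together with $\int_{\Omega_\mu}G_{x_0}\dd\mu=0$. So only uniqueness remains, and I would prove it by duality using the solvability theory for the Neumann operator $\BL$ in \Cref{sec:solvability}, specifically \Cref{lemma: existence and uniqueness linearized MA}. Let $G_1,G_2\in L^q(\Omega_\mu)$ be two solutions of \eqref{eq:Solbable}, both with $\mu$-mean zero, and set $w=G_1-G_2$. Subtracting the two identities gives
\[
\int_{\Omega_\mu} w\,{\rm div}\!\left(f_\mu[\nabla\BQ_P]^{-1}\nabla g\right)\dd z=0
\]
for every admissible $g\in W^{2,p}(\Omega_\mu)$, meaning $\int_{\Omega_\mu} g\dd z=0$ and the conormal condition $\langle f_\mu[\nabla\BQ_P]^{-1}\nabla g,n_\mu\rangle=0$ holds on $\partial\Omega_\mu$. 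Here $p=q/(q-1)$.

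The plan is to test this identity against a dual solution. Pick an arbitrary $\psi\in L^p(\Omega_\mu)$ with $\int_{\Omega_\mu}\psi\dd z=0$. By \Cref{lemma: existence and uniqueness linearized MA}, which is the result already used in the proof of \Cref{Prop:Lq-estimate} to define $h_\beta=\BL^{-1}(g_\beta,0)$, there is $h=\BL^{-1}(\psi,0)\in W^{2,p}(\Omega_\mu)$ with $\int h\dd z=0$. This $h$ satisfies ${\rm div}(f_\mu[\nabla\BQ_P]^{-1}\nabla h)=\psi$ in $\Omega_\mu$ and the homogeneous conormal condition on the boundary. In particular $h$ is an admissible test function, and the identity yields $\int_{\Omega_\mu} w\psi\dd z=0$. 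Since this holds for all mean-zero $\psi\in L^p(\Omega_\mu)$ and $L^q=(L^p)^*$, it follows that $w$ equals a constant $c$ almost everywhere. To see this, take $\psi=\phi-\mathcal L^d(\Omega_\mu)^{-1}\int\phi$ for arbitrary $\phi\in L^p$; this gives $\int w\phi=\bar w\int\phi$, where $\bar w$ is the Lebesgue average of $w$. Finally, $\int w\dd\mu=0$ together with $\mu(\Omega_\mu)=1$ forces $c=0$, so $G_1=G_2$.

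The main point to check is that the class of test functions in \eqref{eq:Solbable} matches the range of $\BL^{-1}$. The normalization $\int g\dd z=0$ is harmless: the left side of \eqref{eq:Solbable} and the right side $\int g\dd\mu-g(\BF_P(x_0))$ are both invariant under adding constants to $g$. The exponent must also be compatible. Since $q<\frac{d(d+2)}{d^2+2d-4}$, we have $p>d$, and the $W^{2,p}$ solvability holds for every finite $p$ because the coefficients $f_\mu[\nabla\BQ_P]^{-1}$ lie in $\cC^{1,\alpha}$ by \Cref{thm:Cafarrelli} and $\partial\Omega_\mu\in\cC^{2,1}$. If \Cref{lemma: existence and uniqueness linearized MA} is stated only for this range of $p$, that is sufficient. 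Once the solution is unique, every subsequential limit of $\widetilde\xi_t$ coincides with $G_{x_0}$, which upgrades the subsequential convergence to convergence of the full family, as claimed in \Cref{Thm:InfluenceQuantiles-main}.
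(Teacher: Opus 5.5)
Your proposal is correct and is essentially the paper's argument: uniqueness by duality, using $\BL^{-1}(\psi,0)$ from \Cref{lemma: existence and uniqueness linearized MA} as an admissible test function in the homogeneous version of \eqref{eq:Solbable}, followed by the $\mu$-mean normalization to remove the constant. The only cosmetic difference is in how you conclude that $w$ is constant. You test against every mean-zero $\psi\in L^p(\Omega_\mu)$ and then use $L^p$--$L^q$ duality. The paper instead plugs in the single norming element $\gamma=\mathrm{sgn}(G-m)|G-m|^{q-1}/\|G-m\|_{L^q(\Omega_\mu)}^{q/p}$, with $m$ chosen so that $\gamma$ has zero Lebesgue mean, and reads off $\|G-m\|_{L^q(\Omega_\mu)}=0$ directly.
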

\begin{proof}
    Note that any limit point of the compact sequence $\{\xi_t-m_t\}_{t\leq t_0}$ belongs to $\cC^{2,\alpha}_{\rm Loc}({\rm int}(\Omega_\mu) \setminus \{\BF_P(x_0)\}) \cap  L^q(\Omega_\mu)$ and solves \eqref{eq:Solbable}. This shows the existence. To show the uniqueness, assume that $G$ solves the homogeneous equation 
$$  \int_{\Omega_\mu}  {\rm div} (f_\mu [\nabla\BQ_{P}]^{-1} \nabla g ) G\dd z=  0 $$
for every $g\in {W}^{2,p}(\Omega_\mu)$ (with $p$ being the H\"older conjugate of $q$) such that $\int_{\Omega_\mu} g\dd z = 0$ and $\inner{[\nabla \BQ_P]^{-1}\nabla g}{n_\mu}f_\mu = 0$ on $\partial\Omega_\mu$ in the trace sense. Define 
 $$  \gamma= \frac{{\rm sgn}(G-m) |G-m|^{q-1}}{\|G-m\|_{L^{q}(\Omega_\mu)}^{q/p}}  $$
where $m$ is such that 
$$ \int_{\Omega_\mu} {\rm sgn}(G-m) |G-m|^{q-1}\dd z=0.$$ When $\|G-m\|_{L^{q}(\Omega_\mu)}=0$, the problem is trivial. Then $\gamma\in L^p(\Omega_\mu)$ and $\int_{\Omega_\mu}\gamma \dd z=0$. Therefore, $g_\gamma =\BL^{-1}(\gamma,0)\in W^{2,p}(\Omega_\mu)$ (recall that $\BL$ is defined in \eqref{eq:operator-L}) such that $\int_{\Omega_\mu} g_\gamma \dd z=0$ satisfies 
\[
 \|G- m\|_{L^q(\Omega_\mu)} = \int_{\Omega_\mu}  {\rm div} (f_\mu [\nabla\BQ_{P}]^{-1} \nabla g_\gamma ) G\dd z=  0\,.
\]
This concludes the uniqueness (up to constant) in $L^q(\Omega_\mu)$. The normalization $\int_{\Omega_\mu} G_{x_0} \dd \mu =0$ gives the uniqueness.  
\end{proof} 
We conclude this section by the following lemma, which collects auxiliary results for the proof of~\Cref{thm:Unbounded-IF}.
\begin{lemma}\label{lemma:Lp-estimates-of-G}
Under the setting of \Cref{Thm:InfluenceQuantiles-main}, the following hold.
\begin{enumerate}[label = (\roman*)]
\item  If $z_0:=\mathbf F_P(x_0)\in {\rm int}(\Omega_\mu)$, the limit $G_{x_0}$ solves  
\[
{\rm div} (f_\mu [\nabla\BQ_{P}]^{-1} \nabla u )=\mu -\delta_{z_0}
\]
in the distributional sense.  If $z_0\in\partial \Omega_\mu $, the limit $G_{x_0}$ solves  
\[
{\rm div} (f_\mu [\nabla\BQ_{P}]^{-1} \nabla u )=\mu \qquad \text{in}\ {\rm int}(\Omega_\mu)
\]
in the distributional sense. 
    \item For every
$p\in\left(1,\frac{d}{d-2}\right),$
there exists a constant $C_p>0$, independent of $x_0$, such that
\[
\|G_{x_0}\|_{L^p(\Omega_\mu)}\leq C_p.
\]
\end{enumerate}
\end{lemma}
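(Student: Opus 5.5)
For part (i), I would start from the weak identity \eqref{eq:Solbable} satisfied by $G_{x_0}$ and specialise it to compactly supported test functions. Take $\varphi\in \cC_c^\infty({\rm int}(\Omega_\mu))$. Such a $\varphi$ lies in $W^{2,p}(\Omega_\mu)$ and satisfies the Neumann trace condition trivially. It need not have zero Lebesgue mean. That does not matter, because both sides of \eqref{eq:Solbable} are unchanged by adding a constant to $\varphi$: the divergence term kills constants, and the right-hand side $\int\varphi\,\dd\mu-\varphi(z_0)$ is invariant under constant shifts since $\mu$ is a probability measure. Hence for every such $\varphi$,
\[
\int G_{x_0}\,{\rm div}\bigl(f_\mu[\nabla\BQ_P]^{-1}\nabla\varphi\bigr)\dd z=\int\varphi\,\dd\mu-\varphi(z_0).
\]
The coefficient matrix $f_\mu[\nabla\BQ_P]^{-1}$ is symmetric and $\cC^{1,\alpha}$ by \Cref{thm:Cafarrelli}. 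The left-hand side is therefore the distributional pairing $\langle{\rm div}(f_\mu[\nabla\BQ_P]^{-1}\nabla G_{x_0}),\varphi\rangle$, the operator being formally self-adjoint. This gives ${\rm div}(\cdots)=\mu-\delta_{z_0}$ when $z_0$ is interior. When $z_0\in\partial\Omega_\mu$, we have $\varphi(z_0)=0$ for every compactly supported $\varphi$, which yields the second equation in ${\rm int}(\Omega_\mu)$.

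For part (ii), I would use duality, mirroring the proof of \Cref{Prop:Lq-estimate} but with the limiting identity in place of the linearisation error. Fix $p<d/(d-2)$ and let $p'$ be its conjugate, so that $p'>d/2$. Take $\gamma={\rm sgn}(G_{x_0}-m)|G_{x_0}-m|^{p-1}/\|G_{x_0}-m\|^{p/p'}_{L^p}$, with $m$ chosen so that $\gamma$ has zero mean. Then $\|\gamma\|_{L^{p'}}=1$. Set $g=\BL^{-1}(\gamma,0)$. By \Cref{lemma: existence and uniqueness linearized MA}, $\|g\|_{W^{2,p'}}\le C$ with $C$ depending only on the regularity data and not on $x_0$. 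Since $p'>d/2$, the Sobolev–Morrey embedding $W^{2,p'}\hookrightarrow \cC^{0}(\overline{\Omega_\mu})$ gives $\|g\|_{L^\infty}\le C$. Plugging $g$ into the weak identity then yields
\[
\|G_{x_0}-m\|_{L^p}=\Bigl|\int g\,\dd\mu-g(z_0)\Bigr|\le 2\|g\|_{L^\infty}\le C,
\]
uniformly in $x_0$.

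Two points need care, and the first is the main obstacle. The weak identity was obtained only for $q<d(d+2)/(d^2+2d-4)$, so I must first confirm that $G_{x_0}\in L^p$ and that testing with $g\in W^{2,p'}$ is legitimate for the larger range of $p$. I would handle this by approximation. Replace $\gamma$ by truncations $\gamma_k$ of $|G_{x_0}-m|^{p-1}$, recentred to have zero mean. Each $\gamma_k$ is bounded, so $g_k=\BL^{-1}(\gamma_k,0)$ lies in every $W^{2,r}$ and is an admissible test function. This gives a bound on $\int\gamma_k(G_{x_0}-m)$ that is uniform in $k$, and monotone convergence then shows $G_{x_0}\in L^p$ with the stated bound.

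The second point is removing $m$ to match the normalisation $\int G_{x_0}\,\dd\mu=0$. Here I would use $|m|=|\int(G_{x_0}-m)\,\dd\mu|\le \Lambda\,\|G_{x_0}-m\|_{L^1}\lesssim\|G_{x_0}-m\|_{L^p}$, as in the proof of \Cref{Thm:InfluenceQuantiles-main}. The boundary case $z_0\in\partial\Omega_\mu$ needs no separate treatment, since $g$ is continuous up to the boundary.
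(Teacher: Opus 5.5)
Your proposal is correct and follows essentially the same route as the paper. Part (i) tests the weak identity with compactly supported smooth functions, using invariance under constant shifts. Part (ii) is a duality argument: you test against $\BL^{-1}(h,0)$ with $h\in L^{p'}$, $p'>d/2$, use the $W^{2,p'}$ a priori estimate for $\BL$ and the embedding into continuous functions to get $\bigl|\int hG_{x_0}\,\dd z\bigr|\le 2\|g_h\|_{L^\infty}\lesssim\|h\|_{L^{p'}}$ uniformly in $x_0$, and remove the constant via $\int G_{x_0}\,\dd\mu=0$.

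The only difference is how the duality is closed. The paper extends the functional $h\mapsto\int hG_{x_0}\,\dd z$ from a dense subspace of admissible $h$ and applies the Riesz representation theorem, while your truncation-plus-monotone-convergence argument does the same job. In your version, take $m$ to be the Lebesgue mean of $G_{x_0}$; then $\int(G_{x_0}-m)\,\dd z=0$, so recentring the truncated $\gamma_k$ costs nothing.
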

\begin{proof}
\textit{(i)}. Recall that the potential $G_{x_0}$ of the influence function at $x_0$  solves~\eqref{eq:Solbable}, i.e.,
\begin{equation}\label{eq:transposition-G-boundary}
  \int_{\Omega_\mu}  {\rm div} (f_\mu [\nabla\BQ_{P}]^{-1} \nabla g ) G_{x_0}\dd z=\int_{\Omega_\mu} g \dd(\mu -\delta_{z_0}),
\end{equation}
for all $g\in {W}^{2,p_1}(\Omega_\mu)$, with  
 $p_1>\frac{d(d+2)}{4}$, such that $\int_{\Omega_\mu} g \dd z=0$ and $\langle f_\mu[\nabla\mathbf Q_P]^{-1} \nabla g,n_\mu\rangle=0$ on $\partial\Omega_\mu$. For $g \in \cC_c^\infty(\Omega_\mu)$, we may subtract a constant to obtain $\tilde g$ such that $\int_{\Omega_\mu} \tilde g \dd z=0$. Then
 \begin{align*}
     \int_{\Omega_\mu}  {\rm div} (f_\mu [\nabla\BQ_{P}]^{-1} \nabla g ) G_{x_0}\dd z
     &=  \int_{\Omega_\mu}  {\rm div} (f_\mu [\nabla\BQ_{P}]^{-1} \nabla \tilde g ) G_{x_0}\dd z\\
     &= \int_{\Omega_\mu} \tilde g \dd(\mu -\delta_{z_0})
     = \int_{\Omega_\mu} g \dd(\mu -\delta_{z_0}).
 \end{align*}
Thus, by the definition of the distributional divergence,
 \[
 \int_{\Omega_\mu} {\rm div} (f_\mu [\nabla\BQ_{P}]^{-1} \nabla G_{x_0} ) g \dd z=\int_{\Omega_\mu} g \dd (\mu -\delta_{z_0}) .
 \]
 In other words, $G_{x_0}$ solves, in the distributional sense,
\[
    \BL_1(u):= {\rm div} (f_\mu [\nabla\BQ_{P}]^{-1} \nabla u )=\mu -\delta_{z_0} 
\]
if $z_0\in {\rm int}(\Omega_\mu)$,  and 
\[
{\rm div} (f_\mu [\nabla\BQ_{P}]^{-1} \nabla u )=\mu \qquad \text{in}\ {\rm int}(\Omega_\mu)
\]
otherwise. 
 This concludes  the proof of~(i).

\textit{(ii)}.
 Fix an exponent $p_2$ such that $\frac d2<p_2<p_1,$ 
and define
\[
L^{p_2}_0(\Omega_\mu)
:=
\left\{
h\in L^{p_2}(\Omega_\mu):
\int_{\Omega_\mu}h\,\dd z=0
\right\}.
\]
For every $h\in L^{p_1}_0(\Omega_\mu)$, let $g_h\in W^{2,p_1}(\Omega_\mu)$ be the unique
weak solution of
\[
\begin{cases}
{\rm div}(f_\mu[\nabla\mathbf Q_P]^{-1}\nabla g_h)=h
&\text{in }{\rm int}(\Omega_\mu),\\
\langle f_\mu[\nabla\mathbf Q_P]^{-1}\nabla g_h,n_\mu\rangle=0
&\text{on }\partial\Omega_\mu,\\
\displaystyle\int_{\Omega_\mu}g_h\,\dd z=0.
\end{cases}
\]
As $p_2<p_1$, $h\in L^{p_2}_0(\Omega_\mu)$ as well. By \Cref{lemma: existence and uniqueness linearized MA}, $\|g_h\|_{W^{2,p_2}(\Omega_\mu)}
\leq C\|h\|_{L^{p_2}(\Omega_\mu)}.$ 
Since $p_2>d/2$, the Sobolev embedding gives $W^{2,p_2}(\Omega_\mu)
\hookrightarrow C({\Omega_\mu}),$ 
and hence $\|g_h\|_{L^\infty(\Omega_\mu)}
\leq C\|h\|_{L^{p_2}(\Omega_\mu)}.$ 
Using \eqref{eq:transposition-G-boundary}, we obtain
\begin{align*}
\left|
\int_{\Omega_\mu}hG_{x_0}\,\dd z
\right|
&=
\left|
\int_{\Omega_\mu}g_h\,\dd\mu-g_h(z_0)
\right|\leq
2\|g_h\|_{L^\infty(\Omega_\mu)}\leq
C\|h\|_{L^{p_2}(\Omega_\mu)}.
\end{align*}
The constant is independent of
$z_0\in {\Omega_\mu}$. Because $p_1>p_2$ and $\Omega_\mu$ is bounded,
$L^{p_1}_0(\Omega_\mu)$ is dense in
$L^{p_2}_0(\Omega_\mu)$. Therefore, the functional $h\longmapsto
\int_{\Omega_\mu}hG_{x_0}\,\dd z$ 
admits a unique continuous extension to
$L^{p_2}_0(\Omega_\mu)$. Define
\[
\widetilde G_{x_0}
:=
G_{x_0}
-
\frac{1}{\mathcal L^d(\Omega_\mu)}
\int_{\Omega_\mu}G_{x_0}\,\dd z.
\]
For every $h\in L^{p_1}_0(\Omega_\mu)$,
\[
\int_{\Omega_\mu}h\widetilde G_{x_0}\,\dd z
=
\int_{\Omega_\mu}hG_{x_0}\,\dd z.
\]
For $h\in L^{p_2}(\Omega_\mu)$, set
\[
\Theta(h)
:=
\Theta_0\left(
h-
\frac{1}{\mathcal L^d(\Omega_\mu)}
\int_{\Omega_\mu}h\,\dd z
\right),
\]
where $\Theta_0$ denotes the extension to
$L^{p_2}_0(\Omega_\mu)$. Since the mean-zero projection is bounded
on $L^{p_2}(\Omega_\mu)$, we have
$|\Theta(h)|
\leq C\|h\|_{L^{p_2}(\Omega_\mu)}.$ 
Thus $\Theta\in\bigl(L^{p_2}(\Omega_\mu)\bigr)'.$
Let $q_2$ be the conjugate exponent of $p_2$. By the representation
theorem for the dual of $L^{p_2}$ \citep[Theorem~4.11]{brezis2011functional}, there exists
$H\in L^{q_2}(\Omega_\mu)$ such that
\[
\Theta(h)=\int_{\Omega_\mu}hH\,\dd z
\]
for every $h\in L^{p_2}(\Omega_\mu)$, with $\|H\|_{L^{q_2}(\Omega_\mu)}\leq C.$ 
For every $h\in L^{p_1}(\Omega_\mu)$,
\[
\int_{\Omega_\mu}hH\,\dd z
=
\int_{\Omega_\mu}h\widetilde G_{x_0}\,\dd z.
\]
Hence $\widetilde G_{x_0}=H$, a.e. in  $\Omega_\mu$ 
and therefore $\|\widetilde G_{x_0}\|_{L^{q_2}(\Omega_\mu)}
\leq C.$  It remains to control the constant removed from $G_{x_0}$. Since $\int_{\Omega_\mu}G_{x_0}\,\dd\mu=0,$ 
we have
\[
0
=
\int_{\Omega_\mu}\widetilde G_{x_0}\,\dd\mu
+
\frac{1}{\mathcal L^d(\Omega_\mu)}
\int_{\Omega_\mu}G_{x_0}\,\dd z.
\]
Consequently,
\[
\left|
\frac{1}{\mathcal L^d(\Omega_\mu)}
\int_{\Omega_\mu}G_{x_0}\,\dd z
\right|
\leq
\|f_\mu\|_{L^{p_2}(\Omega_\mu)}
\|\widetilde G_{x_0}\|_{L^{q_2}(\Omega_\mu)}
\leq C.
\]
It follows that $\|G_{x_0}\|_{L^{q_2}(\Omega_\mu)}
\leq C,$ 
uniformly in $x_0$. When $d>2$, the condition $p_2>d/2$ is equivalent to
$q_2<\frac{d}{d-2}.$ 
When $d=2$, every finite $q_2>1$ can be obtained. In particular,
\[
\|G_{x_0}\|_{L^p(\Omega_\mu)}
\leq C_p,
\qquad
1<p<\frac{d}{d-2}=+\infty,
\]
with a constant $C_p$ independent of $x_0$.
\end{proof}
\subsection{Proof of~\Cref{thm:Unbounded-IF}}\label{sec:proof-unboundedness}

We will first establish the local statement~(ii), which will be used to prove~(i).
To avoid confusion in notation, we fix a point of evaluation
$z^*\in {\rm int}(\Omega_\mu)$ instead of $z$ as in the statement.
Pick $z_0=\BF_P(x_0)\in\Omega_\mu$ and set
\[
R:=\|z_0-z^*\|,
\qquad
\delta:={\rm dist}(z^*,\partial\Omega_\mu).
\]
Fix $R_0\in(0,\min\{\delta/2,1\}]$, to be decreased below if necessary,
and assume throughout the proof of~(ii) that $0<R\leq R_0.$ 
Then
\[
\operatorname{dist}(z_0,\partial\Omega_\mu)
\geq
\operatorname{dist}(z^*,\partial\Omega_\mu)-\|z_0-z^*\|
=
\delta-R
\geq
\frac{\delta}{2}.
\]
Thus, throughout the proof of~(ii), $z_0$ is an interior point of
$\Omega_\mu$, and the equation for $G_{x_0}$ has an interior Dirac
source at $z_0$.
In this proof, we denote by $C$ a constant that does not depend on $R$
and may change from line to line. Similarly, the notation $\lesssim$
hides a constant that does not depend on $R$. These constants may depend
on $\delta$. 

\textit{Step~1:} Write
$A_{0}:=f_\mu(z_0)[\nabla \BQ_P(z_0)]^{-1}.$
Define the operator
$\BL^{0}_1(u)=\operatorname{div}(A_0\nabla u).$
A weak solution $\BL^{0}_1(u)=-\delta_{z_0}$ is
\cite[see, e.g.,][Section~2.2.1]{Evans-PDE}
\begin{equation}\label{eq:Phi-x0}
    \Phi_{x_0}(z)=
\begin{cases}
\displaystyle
\frac{1}{(d-2)\,\cH^{d-1}(\cS^{d-1})}\,
\frac{1}{\sqrt{\det(A_0)}}\,
\big( \langle  z-z_0, A_0^{-1} ( z-z_0)\rangle \big)^{\frac{2-d}{2}},&
d\ge3, \\[1.4em]
\displaystyle
-\frac{1}{4\pi\sqrt{\det(A_0)}}\,
\log\!\big( \langle z-z_0, A_0^{-1} ( z-z_0)\rangle \big),
&d=2 .
\end{cases}
\end{equation}
On the other hand, \Cref{lemma:Lp-estimates-of-G}~(i) states that
$G_{x_0}$ solves
\[
{\rm div} (f_\mu [\nabla\BQ_{P}]^{-1} \nabla u )
=
\mu-\delta_{z_0}
\]
in the distributional sense. Therefore, $v:=G_{x_0}-\Phi_{x_0}$ solves
\[
\BL_1(v)
=
\BL_1(G_{x_0}-\Phi_{x_0})
=
f_\mu-[\BL_1-\BL^0_1](\Phi_{x_0})
=
f_\mu-\diver{
[f_\mu[\nabla \BQ_P]^{-1}-A_0]\nabla \Phi_{x_0}
}
=:f_0
\]
in the distributional sense, where $\BL_1$ is strictly elliptic
by~\Cref{thm:Cafarrelli}. We proceed with controlling the regularity of
$v$. In particular, we will show that
\begin{equation}\label{eq:gradient-v-infinity}
    \|\nabla v(z^*)\|
    \lesssim
    \frac{1}{R^{d-1-\alpha}}.
\end{equation}
This, combined with the fact that
$\|\nabla\Phi_{x_0}(z)\|
\asymp\|z-z_0\|^{-(d-1)}$, yields that
\[
\|\nabla G_{x_0}(z^*)\|
\leq
\|\nabla\Phi_{x_0}(z^*)\|
+
\|\nabla v(z^*)\|
\lesssim
\frac{1}{R^{d-1}}
+
\frac{1}{R^{d-1-\alpha}}
\lesssim
\frac{1}{R^{d-1}}.
\]
Moreover, there exist constants $c,C>0$, independent of $R$, such that
\[
\begin{aligned}
\|\nabla G_{x_0}(z^*)\|
&\geq
\|\nabla\Phi_{x_0}(z^*)\|
-
\|\nabla v(z^*)\|
\\
&\geq
\frac{c}{R^{d-1}}
-
\frac{C}{R^{d-1-\alpha}}
=
\frac{c-CR^\alpha}{R^{d-1}}.
\end{aligned}
\]
By decreasing $R_0$ if necessary so that
$CR_0^\alpha\leq c/2$, we obtain
\[
\|\nabla G_{x_0}(z^*)\|
\geq
\frac{c}{2R^{d-1}}.
\]
In view of $R=\|z^*-z_0\|$, we arrive at the desired claim. The rest of the proof is devoted to
establishing~\eqref{eq:gradient-v-infinity}.

\textit{Step~2.1:} We will first focus on the case when $d\geq3$.
To that end, we proceed with showing that
\[
\|v\|_{L^\infty(\BB_{R/4}(z^*))}
\lesssim
\frac{1}{R^{d-2-\alpha}}.
\]
Recall $A_0=(f_\mu[\nabla\BQ_P]^{-1})(z_0)$. It follows by direct
computation that
\begin{equation}\label{eq:f_0}
    \begin{aligned}
    f_0
    =&\,
    f_\mu
    -\Bigl(
    \inner{\nabla f_\mu}
    {[\nabla \BQ_P]^{-1}\nabla \Phi_{x_0}}
    +
    f_\mu
    \inner{\diver{[\nabla\BQ_P]^{-1}}}
    {\nabla \Phi_{x_0}}
    \\
    &\hspace{7cm}
    +
    \tr\bigl(
    [f_\mu[\nabla\BQ_P]^{-1}-A_0]
    \nabla^2\Phi_{x_0}
    \bigr)
    \Bigr),
    \end{aligned}
\end{equation}
where the divergence of a matrix is to be understood row-wise.
Since $f_\mu\in\cC^{1,\alpha}(\Omega_\mu)$ and
$[\nabla\BQ_P]^{-1}\in\cC^{1,\alpha}(\Omega_\mu)$
by~\Cref{thm:Cafarrelli}, we know that
$f_\mu[\nabla\BQ_P]^{-1}\in\cC^{1,\alpha}(\Omega_\mu)$.
Hence, substituting \eqref{eq:Phi-x0} into \eqref{eq:f_0} gives
\[
|f_0(z)|
\lesssim
\frac{\|z-z_0\|}{\|z-z_0\|^d}
\lesssim
\frac{1}{\|z-z_0\|^{d-1}}
\lesssim
\frac{1}{R^{d-1}},
\]
for any $z\in\BB_{R/2}(z^*)$
(so that $\|z-z_0\|\geq R/2$).
Thus,
\begin{equation}\label{eq:f_0-d}
    \|f_0\|_{L^d(\BB_{R/2}(z^*))}
    \lesssim
    \frac{1}{R^{d-2}}.
\end{equation}
Note that $v$ solves $\BL_1(v)=f_0$ in $\Omega_\mu$ in the distributional sense.
Therefore, the local maximum principle
\cite[Theorem~9.20]{GilbargTrudinger.Book} yields, for $q>0$,
\begin{align}
    \|v\|_{L^\infty(\BB_{R/4}(z^*))}
    &\lesssim
    \frac{\|v\|_{L^q(\BB_{R/2}(z^*))}}{R^{\frac{d}{q}}}
    +
    R\|f_0\|_{L^d(\BB_{R/2}(z^*))}
    \notag
    \\
    &\lesssim
    \frac{\|v\|_{L^q(\BB_{R/2}(z^*))}}{R^{\frac{d}{q}}}
    +
    \frac{1}{R^{d-3}}.
    \label{eq:v}
\end{align}
Here we also used \eqref{eq:f_0-d}.

We now control $\|v\|_{L^q(\BB_{R/2}(z^*))}$.
Since, for $\eta\in(0,1)$,
\begin{align*}
\int_{\Omega_\mu}
|f_0(z)|^{\frac{d-\eta}{d-1}}
\dd z
&\lesssim
\int_{\Omega_\mu}
\frac{1}{\|z-z_0\|^{d-\eta}}
\dd z
\\
&\lesssim
\int_0^{{\rm diam}(\Omega_\mu)}
s^{-(1-\eta)}
\dd s
<\infty,
\end{align*}
we know that $\|f_0\|_{L^p(\Omega_\mu)}<\infty$ for
$p<\frac{d}{d-1}$. On the other hand,
\cite[Theorem~12.1]{Taira.2024.book} yields, for $p>1$,
\begin{equation}\label{eq:v-W2p}
\|v\|_{W^{2,p}(\mathbb{B}_{\delta/2}(z^*))}
\lesssim
\|v\|_{L^p(\Omega_\mu)}
+
\|f_0\|_{L^p(\Omega_\mu)}.
\end{equation}
Furthermore, \Cref{lemma:Lp-estimates-of-G}~(ii) shows that
$\|G_{x_0}\|_{L^q(\Omega_\mu)}\leq C$ for
$q<\frac{d}{d-1}$, and direct computation gives
$\|\Phi_{x_0}\|_{L^p(\Omega_\mu)}\leq C$ for
$p<\frac{d}{d-2}$.
Therefore, $v\in L^p(\Omega_\mu)$ for
$p\in(1,\frac{d}{d-1})$, and by~\eqref{eq:v-W2p}, we have
\begin{equation}
    \label{eq:bound-v-W-2-p}
    \|v\|_{W^{2,p}(\mathbb{B}_{\delta/2}(z^*))}
    \leq C,
    \qquad
    p\in\left(1,\frac{d}{d-1}\right).
\end{equation}
By the Sobolev embedding theorem
\cite[Corollary~9.15]{brezis2011functional}, we know that, for $d\geq3$,
\[
\|v\|_{L^q(\mathbb{B}_{\delta/2}(z^*))}
\lesssim
\|v\|_{W^{2,p}(\mathbb{B}_{\delta/2}(z^*))},
\qquad
q<\frac{dp}{d-2p},
\]
with the convention $1/0=\infty$. For $p<\frac{d}{d-1}$ and using
\eqref{eq:bound-v-W-2-p}, we conclude that
\[
\|v\|_{L^q(\BB_{R/2}(z^*))}
\leq C,
\qquad
q<\frac{d}{d-3},
\qquad
d\geq3.
\]
Recall that $\alpha\in(0,1)$. Taking
$q=\frac{d}{d-2-\alpha}$ in~\eqref{eq:v}, we then derive
by~\eqref{eq:f_0-d} that
\begin{equation}\label{eq:v_infty}
    \|v\|_{L^\infty(\BB_{R/4}(z^*))}
    \lesssim
    \frac{1}{R^{d-2-\alpha}}
    +
    \frac{1}{R^{d-3}}
    \lesssim
    \frac{1}{R^{d-2-\alpha}},
    \qquad d\geq3,
\end{equation}
where we used $R\leq R_0\leq1$ in the last inequality.

\textit{Step~2.2:}
We are now ready to prove~\eqref{eq:gradient-v-infinity} when $d\geq3$,
thus concluding the proof of~(ii). Applying Schauder's interior estimates
\cite[Theorem~6.2]{GilbargTrudinger.Book} yields that
\begin{equation}\label{eq:gradient-v}
    \frac{R}{8}
    \|\nabla v\|_{L^\infty(\BB_{R/8}(z^*))}
    \lesssim
    \|v\|_{L^\infty(\BB_{R/4}(z^*))}
    + R^2\|f_0\|_{L^\infty(\BB_{R/4}(z^*)} + R^{2+\alpha}
    [f_0]_{\cC^\alpha(\BB_{R/4}(z^*))}.
\end{equation}
By direct computation again, as in~\eqref{eq:f_0}, we have
\[
\|f_0\|_{L^\infty(\BB_{R/4}(z^*)}\lesssim \frac{1}{R^{d-1}}\quad {\rm and}\quad [f_0]_{\cC^\alpha(\BB_{R/4}(z^*))}
\lesssim
\frac{R^\alpha}{R^{d+\alpha}}
=
\frac{1}{R^d}.
\]
Combining this with~\eqref{eq:v_infty} and~\eqref{eq:gradient-v}
shows that
\[
\begin{aligned}
\|\nabla v\|_{L^\infty(\BB_{R/8}(z^*))}
&\lesssim
\frac{1}{R^{d-1-\alpha}}
+ R\frac{1}{R^{d-1}} + 
R^{1+\alpha}\frac{1}{R^d}
\\
&\lesssim
\frac{1}{R^{d-1-\alpha}}.
\end{aligned}
\]
This proves~\eqref{eq:gradient-v-infinity} when $d\geq3$.

\textit{Step~3:}
Finally, we establish~\eqref{eq:gradient-v-infinity} when $d=2$.
Redefine $\tilde v=v-v(z^*)$ and note that $\tilde v$ solves the same
equation and has the same gradient.
Choose
\[
p\in\left(\frac{2}{2-\alpha},2\right).
\]
Applying the Sobolev embedding theorem
\cite[Corollary~9.15]{brezis2011functional} and
\eqref{eq:bound-v-W-2-p} gives
\begin{equation}\label{eq:gradient-v-infinity-d=2}
    \|\tilde v\|_{\cC^{0,\alpha}(\BB_{\delta/2}(z^*))}
    \lesssim
    \|v\|_{W^{2,p}(\BB_{\delta/2}(z^*))}
    \leq C.
\end{equation}
 Since $\tilde v(z^*)=0$, we obtain
\[
\|\tilde v\|_{L^\infty(\BB_{R/4}(z^*))}
\leq
\frac{R^\alpha}{4^\alpha}
[\tilde v]_{\cC^{0,\alpha}(\BB_{\delta/2}(z^*))}
\lesssim
R^\alpha.
\]
Moreover, the preceding estimate for $f_0$, with $d=2$, gives
\[
\|f_0\|_{\cC^\alpha(\BB_{R/4}(z^*))}
\lesssim
\frac{1}{R^2}.
\]
Applying~\eqref{eq:gradient-v} to $\tilde v$ and recalling that
$\nabla\tilde v=\nabla v$, we obtain
\[
\begin{aligned}
\frac{R}{8}
\|\nabla v\|_{L^\infty(\BB_{R/8}(z^*))}
&\lesssim
R^\alpha
+
R^{2+\alpha}\frac{1}{R^2}\lesssim
R^\alpha.
\end{aligned}
\]
Consequently,
\[
\|\nabla v(z^*)\|
\lesssim
\frac{1}{R^{1-\alpha}},
\]
which proves~\eqref{eq:gradient-v-infinity} when $d=2$.
Finally, by the continuity of $\BF_P$ at $\BQ_P(z^*)$, the condition
$R\leq R_0$ holds whenever $x_0$ belongs to a sufficiently small
neighborhood of $\BQ_P(z^*)$. This completes the proof of~(ii).

Now we prove {\it (i)} by contradiction. Suppose not. Then there exists
a sequence $\{x_n\}_n\subset\Omega_P$ such that
\begin{equation}\label{eq:unbounded-IF}
    \|\BI(x_n;\BQ_P(z^*))\|
    \|z^*-\BF_P(x_n)\|^{d-1}
    \to\infty.
\end{equation}
As $\Omega_\mu$ is bounded and closed, the sequence
$z_n=\BF_P(x_n)\in\Omega_\mu$ admits a limit
$z_\infty\in\Omega_\mu$ along a subsequence. If $z_\infty=z^*$, then
\Cref{thm:Unbounded-IF}~(ii) yields a contradiction.
Hence $z_\infty\neq z^*$. Choose $s>0$ such that
\[
2s<
\min\bigl\{
\delta,\|z_\infty-z^*\|
\bigr\}.
\]
Then, for all sufficiently large $n$,
$\|z_n-z^*\|>s$. In particular,
$\BB_s(z^*)\Subset\Omega_\mu$ and
$\BB_s(z^*)\cap\{z_n\}=\emptyset$ for all sufficiently large $n$.
Thus,
\[
\BL_1(G_{x_n})
=
{\rm div}
\bigl(
f_\mu[\nabla\BQ_P]^{-1}\nabla G_{x_n}
\bigr)
=
f_\mu
\qquad\text{in }\BB_s(z^*).
\]
This interior equation is valid regardless of whether
$z_n=\BF_P(x_n)$ belongs to ${\rm int}(\Omega_\mu)$ or to
$\partial\Omega_\mu$. Indeed, the ball
$\BB_s(z^*)\Subset{\rm int}(\Omega_\mu)$ is separated from $z_n$.
When $z_n\in{\rm int}(\Omega_\mu)$, the interior Dirac source is
supported outside $\BB_s(z^*)$; when
$z_n\in\partial\Omega_\mu$, the singularity is at the boundary and does not affect the equation on this
interior ball. In both cases we can use \Cref{lemma:Lp-estimates-of-G}.     
By the local maximum principle
\cite[Theorem~9.20]{GilbargTrudinger.Book}, together with the uniform $L^p(\Omega_\mu)$ estimate from
\Cref{lemma:Lp-estimates-of-G}~(ii), we have
\[
\|G_{x_n}\|_{L^\infty(\BB_{s/2}(z^*))}
\lesssim
\|G_{x_n}\|_{L^p(\BB_s(z^*))}
+
\|f_\mu\|_{L^d(\BB_s(z^*))}
\lesssim
1,
\qquad
p\in\left(1,\frac{d}{d-1}\right).
\] 
Applying \cite[Corollary~6.3]{GilbargTrudinger.Book} yields
\begin{equation}\label{eq:gradient-G}
    \frac{s}{4}
    \|\nabla G_{x_n}\|_{L^\infty(\BB_{s/4}(z^*))}
    \lesssim
    \|G_{x_n}\|_{L^\infty(\BB_{s/2}(z^*))}
    +
    \|f_\mu\|_{\cC^\alpha(\Omega_\mu)}
    \lesssim
    1,
\end{equation}
where the underlying constant is independent of $n$. Since
$\BI(x_n;\BQ_P(z^*))=\nabla G_{x_n}(z^*)$ and
$\lim_n\|z_n-z^*\|=\|z_\infty-z^*\|$, we derive
by~\eqref{eq:gradient-G} that
\[
\limsup_n
\|\BI(x_n;\BQ_P(z^*))\|
\|z_n-z^*\|^{d-1}
\lesssim
\|z_\infty-z^*\|^{d-1}
<\infty.
\]
This contradicts~\eqref{eq:unbounded-IF}.

\subsection{Proof of~\Cref{prop:shape}}\label{sec:proof-shape}
We begin the proof with a few definitions. 
An \emph{ellipsoid} $\cE$ is defined as the image of the unit ball $\BB_1(0)$ under an affine transformation, i.e.,  $\cE = m + A(\BB_1(0))$ with $A$ symmetric and positive definite.  For $r > 0$, we write $r \cE$ as the \emph{dilation} of $\cE$ with center $m$, that is $r \cE = m + r A(\BB_1(0))$. Intuitively, the dilation of a set means keeping the center but changing the diameter of the set.  The \emph{barycenter} of a Borel set $S$ with nonempty interior is $\int_S z \dd z/\cL_d(S)$. 
\begin{figure}[!ht]
    \centering
\tikzset{every picture/.style={line width=0.75pt}} %

\begin{tikzpicture}[x=0.65pt,y=0.65pt,yscale=-1,xscale=1]
\draw  [color={rgb, 255:red, 245; green, 166; blue, 35 }  ,draw opacity=1 ][fill={rgb, 255:red, 245; green, 166; blue, 35 }  ,fill opacity=0.07 ] (188.97,270.51) .. controls (136.11,203.06) and (174.91,100.39) .. (275.63,41.2) .. controls (376.35,-17.98) and (500.85,-11.28) .. (553.71,56.18) .. controls (606.56,123.64) and (567.76,226.3) .. (467.04,285.49) .. controls (366.32,344.68) and (241.82,337.97) .. (188.97,270.51) -- cycle ;
\draw  [draw opacity=0] (225.44,350.73) .. controls (176.54,340.72) and (114.18,292.88) .. (65.22,222.77) .. controls (6.95,139.34) and (-13.44,51.44) .. (11.97,8.98) -- (151.68,168.04) -- cycle ; \draw   (225.44,350.73) .. controls (176.54,340.72) and (114.18,292.88) .. (65.22,222.77) .. controls (6.95,139.34) and (-13.44,51.44) .. (11.97,8.98) ;  
\draw  [draw opacity=0] (509.11,-3.78) .. controls (547.84,10.23) and (592.62,57.93) .. (623.53,123.08) .. controls (656.34,192.24) and (664.03,261.05) .. (646.64,298.8) -- (545.55,160.08) -- cycle ; \draw   (509.11,-3.78) .. controls (547.84,10.23) and (592.62,57.93) .. (623.53,123.08) .. controls (656.34,192.24) and (664.03,261.05) .. (646.64,298.8) ;  
\draw  [fill={rgb, 255:red, 0; green, 0; blue, 0 }  ,fill opacity=1 ] (371.34,163.35) .. controls (371.34,161.81) and (372.85,160.57) .. (374.71,160.57) .. controls (376.57,160.57) and (378.09,161.81) .. (378.09,163.35) .. controls (378.09,164.88) and (376.57,166.12) .. (374.71,166.12) .. controls (372.85,166.12) and (371.34,164.88) .. (371.34,163.35) -- cycle ;
\draw  [color={rgb, 255:red, 65; green, 117; blue, 5 }  ,draw opacity=1 ][fill={rgb, 255:red, 220; green, 248; blue, 191 }  ,fill opacity=0.25 ] (256.38,81) .. controls (270.97,58) and (475.23,11) .. (504.41,46) .. controls (533.59,81) and (534.81,195) .. (501.98,232) .. controls (469.15,269) and (298.77,300.48) .. (264.9,291) .. controls (231.03,281.52) and (241.79,104) .. (256.38,81) -- cycle ;
\draw  [color={rgb, 255:red, 144; green, 19; blue, 254 }  ,draw opacity=1 ][fill={rgb, 255:red, 182; green, 107; blue, 248 }  ,fill opacity=0.08 ] (279.98,216.12) .. controls (256.19,185.21) and (278.94,134.95) .. (330.78,103.86) .. controls (382.63,72.76) and (443.94,72.62) .. (467.73,103.54) .. controls (491.52,134.45) and (468.78,184.72) .. (416.94,215.81) .. controls (365.09,246.9) and (303.78,247.04) .. (279.98,216.12) -- cycle ;
\draw [color={rgb, 255:red, 189; green, 16; blue, 224 }  ,draw opacity=1 ][line width=1.5]    (374.71,163.35) -- (465.81,107.15) ;
\draw [shift={(469.21,105.05)}, rotate = 148.33] [fill={rgb, 255:red, 189; green, 16; blue, 224 }  ,fill opacity=1 ][line width=0.08]  [draw opacity=0] (11.61,-5.58) -- (0,0) -- (11.61,5.58) -- cycle    ;
\draw [color={rgb, 255:red, 189; green, 16; blue, 224 }  ,draw opacity=1 ][line width=1.5]    (374.19,163.72) -- (333.13,107.09) ;
\draw [shift={(330.78,103.86)}, rotate = 54.05] [fill={rgb, 255:red, 189; green, 16; blue, 224 }  ,fill opacity=1 ][line width=0.08]  [draw opacity=0] (11.61,-5.58) -- (0,0) -- (11.61,5.58) -- cycle    ;

\draw (88,30) node [anchor=north west][inner sep=0.75pt]  [font=\Huge] [align=left] {$\displaystyle \Omega _{P}$};
\draw (311.57,200) node [anchor=north west][inner sep=0.75pt]  [font=\huge,color={rgb, 255:red, 144; green, 19; blue, 254 }  ,opacity=1 ] [align=left] {$\displaystyle \mathcal{E}$};
\draw (407.18,140.91) node [anchor=north west][inner sep=0.75pt]  [font=\Large,color={rgb, 255:red, 189; green, 16; blue, 224 }  ,opacity=1 ] [align=left] {$\displaystyle \lambda _{1,t}$};
\draw (320.8,129.91) node [anchor=north west][inner sep=0.75pt]  [font=\Large,color={rgb, 255:red, 189; green, 16; blue, 224 }  ,opacity=1 ] [align=left] {$\displaystyle \lambda _{2,t}$};
\draw (265.94,236) node [anchor=north west][inner sep=0.75pt]  [font=\Large,color={rgb, 255:red, 65; green, 117; blue, 5 }  ,opacity=1 ] [align=left] {$\displaystyle K_{t,x_{0}}$};
\draw (172.11,179) node [anchor=north west][inner sep=0.75pt]  [font=\LARGE,color={rgb, 255:red, 160; green, 103; blue, 9 }  ,opacity=1 ] [align=left] {$\displaystyle d^{3/2}\mathcal{E}$};

\end{tikzpicture}\caption{Intuition behind \Cref{prop:shape}. Inside the set $K_{t,x_0} $ (green) we have the ellipsoid $ \mathcal{E} $ (purple) and outside its dilation   $ d^{3/2}\mathcal{E} $ (orange). The goal in \Cref{prop:shape} is to show that the axes $\lambda_{1,t}\geq \lambda_{2,t}$ of  $\mathcal{E} $  decrease with rate $t^{\frac{1}{d}}$ for $d\geq 3$. }
    \label{fig:inturion}
\end{figure}

The following is the celebrated John's Lemma. See \cite[Theorem 1.8.2]{Gutirrez2016} and \cite[Lemma~A.13]{figalli2017monge}. 
\begin{lemma}[John's lemma]\label{lemma:john}
Let $K$ be an open bounded convex set. There exists an ellipsoid $\mathcal{E}$ such that 
    $$ \mathcal{E}\subset  K \subset d^{3/2} \mathcal{E}. $$
The center of $\mathcal{E}$ can be taken as the center of mass of $K$. 
\end{lemma}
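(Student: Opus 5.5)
The plan is the classical two-step argument. First I reduce to a centrally symmetric body centered at the barycenter. Then I apply John's theorem for symmetric bodies, where the dilation constant is $\sqrt d$. The factor $d$ lost in the first step, times the $\sqrt d$ from the second, gives exactly $d^{3/2}$. Throughout, $K$ is open, bounded and convex, so $\overline K$ is a convex body. Let $b$ denote the barycenter of $K$; it lies in ${\rm int}(K)$.

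\emph{Step 1 (barycentric inclusion).} I would first show
\[
K-b\subset -d\,(K-b).
\]
This is the Minkowski--Radon inequality. To prove it, fix a unit direction $e$ and let $h_+ = \sup_{K}\langle z-b,e\rangle$ and $h_- = \sup_{K}\langle b-z,e\rangle$. Consider the one-dimensional marginal of $\cL_d|_K$ along $e$. By Brunn's principle its density $\rho$ is such that $\rho^{1/(d-1)}$ is concave on $[-h_-,h_+]$. Comparing $\rho$ with the cone density $c\,(s+h_-)^{d-1}$ that has the same mass, the centroid $0$ satisfies $h_+\le d\,h_-$. Since this holds for every direction $e$, the support functions satisfy $h_{K-b}(e)\le d\,h_{K-b}(-e)$ for all $e$, which is the claimed inclusion.

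\emph{Step 2 (symmetrization).} Set $S:=(K-b)\cap(b-K)$. This is an open, bounded, convex set, symmetric about the origin, and contained in $K-b$. Step 1 gives $\frac1d(K-b)\subset -(K-b)$, and trivially $\frac1d(K-b)\subset K-b$ because $0\in K-b$. Hence
\[
\tfrac1d (K-b)\subset S\subset K-b .
\]

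\emph{Step 3 (John for symmetric bodies).} Let $\mathcal E_0$ be an ellipsoid of maximal volume contained in $S$. It exists by compactness of the admissible affine maps together with continuity of the volume. By symmetry of $S$ and uniqueness of the maximizer, $\mathcal E_0$ is centered at $0$. I then need the inclusion $S\subset\sqrt d\,\mathcal E_0$, and I expect this to be the main obstacle. The route I would take is to normalize $\mathcal E_0=\BB_1(0)$ by a linear map and argue by contradiction. Suppose some $x\in S$ has $\|x\|>\sqrt d$. By symmetry $-x\in S$, so $S$ contains ${\rm conv}(\BB_1(0)\cup\{\pm x\})$. This hull contains an ellipsoid with semi-axis $a>1$ along $x$ and $b<1$ orthogonally, of volume proportional to $ab^{d-1}$. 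A first-order expansion in the perturbation parameter shows that $ab^{d-1}>1$ for a suitable choice as soon as $\|x\|^2>d$, contradicting maximality.

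\emph{Conclusion.} Define $\mathcal E:=b+\mathcal E_0$, which is centered at the barycenter $b$. Then $\mathcal E\subset b+S\subset K$. In the other direction, by Steps 2 and 3,
\[
K\subset b+dS\subset b+d^{3/2}\mathcal E_0=d^{3/2}\mathcal E,
\]
where the dilation is taken about the center $b$. This proves the lemma, including the claim that the center can be chosen as the center of mass of $K$.
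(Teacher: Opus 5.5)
Your proof is correct. The paper gives no proof of its own and only cites \cite[Theorem 1.8.2]{Gutirrez2016} and \cite[Lemma~A.13]{figalli2017monge}; your argument is the standard one behind the constant $d^{3/2}$, namely the centroid factor $d$ from $K-b\subset -d(K-b)$ times the factor $\sqrt d$ of John's theorem for the symmetric body $(K-b)\cap(b-K)$.

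The only slip is cosmetic. The cone $c\,(s+h_-)^{d-1}$ has its apex at $-h_-$, so comparing with it yields $h_-\le d\,h_+$ rather than $h_+\le d\,h_-$; this is harmless because you can apply it to $-e$. Separately, you can center $\mathcal E_0$ without invoking uniqueness, since $\tfrac12\mathcal E_0+\tfrac12(-\mathcal E_0)$ is a centered ellipsoid in $S$ of the same volume.
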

Using John's lemma, we are now ready to prove \Cref{prop:shape}. Recall that $K_{t,x_0} := (\partial \varphi_{t,x_0})^{-1}(x_0)$ is the preimage in $\Omega_\mu$, thus $\cL_d(K_{t,x_0})\asymp t$ as $\mu$ is regular.
\begin{proof}[Proof of~\Cref{prop:shape}]
By John's lemma, there exists an ellipsoid $\cE_t=Z_{t, x_0}+ A_t (\BB_1(0))$ and a constant $C>1$ such that 
 $$  \mathcal{E}_t \subset  K_{t,x_0} \subset C \mathcal{E}_t. $$
Note that, as  $\BQ_{P}^{-1}=\mathbf{F}_{P} $, we have $\| \BQ_{P}(x)- \BQ_{P}(x')\|\geq L^{-1}\|x-x'\|$. Furthermore,  using that  $f_\mu\geq \lambda$ on $\Omega_\mu$, we have 
    \begin{align*}
   \left\|\BQ_{P_t}-\BQ_{P} \right\|_{L^2(\mu)}^2 &\geq \int_{K_{t,x_0}} \| \BQ_{P}(z) -x_0\|^2 \mu(\rd z) \\
   &=\int_{K_{t,x_0}} \| \BQ_{P}(z) -\BQ_{P}(\mathbf{F}_{P}(x_0))\|^2 \mu(\rd z)\\
   &\geq  L^{-2}  \int_{K_{t,x_0}} \| z -\mathbf{F}_{P}(x_0)\|^2 \mu(\rd z)\\
   & \geq  L^{-2} \lambda  \int_{ K_{t,x_0}} \| z -\mathbf{F}_{P}(x_0)\|^2 \dd z\geq  L^{-2}  \lambda \int_{K_{t,x_0}} \| z -Z_{t, x_0}\|^2 \dd z\,,
\end{align*}
where the last line follows as the barycenter minimizes the function $y \mapsto \int_{K_{t,x_0}} \|z-y\|^2\dd z$. Changing  variables yields
\begin{align*}
   \int_{ K_{t,x_0}} \| z -Z_{t, x_0}\|^2 \dd z\geq    \int_{ \cE_t} \| z -Z_{t, x_0}\|^2 \dd z &=\det(A_t) \int_{ \BB_1(0)} \| A_t(z)\|^2 \dd z.
\end{align*}
Let $\lambda_{1,t}\geq  \dots \geq \lambda_{d,t}>0$ be the eigenvalues with associated (orthonormal) eigenvectors $e_1, \dots, e_d$  of the matrix $A_t$. As
$$ \|A_t(z)\|^2= \sum_{i=1}^d  \lambda_{i,t}^{2}(\langle z ,e_i \rangle)^2  \geq \lambda_{1,t}^{2}(\langle z ,e_1 \rangle)^2, $$
we conclude that
\begin{equation}\label{eq:lambda_1t}
     \left\|\BQ_{P_t}-\BQ_{P} \right\|_{L^2(\mu)}^2 \gtrsim \lambda_{1,t}^{2} \det(A_t) \int_{ \BB_1(0)} (\langle z ,e_1 \rangle)^2\dd z\gtrsim  \det(A_t) \lambda_{1,t}^{2} .
\end{equation} 
By definition, $ \cL_d(K_{t,x_0}) \asymp \det(A_t)\cL_d(\BB_1(0))$, then $\det(A_t)\asymp t $ as $\cL_d(K_{t,x_0})\asymp \mu(K_{t,x_0}) = P_t(\{x_0\})= t$.  For $d\geq 3$,  \Cref{thm:L2-estimate} and~\eqref{eq:lambda_1t} imply $ \lambda_{1,t} \lesssim  t^{\frac{1}{d}}$ for $t$ sufficiently small. Moreover, since $\det(A_t) \leq \lambda_{d,t}\lambda_{1,t}^{d-1}$, we see that $\lambda_{d,t} \gtrsim t^{\frac{1}{d}}$. Therefore, there exists $R>0$ such that
\[
\BB_{R^{-1}  t^{\frac{1}{d}}}(Z_{t, x_0})\subset  K_{t,x_0}  \subset \BB_{R t^{\frac{1}{d}}}(Z_{t, x_0}).
\]
Finally, as $\mu(K_{t,x_0}) =t$, we note that 
 $$  \left\|\BQ_{P_t}-\BQ_{P} \right\|_{L^2(\mu)}^2 \geq  L^{-2} \,t \cdot \inf_{w\in K_{t,x_0}}\| \mathbf{F}_P (x_0)-w \|^2 $$
 and \Cref{thm:L2-estimate} concludes the proof for $d\geq 3$. 
The case $d=2$ follows from a similar argument and the details are omitted.   

\end{proof}

\subsection{Proof of \Cref{prop:Example-uniform}}
   \textit{(i)}. It is easy to check that $T_t$ pushes ${\cal U}(\BB_1(0))$ forward to $(1-t) {\cal U}(\BB_1(0)) + t \delta_0 $. To check the optimality, it suffices to note that $T_t$ is the gradient of the function 
    $ f(z)=g(\|z\|) $ with 
    $$ g(s)= \begin{cases}
    0&,\quad s\leq t^\frac{1}{d}\\
        \int_{t^\frac{1}{d}}^s \left(\dfrac{r^d - t}{1 - t}\right)^{\frac{1}{d}} \dd r&,\quad s>t^\frac{1}{d}
    \end{cases}. $$
   Since $s\mapsto g'(s)\geq 0$ is non-decreasing, $g$ 
    is convex, and therefore $f$ is convex as well. Hence, (i)~follows from Brenier's theorem~\cite[Theorem~2.32]{Villani2003}.  
    
    \textit{(ii)} Note that
 \begin{align}
     \int_{\BB_1(0)} \|T_t(z) -z\|^2 \dd z = \underbrace{\int_{\BB_{t^{\frac{1}{d}}}(0)} \|z\|^2 \dd z}_{=: A}+  \underbrace{\int_{\BB_1(0)\setminus \BB_{t^{\frac{1}{d}}}(0)} \|T_t(z)-z\|^2 \dd z}_{=: B}. 
 \end{align}   
A standard calculation shows that
$A = C_d t^{1+\frac{2}{d}}$, where the constant $C_d$ depends only on the dimension.  By setting $r = \|z\|$, the coarea formula gives:
\begin{align*}
    B&=  \cH^{d-1}(\cS^{d-1}) \underbrace{\int_{t^{\frac{1}{d}}}^1  \left( r- \left(\dfrac{r^d  - t}{1 - t}\right)^{\frac{1}{d}}\right)^2 r^{d-1} \dd r}_{=: I(t)}, 
\end{align*}
where we recall that $\cH^{d-1}(\cS^{d-1})$ is the surface area of $\cS^{d-1}$.
We now upper bound $I(t)$. First, perform the change of variables \(u=r^d\), so that
\[
I(t)=\frac{1}{d}\int_t^1 
\left(u^{\frac{1}{d}}-\left(\frac{u-t}{1-t}\right)^{\frac{1}{d}}\right)^2 \dd u.
\]
We divide the integral in two parts: $I(t) = I_1(t) + I_2(t)$, where
$$ I_1(t) =  \frac{1}{d}\int_{t}^{2t} 
\left(u^{\frac{1}{d}}-\left(\frac{u-t}{1-t}\right)^{\frac{1}{d}}\right)^2 \dd u $$
and 
$$ I_2(t) =  \frac{1}{d}\int_{2t}^{1} 
\left(u^{\frac{1}{d}}-\left(\frac{u-t}{1-t}\right)^{\frac{1}{d}}\right)^2 \dd u. $$
Since for $u \in [t, 2t]$, 
$$  \left(u^{\frac{1}{d}}-\left(\frac{u-t}{1-t}\right)^{\frac{1}{d}}\right)^2  \leq t^{\frac{2}{d}}, $$
we have
$I_1(t)\lesssim t^{1+\frac{2}{d}}   $. 
Define the function 
$F_u(t)= \left(\frac{u-t}{1-t}\right)^{\frac{1}{d}}. $
Note that $F_u(0) = u^{\frac{1}{d}}$ and thus 
\[
I_2(t) = \frac{1}{d}\int_{2t}^1 (F_u(0) - F_u(t))^2\dd u.
\]
Note first that
$$ F_u'(t)
= \frac{u-1}{d}\,(u-t)^{\frac{1}{d}-1}(1-t)^{-\frac{1}{d}-1}. $$
Then $F_u'(0)=\frac{u-1}{d}u^{\frac{1}{d}-1}$ 
and for $u\geq 2 t$,
$$ \sup_{s\leq t}|F_u'(s)|
\lesssim  |t-u|^{\frac{1}{d}-1}, $$
so that the mean value theorem yields
$$ I_2(t) \lesssim \frac{t^2}{d}\int_{2t}^{1} |t-u|^{2(\frac{1}{d}-1)} \dd u  . $$
Hence, changing variables gives 
$$
I_2(t)\lesssim t^2 \int_t^{1-t} 
u^{\frac{2}{d}-2} \dd u \leq t^2 \int_t^{1} 
u^{\frac{2}{d}-2} \dd u \lesssim \begin{cases}
      t^{1+\frac{2}{d}} & {\rm if} \ d\geq 3,\\
t^{2}{|\log(t)|}     & {\rm if}\ d=2.
  \end{cases} 
$$
 For $d=2$, we observe that  $$ I_2(t) = 
\frac{t^{2}}{8\sqrt{1-t}}
\left[
\log\!\left(\frac{2 - t + 2\sqrt{1-t}}{t}\right)
- 2\sqrt{1-t}
\right] \asymp t^2 |\log(t)|.
 $$
Then the upper bound follows in view of $A=C_d t^{1+\frac{2}{d}}$.

\textit{(iii)}
For any $x\in \BB(0,1)$, by Proposition~1 in \cite{Wirth2020}, the system
\[
\Delta G = 1- \cL_d(\BB(0,1))\delta_{x},\quad \frac{\partial G}{\partial n_\mu} = 0.
\]
admits a solution $\tilde \cG(x, \cdot)\in \cC^{\infty}_{\rm Loc}(\BB(0,1)\setminus\{x\})$, unique up to an additive constant depending on $x$, which is given by 
\begin{equation}\label{eq:influence-function-uniform}
    \tilde \cG(x,z)
=
 \Gamma(z-x)
-
\frac{1}{d}
\int_{0}^{\|x\|}\left(
\frac{\inner{z}{\frac{x}{\|x\|}} - \dfrac{1}{s}}
{\left\| s z - \dfrac{x}{\|x\|} \right\|^{d}} + \frac{1}{s}\right)\dd s + \frac{1}{2d} \|z\|^{2},
\end{equation}
where 
\[
\Gamma(z)
=
\begin{cases}
- |z|, & \text{if } d = 1, \\[6pt]
-\dfrac{1}{2} \log(\|z\|), & \text{if } d = 2, \\[6pt]
\dfrac{1}{(d-2)d\|z\|^{d-2}}, & \text{if } d > 2.
\end{cases}
\]
Define $G_x = \tilde \cG(x,\cdot) - \int_{\BB(0,1)} \tilde \cG(x,z)\dd z$. Then $G_x$ satisfies $\int_{\BB(0,1)} \cG_x(z)\dd z=0$ and solves~\eqref{eq:PDE-main} in the distributional sense. By the uniqueness of the weak solution (see~\Cref{Thm:InfluenceQuantiles-main}), we know that the influence function satisfies $\BI(x;z) =  \nabla_2 G_x(z) = \nabla_2 \tilde \cG(x,z)$. The desired formula follows from a direct computation of $\nabla_2 \tilde \cG(x,z)$.

\subsection{Proof of~\Cref{coro:IF-property}}

We begin with the proof of the relation~\eqref{eq:symmetry}. 
\begin{lemma}[Symmetry of the Green potential]\label{lem:green-symmetry}
Under the setting of~\cref{Thm:InfluenceQuantiles-main}, for any
$x\in \operatorname{int}(\Omega_P)$ and $z\in \operatorname{int}(\Omega_\mu)$ such that
$x\neq \BQ_P(z)$, it holds that
\[
    G_x(z)=G_{\BQ_P(z)}(\BF_P(x)).
\]
\end{lemma}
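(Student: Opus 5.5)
The plan is the classical Green-function symmetry argument for the self-adjoint operator $\BL_1(u)={\rm div}(f_\mu[\nabla\BQ_P]^{-1}\nabla u)$, with Neumann boundary condition and the $\mu$-compensated Dirac source. Write $z_1:=\BF_P(x)$ and $z_2:=z$, so that $z_1\neq z_2$ and both points lie in ${\rm int}(\Omega_\mu)$. Put $G_1:=G_x$ and $G_2:=G_{\BQ_P(z)}$. By \Cref{lemma:Lp-estimates-of-G}(i), $G_i$ solves $\BL_1 G_i=\mu-\delta_{z_i}$, with normalization $\int G_i\dd\mu=0$. The formal computation is
\[
\int G_1\dd(\mu-\delta_{z_2})=\int G_1\,\BL_1G_2=\int G_2\,\BL_1 G_1=\int G_2\dd(\mu-\delta_{z_1}).
\]
It gives $-G_1(z_2)=-G_2(z_1)$, since the $\mu$-integrals vanish. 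This is exactly the claim $G_x(z)=G_{\BQ_P(z)}(\BF_P(x))$. The task is to justify the middle equality, because neither $G_i$ is an admissible test function in \eqref{eq:weak-sense}.

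The first step is regularization. Fix a mollifier $\rho_\vae$ supported in $\BB_\vae(0)$, with $\vae<\|z_1-z_2\|/4$ and smaller than the distance of both points to $\partial\Omega_\mu$. Let $h_i^\vae:=\rho_\vae(\cdot-z_i)-f_\mu$, which has zero Lebesgue integral. Let $g_i^\vae\in W^{2,p}(\Omega_\mu)$, for every $p<\infty$, be the Neumann solution of $\BL_1 g_i^\vae=-h_i^\vae$ given by \Cref{lemma: existence and uniqueness linearized MA}, normalized by $\int g_i^\vae\dd\mu=0$ (a constant shift does not affect the equation).

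The second step is to test against these regularized functions. Testing \eqref{eq:weak-sense} for $G_1$ with $\varphi=g_2^\vae$, and using $\int g_2^\vae\dd\mu=0$, gives
\[
-\int G_1h_2^\vae\dd z=-g_2^\vae(z_1).
\]
Testing the weak formulation of $g_2^\vae$ against $g_1^\vae$, integrating by parts twice, and using the symmetry of $[\nabla\BQ_P]^{-1}$ together with both Neumann conditions, gives
\[
\int g_1^\vae h_2^\vae=\int g_2^\vae h_1^\vae .
\]

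The third step is passage to the limit, which I expect to be the main obstacle. I would show $g_i^\vae\to G_i$ in $L^{q}$ using the uniqueness argument of \Cref{prop:uniqueness}: the right-hand sides $h_i^\vae$ converge to $\delta_{z_i}-\mu$ in the dual of $W^{2,p}$, which embeds into $\cC^{0,\eta}$. I would also show local uniform convergence in $\cC^1$ near the other point $z_j$. For this, on a ball around $z_j$ avoiding $\BB_\vae(z_i)$, the functions $g_i^\vae$ all solve $\BL_1 g=f_\mu$. Combining the local maximum principle with the uniform $L^q$ bound of \Cref{lemma:Lp-estimates-of-G}(ii) (whose proof applies verbatim to the smoothed sources) and Schauder estimates, exactly as in the proof of \Cref{thm:Unbounded-IF}(i), gives uniform $\cC^{2,\alpha}$ bounds there.

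With these convergences the limits can be taken. First, $\int G_1 h_2^\vae\to G_1(z_2)-\int G_1\dd\mu=G_1(z_2)$, by continuity of $G_1$ near $z_2$ from \Cref{Thm:InfluenceQuantiles-main}(i). Second, $g_2^\vae(z_1)\to G_2(z_1)$. Hence $G_1(z_2)=G_2(z_1)$.

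Alternatively, the same argument can be run with only one regularization, since the identity in the second step alone already yields the result: it shows $G_1(z_2)=\lim g_2^\vae(z_1)$, and it remains to identify this limit with $G_2(z_1)$ via the uniqueness in \Cref{prop:uniqueness}.
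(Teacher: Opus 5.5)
Your argument is correct. It is a streamlined variant of the paper's proof rather than the same proof.

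\textbf{The paper's route.} The paper mollifies \emph{both} sources and obtains $\int u_1^\vae\rho_2^\vae\dd z=\int u_2^\vae\rho_1^\vae\dd z$ by self-adjointness. It must then show that both regularized families converge near the opposite point. For this it uses Stampacchia's truncation (\Cref{lem:neumann-stampacchia}) to get uniform $W^{1,r}$ bounds, and it identifies the limits only along subsequences, via compactness and \Cref{prop:uniqueness}.

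\textbf{Your route.} Your passage to the limit uses only the first identity of your second step. That identity tests the transposition formulation \eqref{eq:weak-sense} of $G_x$ directly with the smooth Neumann solution $g_2^\vae$, and gives $\int G_x\,\rho_\vae(\cdot-z)\dd z=g_2^\vae(\BF_P(x))$. As your closing remark notes, the symmetric identity is then superfluous, and only one regularized family has to be controlled.

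For that family, duality is lighter than truncation. Take a mean-zero $\psi\in L^{q'}$ and set $\varphi=\BL^{-1}(\psi,0)$. This $\varphi$ is admissible in \eqref{eq:weak-sense}, and it is Lipschitz with $\|\nabla\varphi\|_{L^\infty}\lesssim\|\psi\|_{L^{q'}}$ because $q'>d(d+2)/4\geq d$. You then get
\[
\int_{\Omega_\mu}(g_2^\vae-G_{\BQ_P(z)})\psi\dd z=\varphi(z)-\int_{\Omega_\mu}\varphi\,\rho_\vae(\cdot-z)\dd z=O(\vae\|\psi\|_{L^{q'}}).
\]
Combined with the $\mu$-normalization, this gives $g_2^\vae\to G_{\BQ_P(z)}$ in $L^q(\Omega_\mu)$ along the whole family, with a rate and with no subsequence extraction. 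The local maximum principle and Schauder estimates then upgrade this to convergence near $\BF_P(x)$, exactly as in the paper.

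\textbf{What each approach buys.} The cost of your route is small. You evaluate $G_x$ at $z$ using the regularity $G_x\in\cC^2_{\rm Loc}({\rm int}(\Omega_\mu)\setminus\{\BF_P(x)\})$, which \Cref{Thm:InfluenceQuantiles-main}(i) already provides. You also do not obtain the $W^{1,r}$ bounds on the approximants. The paper's truncation lemma supplies those bounds and reuses them in \Cref{lem:neumann-stampacchia}(iv), which in turn feeds \Cref{prop:asym-linear-form-test}.
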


\begin{proof}
Let $\rho_1^\vae,\rho_2^\vae\in \cC_c^\infty({\Omega_\mu})$ be nonnegative mollifiers with unit mass, supported in disjoint balls around $\BF_P(x)$ and $z$, respectively. Fix $p>d$. Let $u_1^\vae,u_2^\vae \in W^{2,p}(\Omega_\mu)$ solve
\begin{equation}\label{eq:mollified-pde}
        \operatorname{div}\!\left(f_\mu[\nabla \BQ_P]^{-1}\nabla u_1^\vae\right)
    =
    f_\mu-\rho_1^\vae,
    \qquad
    \operatorname{div}\!\left(f_\mu[\nabla \BQ_P]^{-1}\nabla u_2^\vae\right)
    =
    f_\mu-\rho_2^\vae,
\end{equation}
respectively, with boundary condition
\[
    \inner{f_\mu[\nabla \BQ_P]^{-1}\nabla u_1^\vae}{n_\mu}
    =
    \inner{f_\mu[\nabla \BQ_P]^{-1}\nabla u_2^\vae}{n_\mu}
    =
    0
    \qquad\text{on }\partial\Omega_\mu,
\]
and normalization
\begin{equation}\label{eq:normalization-green-symmetry}
        \int_{\Omega_\mu}u_1^\vae\dd\mu
    =
    \int_{\Omega_\mu}u_2^\vae\dd\mu
    =
    0.
\end{equation}
Using integration by parts, the boundary condition, and the symmetry of
$f_\mu[\nabla \BQ_P]^{-1}$, we obtain
\[
\begin{aligned}
    \int_{\Omega_\mu}
    u_1^\vae
    \operatorname{div}\!\left(f_\mu[\nabla \BQ_P]^{-1}\nabla u_2^\vae\right)\dd z
    &=
    -\int_{\Omega_\mu}
    \inner{f_\mu[\nabla \BQ_P]^{-1}\nabla u_1^\vae}
    {\nabla u_2^\vae}\dd z  \\
    &=
    \int_{\Omega_\mu}
    u_2^\vae
    \operatorname{div}\!\left(f_\mu[\nabla \BQ_P]^{-1}\nabla u_1^\vae\right)\dd z .
\end{aligned}
\]
Substituting the equations for $u_1^\vae$ and $u_2^\vae$ gives
\[
    \int_{\Omega_\mu}u_1^\vae(f_\mu-\rho_2^\vae)\dd z
    =
    \int_{\Omega_\mu}u_2^\vae(f_\mu-\rho_1^\vae)\dd z.
\]
By~\eqref{eq:normalization-green-symmetry}, this reduces to
\begin{equation}\label{eq:symmetry-equality}
        \int_{\Omega_\mu}u_1^\vae\rho_2^\vae\dd z
    =
    \int_{\Omega_\mu}u_2^\vae\rho_1^\vae\dd z.
\end{equation}

We will show that $u_1^\vae \to G_x$ along some subsequence uniformly on local balls around $z$ as $\vae\downarrow 0$. Then, by a similar argument, we have $u_2^\vae \to G_{\BQ_P(z)}$ along some subsequence uniformly on local balls around $\BF_P(x)$ as $\vae\downarrow 0$. Choose $r>0$ such that $\BB_r(z)\Subset \operatorname{int}(\Omega_\mu)$, $\BB_r(\BF_P(x))\Subset \operatorname{int}(\Omega_\mu)$, $\BB_r(z)\cap \BB_r(\BF_P(x))=\emptyset$. The signed measures $(f_\mu-\rho_1^\vae)\dd z$ have total mass zero and uniformly bounded total variation. Hence, by Stampacchia's estimates for homogeneous Neumann problem from~\cref{lem:neumann-stampacchia}~(i), together with
the normalization~\eqref{eq:normalization-green-symmetry},
\[
    \sup_{\vae>0}\|u_1^\vae\|_{W^{1,q}(\Omega_\mu)}<\infty
\]
for any $q \in \left(1,\; \frac{d(d+2)}{d^{2}+2d-4}\right)$. 

Take $\vae$ sufficiently small, $\operatorname{supp}(\rho_1^\vae)\cap \BB_r(z)=\emptyset$.
Therefore
\[
    \operatorname{div}\!\left(f_\mu[\nabla \BQ_P]^{-1}\nabla u_1^\vae\right)
    =
    f_\mu
    \qquad\text{in }\BB_r(z).
\]
The local maximum principle~\citep[Theorem~9.20]{GilbargTrudinger.Book} gives
\[
    \sup_{\vae>0}\|u_1^\vae\|_{L^\infty(\BB_{r/2}(z))}\lesssim \frac{\|u_1^\vae\|_{L^q(\BB_{r}(z))}}{r^{\frac{d}{q}}} + r\|f_\mu\|_{L^d(\BB_r(z))} \lesssim 1.
\]
By the interior Schauder estimates~\citep[Theorem~6.2]{GilbargTrudinger.Book},
\begin{equation}\label{eq:uniform-C2alpha}
        \sup_{\vae>0}\|u_1^\vae\|_{\cC^{2,\alpha}(\BB_{r/4}(z))}\lesssim 1.
\end{equation}
By~\cref{lem:neumann-stampacchia}~(ii)--(iv), $u_1^{\vae_n} \to G_x$ strongly in $L^q(\Omega_\mu)$ along some subsequence $\vae_n \downarrow 0$, together with~\eqref{eq:uniform-C2alpha}, the convergence can be upgraded to $\cC^{2,\beta}(\BB_{r/4}(z))$ for every
$0<\beta<\alpha$. Passing, if necessary, to a further subsequence, again denoted by
$\vae_n$, we may also assume that $u_2^{\vae_n}$ converges to
$G_{\BQ_P(z)}$ in
$\cC^{2,\beta}(\BB_{r/4}(\BF_P(x)))$ for every $0<\beta<\alpha$. 
Therefore,
\[
\lim_{n} \int_{\Omega_\mu}u_1^{\vae_n}\rho_2^{\vae_n}\dd z =
    G_{x}(z)\qquad {\rm and} \qquad
\lim_{n} \int_{\Omega_\mu}u_2^{\vae_n}\rho_1^{\vae_n}\dd z =
    G_{\BQ_P(z)}(\BF_P(x)),
\]
and thus~\eqref{eq:symmetry-equality} gives
\[
     G_x(z)=G_{\BQ_P(z)}(\BF_P(x)).
\]
This proves the result.
\end{proof}

\begin{proof}[Proof of~\Cref{coro:IF-property}]
We prove each point separately.

\textit{Proofs of (i) and (ii)}. The claims follow from~\Cref{thm:Unbounded-IF} by direct computation.

\textit{Proof of (iii)}. Recall the bivariate function in~\eqref{rmk:green-function},  $\cG:\Omega_P \times \Omega_\mu \to \R$ such that $\cG(x,z) = G_x(z)$ where $\cG_x:\Omega_\mu\to \R$ is given in~\Cref{Thm:InfluenceQuantiles-main}. Let $h\in \R^d$ such that $\|h\|=1$. For $t$ sufficiently small such that $z+th\in \Omega_\mu$, write 
\[
H_t(x) := \frac{1}{t}\left(\cG(x,z + th) - \cG(x,z)\right)\quad \text{and}\quad H(x) := \inner{\nabla G_x(z)}{h},
\]
where $H(x)$ is defined arbitrarily on the set $\{x\in\Omega_P: \BF_P(x) = z\}$ since the set has measure zero under $P$. The claim follows once we have shown that
\begin{equation}\label{eq:dct}
 \lim_{t\downarrow0} \int_{\Omega_P} H_t \dd P = \int_{\Omega_P} H\dd P.
\end{equation}
Indeed, using the relation $\cG(x,z)= \cG(\BQ_P(z),\BF_P(x))$ from~\cref{lem:green-symmetry} in the third line and the fact that $\int_{\Omega_\mu}G_x \dd \mu=0$ in the last line, we obtain that
\begin{align*}
\int_{\Omega_P} \inner{\BI(x;\BQ_P(z))}{h}\,P(\rd x)
&= \int_{\Omega_P} \inner{\nabla G_x(z)}{h}\,P(\rd x)=
\int_{\Omega_P} H(x)\,P(\rd x)  = \lim_{t\downarrow0} \int_{\Omega_P} H_t(x) \, P(\rd x)\\
 &=\lim_{t\downarrow0} \frac{1}{t}\int_{\Omega_P}\left(\cG(x,z + th) - \cG(x,z)\right)\,P(\rd x) \\
& = \lim_{t\downarrow0}\frac{1}{t} \int_{\Omega_P}\left(\cG(\BQ_P(z + th), \BF_P(x)) - \cG(\BQ_P(z),\BF_P(x))\right)\,P(\rd x) \\
& = \lim_{t\downarrow0} \frac{1}{t} \int_{\Omega_P}\left(G_{\BQ_P(z + th)}(\BF_P(x)) - G_{\BQ_P(z)}(\BF_P(x))\right)\,P(\rd x) \\
 & =  \lim_{t\downarrow0} \frac{1}{t} \int_{\Omega_\mu}\left(G_{\BQ_P(z + th)}(\tilde z) - G_{\BQ_P(z)}(\tilde z)\right)\,\mu(\rd \tilde z) =0.
\end{align*}
The rest of the proof is devoted to establishing~\eqref{eq:dct}. To that end, we will rely on a localization argument. Choose $R>0$ small enough and define 
\[
A_R := \{x\in \Omega_P: \|\BF_P(x) -z \|\leq R\}\setminus \{\BQ_P(z)\}.
\]
By~\Cref{thm:Unbounded-IF}~(i), there exists $C>0$ such that we have 
\[
|H(x)| = |\inner{\nabla G_x(z)}{h}| \leq \|\nabla G_x(z)\|\leq \frac{C}{\|\BF_P(x) -z\|^{d-1}} \leq CR^{1-d},\quad \forall x\in A_R^c.
\]
On the other hand, choose $t$ small enough such that $\|\BF_P(x) - (z+th)\| \geq R/2$, then $\|\BF_P(x) - (z+sth)\| \geq R/2$ for all $s\in [0,1]$. Hence, by the mean-value theorem, we conclude that
\[
|H_t(x)| \leq \int_0^1 \left|\inner{\nabla G_x(z + sth)}{h}\right|\dd s \leq \int_0^1 \frac{C}{\|\BF_P(x) -(z + sth)\|^{d-1}} \dd s\leq C2^{d-1}R^{1-d}.
\]
Therefore, applying the dominated convergence theorem shows
\begin{equation}\label{eq:dct-ARc}
     \lim_{t\downarrow0}\int_{A_R^c} H_t \dd P = \int_{A_R^c} H\dd P.
\end{equation}
We proceed with controlling the integrals over $A_R$. Since $f_\mu \leq \Lambda$, we derive by \Cref{thm:Unbounded-IF}~(i) that
\begin{align*}
    \left|\int_{A_R} H \dd P \right|&\leq \int_{A_R} \frac{C}{\|\BF_P(x)-z\|^{d-1}} P(\rd x)\\
    & \leq \int_{\{\|\tilde z -z\|\leq R\}} \frac{C}{\|\tilde z - z\|^{d-1}} \mu(\rd \tilde z)\leq C_1\int_0^R \frac{1}{r^{d-1}}d r^{d-1}\dd r\leq C_1 R,
\end{align*}
where the constant $C_1>0$ depends on $d, C, \Lambda$. 
Moreover, for $|t|\leq R/2$, the mean-value theorem and
\Cref{thm:Unbounded-IF}~(i)  give
\[
    |H_t(x)|
    \leq
    C\int_0^1
    \frac{ds}{
    \|\BF_P(x)-(z+sth)\|^{d-1}
    }.
\]
Therefore, using $(\BF_P)_\#P=\mu$, Fubini--Tonelli's theorem, and
$f_\mu\leq\Lambda$, we obtain
\begin{align*}
    \int_{A_R}|H_t(x)|\,dP(x)
    &\leq
    C\int_0^1
    \int_{\{\|\widetilde z-z\|\leq R\}}
    \frac{\mu(\rd \widetilde z)\dd s}{
    \|\widetilde z-(z+sth)\|^{d-1}
    }
    \\
    &\leq
    C\Lambda\int_0^1
    \int_{\BB_{3R/2}(z+sth)}
    \frac{\dd \widetilde z\dd s}{
    \|\widetilde z-(z+sth)\|^{d-1}
    }
    \\
    &\leq
    C_2\int_0^{3R/2}
    r^{-(d-1)}r^{d-1}\dd r
    \leq C_3R,
\end{align*}
where $C_2,C_3>0$ are independent of $t$ and $R$.
Hence, we obtain that 
\[
\limsup_{t\to 0}\left|\int_{A_R} (H_t - H) \dd P\right| \leq (C_1+C_3)R.
\]
In view of~\eqref{eq:dct-ARc}, we obtain that
\[
\limsup_{t\to 0}\left|\int_{\Omega_P} (H_t - H) \dd P\right| \leq (C_1+C_3)R.
\]
Letting $R\to 0$ yields~\eqref{eq:dct}.
\end{proof}

\subsection{Proof of~\cref{prop:asym-linear-form-test}}\label{sec:proof-prop:asym-linear-form-test}

Following the same argument as in the proof of~\cref{lemma:test-linearization} and write $f = g\circ \BF_P$, we have
\begin{equation}\label{eq:tylor-linear-form-2}
\left|
\int_{\Omega_\mu}
\left\langle
\BQ_{P_n}-\BQ_P,
[\nabla\BQ_P]^{-1}\nabla g
\right\rangle
\dd\mu
-
(P_n-P)(f)
\right|
\lesssim
\|g\|_{\CC^{1,\eta}(\Omega_\mu)}
\|\BQ_{P_n}-\BQ_P\|_{L^2(\mu)}^{1+\eta}.
\end{equation}
Since $P$ is absolutely continuous with respect to Lebesgue measure
and $\partial\Omega_P$ has Lebesgue measure zero, we have
$P(\partial\Omega_P)=0$. Moreover, by~\cref{thm:Cafarrelli},
$\BF_P$ maps ${\rm int}(\Omega_P)$ onto
${\rm int}(\Omega_\mu)$. Hence, with probability one,
$X_i\in{\rm int}(\Omega_P)$ and
$\BF_P(X_i)\in{\rm int}(\Omega_\mu)$ for every $i=1,\ldots,n$; we work on this event. 

In view of $\BI(X_i;\BQ_P(z))=\nabla G_{X_i}(z)$ and
$\dd\mu=f_\mu\dd z$, applying \cref{lem:neumann-stampacchia}~(iv) with $a=\BF_P(X_i)$ and $g\in\cC^{1,\eta}(\Omega_\mu)$
implies that
\begin{equation}\label{eq:IF-test-identity}
\int_{\Omega_\mu}
\left\langle
\BI(X_i;\BQ_P(\cdot)),
[\nabla\BQ_P]^{-1}\nabla g
\right\rangle
\dd\mu
=
g(\BF_P(X_i))
-
\int_{\Omega_\mu}g\,\dd\mu.
\end{equation}
Averaging \eqref{eq:IF-test-identity} over $i=1,\ldots,n$ gives
\[
\int_{\Omega_\mu}
\left\langle
\frac1n\sum_{i=1}^n
\BI(X_i;\BQ_P(\cdot)),
[\nabla\BQ_P]^{-1}\nabla g
\right\rangle
\dd\mu
=
\frac1n\sum_{i=1}^n g(\BF_P(X_i))
-
\int_{\Omega_\mu}g\,\dd\mu
=
(P_n-P)(f).
\]
Combining this identity with
\eqref{eq:tylor-linear-form-2} yields
\begin{multline*}
\left|
\int_{\Omega_\mu}
\left\langle
\BQ_{P_n}-\BQ_P,
[\nabla\BQ_P]^{-1}\nabla g
\right\rangle
\dd\mu
-
\int_{\Omega_\mu}
\left\langle
\frac1n\sum_{i=1}^n
\BI(X_i;\BQ_P(\cdot)),
[\nabla\BQ_P]^{-1}\nabla g
\right\rangle
\dd\mu
\right|
\\
\lesssim
\|g\|_{\CC^{1,\eta}(\Omega_\mu)}
\|\BQ_{P_n}-\BQ_P\|_{L^2(\mu)}^{1+\eta}.
\end{multline*}
Recall that
$\|\BQ_{P_n}-\BQ_P\|_{L^2(\mu)}
\lesssim\cW_2(P_n,P)$ by
\cite[Theorem~6]{Manole.2024.Aos}, while
\[
\E[\cW_2^2(P_n,P)]
\lesssim
\alpha(n,d)
:=
\begin{cases}
n^{-1}\log(n),& d=2,\\
n^{-2/d},& d\geq3,
\end{cases}
\]
by \cite[Corollary~8]{Manole.2024.Aos} when $d>2$ and by
\cite{Ambrosio-EJS} when $d=2$. Since $\eta\in(0,1]$, Jensen's
inequality gives
\[
r_n
\E\!\left[
\|\BQ_{P_n}-\BQ_P\|_{L^2(\mu)}^{1+\eta}
\right]
\lesssim
r_n
\alpha(n,d)^{(1+\eta)/2}
=o(1).
\]
Markov's inequality therefore yields
\[
r_n
\left|
\int_{\Omega_\mu}
\left\langle
\BQ_{P_n}-\BQ_P,
[\nabla\BQ_P]^{-1}\nabla g
\right\rangle
\dd\mu
-
\int_{\Omega_\mu}
\left\langle
\frac1n\sum_{i=1}^n
\BI(X_i;\BQ_P(\cdot)),
[\nabla\BQ_P]^{-1}\nabla g
\right\rangle
\dd\mu
\right|
=
o_{\PP}(1).
\]

If $d=2,3$ and
$\eta\in((d-2)/2,1]$, then
$\sqrt n\,\alpha(n,d)^{(1+\eta)/2}=o(1)$. Hence the same argument
gives
\[
\sqrt n
\left|
\int_{\Omega_\mu}
\left\langle
\BQ_{P_n}-\BQ_P,
[\nabla\BQ_P]^{-1}\nabla g
\right\rangle
\dd\mu
-
\int_{\Omega_\mu}
\left\langle
\frac1n\sum_{i=1}^n
\BI(X_i;\BQ_P(\cdot)),
[\nabla\BQ_P]^{-1}\nabla g
\right\rangle
\dd\mu
\right|
=
o_{\PP}(1).
\]
Finally, by \eqref{eq:IF-test-identity},
\[
Y_i
:=
\int_{\Omega_\mu}
\left\langle
\BI(X_i;\BQ_P(\cdot)),
[\nabla\BQ_P]^{-1}\nabla g
\right\rangle
\dd\mu
=
g(\BF_P(X_i))
-
\int_{\Omega_\mu}g\,\dd\mu.
\]
Since $(\BF_P)_\#P=\mu$, the random variables
$\BF_P(X_i)$ are i.i.d. with law $\mu$. Therefore,
\[
\E[Y_i]=0,
\qquad
{\rm Var}(Y_i)
=
{\rm Var}_\mu(g(Z))
=
\int_{\Omega_\mu}g^2\,\dd\mu
-
\left(
\int_{\Omega_\mu}g\,\dd\mu
\right)^2
=:\sigma_g^2<\infty.
\]
The classical central limit theorem gives
\[
\frac1{\sqrt n}\sum_{i=1}^nY_i
\overset{d}{\longrightarrow}
\cN(0,\sigma_g^2).
\]
Combining this convergence with the preceding $o_{\PP}(1)$ estimate
and Slutsky's theorem yields
\[
\sqrt n
\int_{\Omega_\mu}
\left\langle
\BQ_{P_n}-\BQ_P,
[\nabla\BQ_P]^{-1}\nabla g
\right\rangle
\dd\mu
\overset{d}{\longrightarrow}
\cN(0,\sigma_g^2),
\qquad d=2,3.
\]

\appendix
\section{Background on PDE}\label{sec:solvability}

We define the (Besov) boundary space of traces   
$$\mathcal{B}^{k-\frac{1}{p},p}(\partial\Omega) := \left\{ \phi \in \mathcal{D}'(\partial\Omega):  \text{there exists a function } u \in W^{k,p}(\Omega) \text{ with } \gamma_0( u) = \phi \right\},$$
where $\gamma_0$ denotes the trace operator and $\mathcal{D}'(\partial\Omega)$ the space of distributions on $\partial\Omega$. Recall that 
the trace operator $\gamma_0$ is the unique bounded, linear operator
\[
\gamma_0 : W^{k,p}(\Omega) \to B^{k-\frac{1}{p},p}(\partial\Omega)
\]
defined for smooth functions $u \in C^\infty(\overline{\Omega})$ by the restriction
\[
\gamma_0(u) = u|_{\partial\Omega},
\]
and extended to the entire Sobolev space $W^{k,p}(\Omega)$ by continuity (we refer to \cite[Chapter~7.3]{Taira.2024.book} for further details). We define the norm 
$$ \| \phi \|_{B^{k-\frac{1}{p},p}(\partial\Omega)} = \inf \left\{ \| u \|_{W^{k,p}(\Omega)} : u \in W^{k,p}(\Omega),\ \gamma_0 u = \phi \text{ on } \partial\Omega \right\}, $$
 which  makes $ \mathcal{B}^{k-\frac{1}{p},p}(\partial\Omega)$ a Banach space.

Define the operator
\begin{equation}\label{eq:operator-L}
   \begin{aligned}
 \BL :  W^{2,p}(\Omega_\mu) &\to   L^p(\Omega_\mu) \times  \mathcal{B}^{1-\frac{1}{p},p}(\partial\Omega_\mu)\\
\xi&\mapsto \left(\begin{array}{c}
 \BL_1(\xi)\\
  \BL_2(\xi)
 \end{array} \right)= \left(\begin{array}{c}
 {\rm div} (f_\mu [\nabla\BQ_{P}]^{-1} \nabla \xi ) 
 \\
  \langle  [\nabla\BQ_{P}]^{-1}  \nabla \xi,   n_{\mu} \rangle f_\mu
 \end{array} \right),
\end{aligned} 
\end{equation}
which is bounded by the trace theorem (see \cite[Theorem~7.8]{Taira.2024.book}). Here $n_{\mu} $ denotes  the outer
normal unit vector field at $\partial \Omega_\mu$. 
Furthermore, $ \BL $  is uniformly elliptic  with strictly oblique boundary condition. We call $\xi\in  W^{2,p}(\Omega_\mu)$ a strong solution to the system 
\[
  \BL(\xi) = (f,g)
\]
if $\BL_1(\xi) =f$ holds almost everywhere in $\Omega_\mu$ and $\BL_2(\xi) =g$ holds in the trace sense. We now present a result that characterizes the invertibility of the operator $\BL$ and provides an \emph{a priori} estimate for the strong solution~$\xi$.

\begin{lemma}\label{lemma: existence and uniqueness linearized MA}
    Let $p\in (1, 
    \infty)$. For any $f \in L^{p}({\Omega}_\mu)$ and $g\in  \mathcal{B}^{1-\frac{1}{p},p}(\partial\Omega_\mu)$, the system 
 \begin{align}\label{eq: linearized MongeAmpere general}
  \BL(\xi) = (f,g)
\end{align}
admits a unique  (up to additive shifts) strong solution  $ \xi \in W^{2,p}({\Omega}_\mu) $ for all $p\in (1, \infty)$ if
\begin{equation}\label{eq: compatibility condition-general}
\int_{\Omega_\mu}  f\dd z = \int_{\partial \Omega_\mu} g  \dd \cH^{d-1}.
\end{equation}
Furthermore, in that case, there exists a constant $C$ depending on $\alpha, d, p, \mu, P$ such that 
\begin{equation}
    \label{eq:estimate-Calderon}
    \inf_{a\in \R}\| \xi-a\|_{W^{2,p}(\Omega_\mu)} \leq C \left(\|f\|_{L^p(\Omega_\mu)}+ \| g\|_{\cB^{1-\frac{1}{p},p}(\partial \Omega_\mu)}\right). 
\end{equation}
\end{lemma}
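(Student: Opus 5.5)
The plan is to treat $\BL$ as a uniformly elliptic divergence-form operator with a conormal (hence strictly oblique) boundary condition and to apply the standard $L^p$ theory of Agmon--Douglis--Nirenberg type, as presented in \cite{Taira.2024.book}. We begin by checking the structural hypotheses. The coefficient matrix $a:=f_\mu[\nabla\BQ_P]^{-1}$ is symmetric. By \Cref{thm:Cafarrelli} and $f_\mu\in\cC^{1,\alpha}$, it belongs to $\cC^{1,\alpha}(\Omega_\mu)$ and satisfies $c\,\mathrm{I}_d\preceq a\preceq C\,\mathrm{I}_d$ with $c,C>0$. The boundary operator $\BL_2(\xi)=\langle a\nabla\xi,n_\mu\rangle$ is therefore oblique, since $\langle a n_\mu,n_\mu\rangle\geq c>0$, and $\partial\Omega_\mu$ is $\cC^{2,1}$. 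Under these conditions, the a priori estimate
\[
\|\xi\|_{W^{2,p}(\Omega_\mu)}\leq C\bigl(\|\BL_1\xi\|_{L^p}+\|\BL_2\xi\|_{\cB^{1-\frac1p,p}(\partial\Omega_\mu)}+\|\xi\|_{L^p}\bigr)
\]
holds for every $p\in(1,\infty)$. Combined with Rellich compactness, this shows that $\BL$ is a Fredholm operator with finite-dimensional kernel and closed range.

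\textbf{Necessity of the compatibility condition.} The divergence theorem gives $\int_{\Omega_\mu}\BL_1\xi\dd z=\int_{\partial\Omega_\mu}\BL_2\xi\dd\cH^{d-1}$ for every $\xi\in W^{2,p}$, by density of smooth functions. Hence \eqref{eq: compatibility condition-general} is necessary.

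\textbf{Existence.} For $p=2$, we use Lax--Milgram on $\{\xi\in H^1(\Omega_\mu):\int\xi\dd z=0\}$ with the bilinear form $\int\langle a\nabla\xi,\nabla\phi\rangle\dd z$ and the functional $\phi\mapsto-\int f\phi\dd z+\int_{\partial\Omega_\mu}g\phi\dd\cH^{d-1}$. Coercivity follows from the ellipticity of $a$ and the Poincaré--Wirtinger inequality. The compatibility condition makes the weak formulation valid for all test functions, including constants. For smooth compatible data, boundary regularity then upgrades the weak solution to $W^{2,r}$ for every $r$. For general compatible data $(f,g)\in L^p\times\cB^{1-\frac1p,p}$, we approximate by smooth compatible data, for instance by correcting a smooth approximation of $f$ by a constant. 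Since the range of $\BL$ is closed, it contains all compatible data.

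\textbf{Uniqueness up to constants.} Let $\xi\in W^{2,p}$ satisfy $\BL\xi=(0,0)$. I expect this to be the main obstacle when $p<2$, because $\xi$ need not lie in $H^1$ a priori, so the energy argument is not immediately available. I would first bootstrap. The $L^p$ regularity theorem, applied to a solution with zero data, gives $\xi\in W^{2,r}$ for increasing $r$ via Sobolev embedding, and in particular $\xi\in H^1$. Testing the equation against $\xi$ then yields $\int\langle a\nabla\xi,\nabla\xi\rangle\dd z=0$, so $\nabla\xi=0$ and $\xi$ is constant on the connected set $\Omega_\mu$.

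\textbf{The estimate \eqref{eq:estimate-Calderon}.} We argue by a Peetre-type compactness argument. Suppose the estimate fails. Then there exist $\xi_n$ with $\int\xi_n\dd z=0$ and $\|\xi_n\|_{W^{2,p}}=1$, while $\|\BL\xi_n\|\to0$. By Rellich's theorem, a subsequence converges in $L^p$. The a priori estimate then shows that the sequence is Cauchy in $W^{2,p}$, so it converges to some $\xi$ with $\BL\xi=0$, $\int\xi\dd z=0$ and $\|\xi\|_{W^{2,p}}=1$. By the uniqueness step, $\xi$ is a mean-zero constant, so $\xi=0$, a contradiction. The resulting constant depends only on $d$, $p$, $\alpha$, the ellipticity and $\cC^{1,\alpha}$ bounds of $a$, and $\Omega_\mu$, and hence only on $\alpha,d,p,\mu,P$.
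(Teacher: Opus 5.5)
Your proof is correct, but it is organized differently from the paper's. The paper never builds a variational solution. It perturbs $\BL$ to the Robin-type operator $\mathbb{M}(\xi)=\BL(\xi)+(0,\gamma_0\xi)$, whose bounded invertibility on $W^{2,p}(\Omega_\mu)$ it takes from \cite[Theorem~16.2]{Taira.2024.book}. It rewrites $\BL(\xi)=(f,g)$ as $\xi-\mathbb{M}^{-1}(0,\gamma_0\xi)=\mathbb{M}^{-1}(f,g)$, where the perturbation is compact by the compact embedding of boundary Besov spaces. It then checks that $\mathbb{M}^{-1}$ maps compatible data into $\{\xi:\int_{\partial\Omega_\mu}\xi\,\mathrm{d}\cH^{d-1}=0\}$ and applies the Fredholm alternative on that subspace, so existence is reduced to uniqueness. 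Uniqueness is the energy identity for $p\geq 2$. For $1<p<2$ it is a duality argument: pairing $\xi$ with the $W^{2,p'}$ solution of $\BL(v)=(h,0)$, for zero-mean $h\in L^{p'}$, gives $\int_{\Omega_\mu}\xi h\,\mathrm{d}z=0$. The estimate then follows from the bounded inverse theorem. You instead get closed range from the a priori estimate plus Rellich (Peetre's argument), construct solutions by Lax--Milgram and density of smooth compatible data, obtain uniqueness for small $p$ by a regularity bootstrap, and prove the estimate by a compactness contradiction.

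Your route decouples existence from uniqueness and treats all $p$ uniformly. The price is that it rests on two regularity statements that do not follow from the a priori estimate alone, since that estimate presupposes $\xi\in W^{2,r}$. The first is that weak solutions with smooth data lie in every $W^{2,r}$. The second is that $W^{2,p}$ solutions with zero data lie in $W^{2,r}$ for larger $r$. Both are standard, but they are usually proved from exactly the unique solvability of a perturbed oblique problem that the paper uses as its single black box.

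You can in fact drop the bootstrap. Once smooth zero-mean $h$ yields a solution $v$ in every $W^{2,r}$, the paper's duality computation gives uniqueness for every $p\in(1,\infty)$ directly. One small overstatement: a contradiction argument for a fixed operator does not show that the constant depends only on the ellipticity and $\cC^{1,\alpha}$ bounds of $f_\mu[\nabla\BQ_P]^{-1}$; that would require running the compactness over the whole coefficient class. It does give a constant depending on $\mu$ and $P$, which is all the lemma asserts.
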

The proof follows essentially the same technique as in \cite[Theorem~5.6]{gonzalez2024linearization}, which establishes solvability of the system in the classical sense.
\begin{remark}
In the proof, we showed that there exists a unique solution $\xi \in W^{2,p}(\Omega_\mu)$ satisfying $\int_{\partial \Omega_\mu} \xi \dd \cH^{d-1} = 0$. However, since the operator $\BL$ acts on $\nabla \xi$, adding a constant to $\xi$ yields another solution of the same system. In particular, we may instead normalize $\xi$ by imposing $\int_{\Omega_\mu} \xi \dd z = 0$.
\end{remark}

\begin{proof}
We prove uniqueness and existence separately.

{\it Uniqueness.}
We first consider $p\geq 2$. The case $1<p<2$ will be treated after
existence has been proved for the conjugate exponent. 
If $\BL(\xi)=0$ for $\xi\in W^{2,p}(\Omega_\mu)$ and $p\geq 2$, then for all $\psi\in {\cC^\infty(\Omega_\mu)},$ 
\begin{align*}
\int_{\Omega_\mu}
\left\langle
\nabla\psi,[\nabla\BQ_P]^{-1}\nabla\xi
\right\rangle
\dd\mu
={}&
-\int_{\Omega_\mu}
\psi\,{\rm div}\!\left(
f_\mu[\nabla\BQ_P]^{-1}\nabla\xi
\right)\dd z
\\
&+
\int_{\partial\Omega_\mu}
\psi
\left\langle
[\nabla\BQ_P]^{-1}\nabla\xi,n_\mu
\right\rangle
f_\mu\dd\cH^{d-1}
=0.
\end{align*}
Since $p\geq2$ and $\mu$ is equivalent to the Lebesgue measure on
$\Omega_\mu$, it follows by approximation that
\[
\int_{\Omega_\mu}
\left\langle
\nabla\xi,[\nabla\BQ_P]^{-1}\nabla\xi
\right\rangle
\dd\mu
=0.
\]
Since the eigenvalues of $[\nabla\BQ_P]^{-1}$ are uniformly bounded away
from zero and {bounded above}, it follows that
\[
\int_{\Omega_\mu}\|\nabla\xi\|^2\dd\mu=0,
\]
which implies, by the Poincaré inequality
\citep[p.~291]{Evans-PDE}, that $\xi$ is constant.

{\it Existence.}
Assume first that $p\geq2$.
We define an auxiliary operator
\[
\mathbb{M}:
W^{2,p}(\Omega_\mu)
\longrightarrow
L^p(\Omega_\mu)
\times
\mathcal{B}^{1-\frac1p,p}(\partial\Omega_\mu)
\]
as
\[
\mathbb{M}(\xi)
=
\left(
\begin{array}{c}
{\rm div}\!\left(
f_\mu[\nabla\BQ_P]^{-1}\nabla\xi
\right)
\\[1mm]
\left\langle
[\nabla\BQ_P]^{-1}\nabla\xi,n_\mu
\right\rangle f_\mu
+
\gamma_0\xi
\end{array}
\right)
=
{\BL(\xi)}
+
(0,\gamma_0\xi).
\]
As the eigenvalues of $[\nabla\BQ_P]^{-1}$ and $f_\mu$ are uniformly
bounded away from zero and bounded above, the boundary
condition is strictly oblique; see
\cite[Lemma~5.9]{gonzalez2024linearization}. Hence,
\cite[Theorem~16.2]{Taira.2024.book} implies that $\mathbb{M}$ admits a
bounded inverse. We rewrite $\BL(\xi)=(f,g)$ as
\begin{equation}
\label{eq:Solvable}
\xi-\mathbb{M}^{-1}(0,\gamma_0\xi)
=
\mathbb{M}^{-1}(f,g).
\end{equation}
  We show now that for 
  $$ (f,g)\in \mathcal{Y}_{p,0}:=\left\{ (f,g)\in L^p(\Omega_\mu)
\times
\mathcal{B}^{1-\frac1p,p}(\partial\Omega_\mu) \ \text{satisfying \eqref{eq: compatibility condition-general}} \right\}$$
  the equation \eqref{eq:Solvable}
 is uniquely solvable for some $\xi \in W^{2,p}(\Omega_\mu)$ such that $
\int_{\partial \Omega_\mu} \xi \dd \cH^{d-1} = 0$. 
The trace operator
\[
\gamma_0:W^{2,p}(\Omega_\mu)
\longrightarrow
\mathcal B^{2-\frac1p,p}(\partial\Omega_\mu)
\]
is bounded. Furthermore,  by the definition of the boundary Besov spaces, the
Rellich--Kondrachov theorem \cite[Theorem~7.6]{Taira.2024.book}, and the
trace theorem \cite[Theorem~7.8]{Taira.2024.book},  the embedding
\[
\mathcal B^{2-\frac1p,p}(\partial\Omega_\mu)
\hookrightarrow
\mathcal B^{1-\frac1p,p}(\partial\Omega_\mu)
\]
is compact.  Since
\[
\mathbb M^{-1}:
L^p(\Omega_\mu)\times
\mathcal B^{1-\frac1p,p}(\partial\Omega_\mu)
\longrightarrow W^{2,p}(\Omega_\mu)
\]
is bounded by \cite[Theorem~16.2]{Taira.2024.book}, we  conclude that the operator
$$ W^{2,p}(\Omega_\mu)\ni \xi \mapsto \mathbb{M}^{-1}(0,\gamma_0\xi)\in W^{2,p}(\Omega_\mu) $$ is compact.
Now we need to check that $$\mathbb{M}^{-1}(f,g)\in \mathcal{X}_p:= \left\{ \xi \in W^{2,p}(\Omega_\mu): 
\int_{\partial \Omega_\mu} \xi \dd \cH^{d-1} = 0\right\}$$ for  $(f,g)\in \mathcal{Y}_{p,0}$. Since the map
\[
u\longmapsto
\int_{\partial\Omega_\mu}\gamma_0u\,d\mathcal H^{d-1}
\]
is continuous on \(W^{2,p}(\Omega_\mu)\), the space
\(\mathcal X_p\) is a closed subspace (hence Banach) of \(W^{2,p}(\Omega_\mu)\). If $(f,g)\in\mathcal{Y}_{p,0}$  and
$u=\mathbb{M}^{-1}(f,g)$, then in the strong sense 
\[
\operatorname{div}\!\left(
f_\mu[\nabla\BQ_P]^{-1}\nabla u
\right)=f
\]
and
\[
\left\langle
f_\mu[\nabla\BQ_P]^{-1}\nabla u,n_\mu
\right\rangle+\gamma_0u=g.
\]
Integrating the first identity and using the divergence theorem and the
second identity, we obtain
\[
\int_{\partial\Omega_\mu}\gamma_0u\,d\mathcal H^{d-1}
=
\int_{\partial\Omega_\mu}g\,d\mathcal H^{d-1}
-
\int_{\Omega_\mu}f\,dz
=0.
\]
Hence $\mathbb{M}^{-1}(f,g)\in\mathcal{X}_p$.  
For every $\xi\in\mathcal{X}_p$, the pair
$(0,\gamma_0\xi)$ belongs to $\mathcal{Y}_{p,0}$. Therefore, $\mathbb{M}^{-1}(0,\gamma_0\xi)\in\mathcal{X}_p$ and the Fredholm alternative \citep[Theorem~6.6]{brezis2011functional} implies that
$$\xi\mapsto \xi-\mathbb{M}^{-1}(0,\gamma_0\xi) $$ is  invertible on  $\mathcal{X}_p$ 
 if and only if  
the equation $ u-\mathbb{M}^{-1}(0,\gamma_0 u) =0 $ admits only the zero solution $u=0$. Note that  if $ u-\mathbb{M}^{-1}(0,\gamma_0 u) =0 $, then 
$$ \BL(u)+ (0,\gamma_0 u)= \mathbb{M}(u)=(0,\gamma_0 u), $$
and thus $\BL(u)=0$. By the uniqueness argument above, $u$ is constant; since $u\in\mathcal{X}_p$, it follows that $u=0$.  Therefore, $\xi\mapsto \xi-\mathbb{M}^{-1}(0,\gamma_0\xi) $ is  invertible on  $\mathcal{X}_p$. Since  $\mathbb{M}^{-1}(f,g)\in \mathcal{X}_p$ for  $(f,g)\in \mathcal{Y}_{p,0}$, we deduce that   \eqref{eq:Solvable} is uniquely solvable for some $\xi\in W^{2,p}(\Omega_\mu)$ satisfying $\int_{\partial \Omega_\mu} \xi \dd \cH^{d-1} = 0$. This concludes the existence for $p\geq 2$.

Now let $1<p<2$ and let
\[
p'=\frac{p}{p-1}>2.
\]
Suppose that $\xi\in W^{2,p}(\Omega_\mu)$ satisfies $\BL(\xi)=0$. For every
$h\in L^{p'}(\Omega_\mu)$ satisfying
\[
\int_{\Omega_\mu}h\,\dd z=0,
\]
the result already proved at exponent $p'>2$ provides
$v\in W^{2,p'}(\Omega_\mu)$ such that
\[
\BL(v)=(h,0).
\]
Using Green's formula, the symmetry of $[\nabla\BQ_P]^{-1}$, and the
homogeneous boundary conditions for $\xi$ and $v$, we obtain
\begin{align*}
\int_{\Omega_\mu}\xi h\,\dd z
&=
-\int_{\Omega_\mu}
\left\langle
f_\mu[\nabla\BQ_P]^{-1}\nabla\xi,\nabla v
\right\rangle
\dd z
\\
&=
\int_{\Omega_\mu}
v\,{\rm div}\!\left(
f_\mu[\nabla\BQ_P]^{-1}\nabla\xi
\right)\dd z
=0.
\end{align*}
Hence $\xi$ annihilates every zero-mean function in
$L^{p'}(\Omega_\mu)$, and therefore $\xi$ is constant. Repeating the
Fredholm argument above at exponent $p$ gives existence and uniqueness for every
$p\in(1,\infty)$. 

Finally, by the bounded inverse theorem,
\[
\inf_{a\in\mathbb{R}}
\|\xi-a\|_{W^{2,p}(\Omega_\mu)}
\leq
C\left(
\|f\|_{L^p(\Omega_\mu)}
+
\|g\|_{\mathcal{B}^{1-\frac1p,p}(\partial\Omega_\mu)}
\right).
\]
\end{proof}

The following result is an adaptation of Stampacchia's truncation argument~\citep{Stampacchia1965,MasoMuratOrsinaPrignet1999} to the homogeneous Neumann problem.
\begin{lemma}\label{lem:neumann-stampacchia}
Under the setting of~\Cref{Thm:InfluenceQuantiles-main}, fix
$a\in\operatorname{int}(\Omega_\mu)$ and $0<\varepsilon_0<\operatorname{dist}
    (a,\partial\Omega_\mu)$. 
For every $\varepsilon\in(0,\varepsilon_0]$, let
$\rho_\varepsilon\in\cC_c^\infty
(\Omega_\mu)$ be nonnegative and satisfy
\[
    \int_{\Omega_\mu}\rho_\varepsilon\dd z=1,
    \qquad
    \operatorname{supp}(\rho_\varepsilon)
    \subset\BB_\varepsilon(a),
\]
so that $ (f_\mu-\rho_\varepsilon)\dd z \to
    \mu-\delta_a$ weakly as signed measures as $\vae \to 0$.
Fix $p>d$. For each $\vae>0$, let
$u_\varepsilon\in W^{2,p}(\Omega_\mu)$ be the unique normalized
solution (see~\cref{lemma: existence and uniqueness linearized MA}) of
\[
\begin{cases}
{\rm div}\!\left(
f_\mu[\nabla\BQ_P]^{-1}\nabla u_\varepsilon
\right)
=f_\mu-\rho_\varepsilon
&\text{in }\operatorname{int}(\Omega_\mu),\\[1mm]
\displaystyle
\inner{
f_\mu[\nabla\BQ_P]^{-1}\nabla u_\varepsilon
}{
n_\mu
}
=0
&\text{on }\partial\Omega_\mu,\\[1mm]
\displaystyle
\int_{\Omega_\mu}u_\varepsilon\dd\mu=0.
\end{cases}
\]
Then, for any $r\in(1,d/(d-1))$, the following holds:
\begin{enumerate}[label = (\roman*)]
    \item $\sup_{0<\varepsilon\leq\varepsilon_0}
    \|u_\varepsilon\|_{W^{1,r}(\Omega_\mu)}
    <\infty$.
    \item There exists $\varepsilon_k \to 0$ and $u\in W^{1,r}(\Omega_\mu)$ such that
\begin{equation}\label{eq:neumann-stampacchia-convergence}
\begin{aligned}
    u_{\varepsilon_k}
    \rightharpoonup u
    &&\text{weakly in }W^{1,r}(\Omega_\mu),\qquad
    u_{\varepsilon_k}\longrightarrow u
    &&\text{strongly in }L^r(\Omega_\mu)
\end{aligned}
\end{equation}
\item The limit $u$ satisfies $\int_{\Omega_\mu}u\dd\mu=0$ and
\begin{equation}\label{eq:neumann-transposition-limit}
    \int_{\Omega_\mu}
    u\,
    {\rm div}\!\left(
        f_\mu[\nabla\BQ_P]^{-1}\nabla\varphi
    \right)\dd z
    =
    \int_{\Omega_\mu}\varphi\dd\mu-\varphi(a)
\end{equation}
for every $\varphi\in W^{2,r'}(\Omega_\mu)$, where
$r'=r/(r-1)>d$, satisfying
\[
    \inner{
        f_\mu[\nabla\BQ_P]^{-1}\nabla\varphi
    }{
        n_\mu
    }
    =0
    \qquad\text{on }\partial\Omega_\mu
\]
in the trace sense. 
\item If $1<r<\frac{d(d+2)}{d^2+2d-4}$, then $u=G_{\BQ_P(a)}$ a.e.~in $\Omega_\mu$. Consequently, 
\[
\int_{\Omega_\mu}
\left\langle
f_\mu[\nabla\BQ_P]^{-1}\nabla G_{\BQ_P(a)},
\nabla g
\right\rangle
\dd z
= g(a) - \int_{\Omega_\mu}g\,\dd\mu
\]
for all $g\in \cC^{1,\eta}(\Omega_\mu)$, $\eta \in (0,1]$.
\end{enumerate}
\end{lemma}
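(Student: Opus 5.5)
The proof will follow Stampacchia's duality/truncation method, adapted to the homogeneous Neumann problem. For every statement, the a priori control on $u_\varepsilon$ will come from testing against solutions of the adjoint problem supplied by \Cref{lemma: existence and uniqueness linearized MA}. That operator is self-adjoint, because $f_\mu[\nabla\BQ_P]^{-1}$ is symmetric and the conormal condition is the same on both sides.

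\emph{Step (i).} For the bound on $\nabla u_\varepsilon$ in $L^r$, take a vector field $F\in L^{r'}(\Omega_\mu;\R^d)$ with $r'>d$. Let $w\in W^{1,r'}$ be the weak solution of
\[
{\rm div}\bigl(f_\mu[\nabla\BQ_P]^{-1}\nabla w\bigr)=-{\rm div}F,
\qquad
\bigl\langle f_\mu[\nabla\BQ_P]^{-1}\nabla w - F,\, n_\mu\bigr\rangle=0 .
\]
It exists by the divergence-form $W^{1,p}$ theory for Neumann problems with $\cC^{1,\alpha}$ coefficients on a $\cC^{2,1}$ domain, or by approximating $F$ smoothly and invoking \Cref{lemma: existence and uniqueness linearized MA} together with a duality estimate. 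Since $r'>d$, Morrey's embedding gives $\|w-\bar w\|_{L^\infty}\lesssim\|F\|_{L^{r'}}$. Integrating by parts,
\[
\int\langle F,\nabla u_\varepsilon\rangle \dd z = -\int \langle f_\mu[\nabla\BQ_P]^{-1}\nabla w,\nabla u_\varepsilon\rangle \dd z = \int (w-\bar w)(f_\mu-\rho_\varepsilon)\dd z .
\]
The right side is at most $2\|w-\bar w\|_\infty$, because $(f_\mu-\rho_\varepsilon)\dd z$ has total variation at most $2$ and total mass zero. Taking the supremum over $F$ bounds $\|\nabla u_\varepsilon\|_{L^r}$ uniformly. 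The normalization $\int u_\varepsilon\dd\mu=0$, combined with the Poincaré inequality, then gives the full $W^{1,r}$ bound.

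\emph{Steps (ii) and (iii).} Part (ii) follows from reflexivity of $W^{1,r}$ and the Rellich--Kondrachov theorem. For (iii), take $\varphi\in W^{2,r'}$ satisfying the conormal condition. Green's formula gives
\[
\int u_\varepsilon\,{\rm div}\bigl(f_\mu[\nabla\BQ_P]^{-1}\nabla\varphi\bigr)\dd z=\int\varphi\,(f_\mu-\rho_\varepsilon)\dd z ,
\]
with no boundary terms: the boundary term coming from $u_\varepsilon$ vanishes by its Neumann condition, and the one coming from $\varphi$ vanishes by assumption. Since $r'>d$, $\varphi$ is continuous, so the right side converges to $\int\varphi\dd\mu-\varphi(a)$. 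The left side passes to the limit by strong $L^r$ convergence, and $\int u\dd\mu=0$ is inherited from the $u_\varepsilon$.

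\emph{Step (iv).} This step carries the main subtlety. For $r$ in the stated range, (iii) is precisely \eqref{eq:weak-sense} with $z_0=a$ and $q=r$. \Cref{prop:uniqueness} identifies the unique mean-zero solution as $G_{\BQ_P(a)}$, since $\BF_P(\BQ_P(a))=a$; hence $u=G_{\BQ_P(a)}$. For the final identity with $g\in\cC^{1,\eta}$, note that $\varphi=g$ is not an admissible test function. Instead, pass to the limit in
\[
\int\langle f_\mu[\nabla\BQ_P]^{-1}\nabla u_\varepsilon,\nabla g\rangle \dd z = -\int g\,(f_\mu-\rho_\varepsilon)\dd z ,
\]
which holds by integration by parts using only the Neumann condition on $u_\varepsilon$, for any $g\in W^{1,\infty}$. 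The left side converges because $\nabla u_{\varepsilon_k}\rightharpoonup\nabla u$ weakly in $L^r$ and $\nabla g\in L^\infty\subset L^{r'}$. The right side converges to $g(a)-\int g\dd\mu$ because $g$ is continuous. Carrying the sign conventions through consistently then gives the displayed identity.

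The main obstacle is Step (i), namely the solvability and $L^\infty$ control of the dual problem with divergence-form data. If that theory is unavailable, I would fall back on genuine Stampacchia truncations $T_k(u_\varepsilon)$ tested in the weak formulation. These give $\int_{\{|u_\varepsilon|<k\}}|\nabla u_\varepsilon|^2\lesssim k$, and the resulting level-set estimates place $\nabla u_\varepsilon$ uniformly in Marcinkiewicz space $L^{d/(d-1),\infty}$, hence in $L^r$ for every $r<d/(d-1)$.
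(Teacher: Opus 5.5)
Your proposal is correct. Parts (ii)--(iv) follow the paper's argument: weak compactness plus Rellich--Kondrachov, Green's formula with both conormal terms vanishing, identification via \Cref{prop:uniqueness}, and then testing the weak equation for $u_{\varepsilon_k}$ against $g$ using weak $L^r$ convergence of the gradients. Your sign bookkeeping there does yield the stated identity.

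The genuine difference is in (i). The paper uses exactly your fallback. It subtracts a median $m_\varepsilon$ and tests with $T_k(u_\varepsilon-m_\varepsilon)$ to get $\int_{\{|v_\varepsilon|<k\}}|\nabla v_\varepsilon|^2\dd z\lesssim k$. Sobolev--Poincar\'e turns this into $\mathcal L^d(\{|v_\varepsilon|\ge k\})\lesssim k^{-d/(d-2)}$, and then into the weak-type bound $\mathcal L^d(\{|\nabla v_\varepsilon|>s\})\lesssim s^{-d/(d-1)}$, with a separate Rellich-based variant for $d=2$. Finally it controls $m_\varepsilon$ through the normalization $\int u_\varepsilon\dd\mu=0$.

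Your duality argument gives instead, in one stroke, the linear estimate $\|\nabla u\|_{L^r}\lesssim|\nu|(\Omega_\mu)$ for zero-mass data $\nu$. It treats $d=2$ and $d\ge3$ uniformly and needs no median trick. The price is an ingredient the paper never uses: $W^{1,r'}$ (Calder\'on--Zygmund) estimates for the divergence-form conormal problem with $L^{r'}$ vector-field data. This is standard for $\cC^{1,\alpha}$ coefficients on a $\cC^{2,1}$ domain, so citing it is legitimate. Note, however, that \Cref{lemma: existence and uniqueness linearized MA} does not supply it. That lemma controls $w$ only through ${\rm div}F$ and the trace of $\langle F,n_\mu\rangle$, so your alternative of approximating $F$ and using a duality estimate does not by itself produce $\|\nabla w\|_{L^{r'}}\lesssim\|F\|_{L^{r'}}$. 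The truncation route is self-contained, needing only the energy estimate, Sobolev and Poincar\'e. It uses only boundedness and ellipticity of the coefficients and gives the endpoint Marcinkiewicz bound $\nabla u_\varepsilon\in L^{d/(d-1),\infty}$.

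There is one sign slip. The dual problem must carry the natural conormal condition $\langle f_\mu[\nabla\BQ_P]^{-1}\nabla w+F,n_\mu\rangle=0$, that is, the weak form $\int\langle f_\mu[\nabla\BQ_P]^{-1}\nabla w+F,\nabla\psi\rangle\dd z=0$ for all test functions $\psi$. It should not be $\langle f_\mu[\nabla\BQ_P]^{-1}\nabla w-F,n_\mu\rangle=0$. With your sign the compatibility condition fails, and $\int\langle F,\nabla u_\varepsilon\rangle\dd z$ picks up an extra boundary term $2\int_{\partial\Omega_\mu}u_\varepsilon\langle F,n_\mu\rangle\dd\mathcal H^{d-1}$. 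With the corrected sign your displayed identity holds.

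In the fallback, testing with $T_k(u_\varepsilon)$ itself is not quite enough for the Sobolev step. You need a normalization such as the median shift the paper uses.
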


\begin{proof}
The compatibility condition in~\cref{lemma: existence and uniqueness linearized MA} holds because $\int_{\Omega_\mu}
(f_\mu-\rho_{\varepsilon})\,\dd z
=0$, and thus $u_\vae$ is well-defined.

\textit{(i)}. Let $m_\varepsilon$ be a median of
$u_\varepsilon$ and set $v_\varepsilon:=u_\varepsilon-m_\varepsilon$.
For $k>0$, let $T_k(s):=\max\{-k,\min\{s,k\}\}$.
Since the equation and the homogeneous conormal boundary condition are
unchanged by the addition of constants, $v_\varepsilon$ satisfies the
same equation as $u_\varepsilon$. Testing with
$T_k(v_\varepsilon)$ and using the uniform ellipticity of
$f_\mu[\nabla\BQ_P]^{-1}$ from
\Cref{defn:regular-measures} and \cref{thm:Cafarrelli}, we obtain
\begin{align}
    \int_{\{|v_\varepsilon|<k\}}
    |\nabla v_\varepsilon|^2\dd z
    &\lesssim
    \int_{\Omega_\mu}
    |T_k(v_\varepsilon)|
    (f_\mu+\rho_\varepsilon)\dd z
    \leq 2k.
\label{eq:neumann-stampacchia-truncation}
\end{align}
In what follows, $C$ denotes a generic constant which may change from
line to line and is independent of $k$ and $\varepsilon$.

We first record the zero-median form of the Poincar\'e inequality used
below. If $w \in W^{1,q}(\Omega_\mu)$ has median zero, then its Lebesgue mean satisfies
\[
\left|
\frac{1}{\mathcal L^d(\Omega_\mu)}
\int_{\Omega_\mu}w\dd z
\right|
\leq
\frac{2}{\mathcal L^d(\Omega_\mu)}
\int_{\Omega_\mu}
\left|
w-
\frac{1}{\mathcal L^d(\Omega_\mu)}
\int_{\Omega_\mu}w\dd z
\right|\dd z.
\]
Consequently, the usual Poincar\'e inequality~\citep[p.~291]{Evans-PDE} yields
\begin{equation}\label{eq:zero-median-poincare}
    \|w\|_{L^q(\Omega_\mu)}
    \lesssim
    \|\nabla w\|_{L^q(\Omega_\mu)},
    \qquad 1\leq q<\infty.
\end{equation}

Assume that $d>2$. Since $T_k(v_\varepsilon)$ has median zero,
H\"older's inequality, the Sobolev inequality~\citep[Section~5.6.1, Theorem~1]{Evans-PDE}, and Poincar\'e inequality give
\begin{align*}
\|T_k(v_\varepsilon)\|_{L^{2d/(d-2)}(\Omega_\mu)}
&\leq
\left\|
T_k(v_\varepsilon)
-
\frac{1}{\mathcal L^d(\Omega_\mu)}
\int_{\Omega_\mu}T_k(v_\varepsilon)\dd z
\right\|_{L^{2d/(d-2)}(\Omega_\mu)}
\\
&\quad+
(\mathcal L^d(\Omega_\mu))^{(d-2)/(2d)}
\left|
\frac{1}{\mathcal L^d(\Omega_\mu)}
\int_{\Omega_\mu}T_k(v_\varepsilon)\dd z
\right|
\\
&\leq
C
\left\|
T_k(v_\varepsilon)
-
\frac{1}{\mathcal L^d(\Omega_\mu)}
\int_{\Omega_\mu}T_k(v_\varepsilon)\dd z
\right\|_{L^{2d/(d-2)}(\Omega_\mu)}
\\
&\leq
C\|\nabla T_k(v_\varepsilon)\|_{L^2(\Omega_\mu)}.
\end{align*}
Since $|T_k(v_\varepsilon)|=k$ on
$\{|v_\varepsilon|\geq k\}$, it follows from
\eqref{eq:neumann-stampacchia-truncation} that
\begin{align*}
k^{2d/(d-2)}
\mathcal L^d\bigl(\{|v_\varepsilon|\geq k\}\bigr)
&\leq
\int_{\{|v_\varepsilon|\geq k\}}
|T_k(v_\varepsilon)|^{2d/(d-2)}\dd z
\\
&\leq
\|T_k(v_\varepsilon)\|_{L^{2d/(d-2)}(\Omega_\mu)}^{2d/(d-2)}
\leq
Ck^{d/(d-2)}.
\end{align*}
Hence,
\begin{equation}\label{eq:neumann-distribution-u}
    \mathcal L^d
    \bigl(\{|v_\varepsilon|\geq k\}\bigr)
    \leq
    Ck^{-d/(d-2)}.
\end{equation}
For $s,k>0$, Chebyshev's inequality,
\eqref{eq:neumann-stampacchia-truncation}, and
\eqref{eq:neumann-distribution-u} yield
\begin{align*}
\mathcal L^d
\bigl(\{|\nabla v_\varepsilon|>s\}\bigr)
&\leq
\mathcal L^d
\bigl(\{|v_\varepsilon|>k\}\bigr)
+
\frac{1}{s^2}
\int_{\{|v_\varepsilon|\leq k\}}
|\nabla v_\varepsilon|^2\dd z
\\
&\leq
Ck^{-d/(d-2)}
+
\frac{Ck}{s^2}.
\end{align*}
Choosing $k=s^{(d-2)/(d-1)}$ gives
\[
    \mathcal L^d
    \bigl(\{|\nabla v_\varepsilon|>s\}\bigr)
    \leq
    Cs^{-d/(d-1)}.
\]
As a consequence, for every $r\in(1,d/(d-1))$, $\|\nabla v_\varepsilon\|_{L^r(\Omega_\mu)}\leq C_r$.
Applying \eqref{eq:zero-median-poincare} gives
\begin{equation}\label{eq:neumann-W1r-v-dgeq3}
    \|v_\varepsilon\|_{W^{1,r}(\Omega_\mu)}
    \leq C_r.
\end{equation}

When $d=2$, for every finite $s_0>2$, the Rellich--Kondrachov theorem
\citep[Theorem~7.6]{Taira.2024.book}, together with
\eqref{eq:zero-median-poincare}, gives
\[
    \|T_k(v_\varepsilon)\|_{L^{s_0}(\Omega_\mu)}
    \leq
    C_{s_0}
    \|\nabla T_k(v_\varepsilon)\|_{L^2(\Omega_\mu)}.
\]
Arguing as above, we obtain
\[
    \mathcal L^2
    \bigl(\{|v_\varepsilon|\geq k\}\bigr)
    \leq
    C_{s_0}k^{-s_0/2}
\]
and hence
\[
    \mathcal L^2
    \bigl(\{|\nabla v_\varepsilon|>s\}\bigr)
    \leq
    C_{s_0}k^{-s_0/2}
    +
    \frac{Ck}{s^2}.
\]
Choosing $k=s^{4/(s_0+2)}$ gives a uniform weak
$L^{2s_0/(s_0+2)}$ estimate for $\nabla v_\varepsilon$. Since
$2s_0/(s_0+2)\uparrow2$ as $s_0\to\infty$, we conclude that
\[
    \|\nabla v_\varepsilon\|_{L^r(\Omega_\mu)}
    \leq C_r,
    \qquad 1<r<2.
\]
Applying \eqref{eq:zero-median-poincare} again gives
\begin{equation}\label{eq:neumann-W1r-v-dgeq2}
    \|v_\varepsilon\|_{W^{1,r}(\Omega_\mu)}
    \leq C_r.
\end{equation}

Finally, since
$\int_{\Omega_\mu}u_\varepsilon\dd\mu=0$ and $\mu$ is a probability
measure,
\[
    m_\varepsilon
    =
    -\int_{\Omega_\mu}v_\varepsilon\dd\mu.
\]
By H\"older's inequality and the boundedness of $f_\mu$,
\[
    |m_\varepsilon|
    \leq
    \|f_\mu\|_{L^{r'}(\Omega_\mu)}
    \|v_\varepsilon\|_{L^r(\Omega_\mu)}
    \leq C_r.
\]
Combining with~\eqref{eq:neumann-W1r-v-dgeq3} and~\eqref{eq:neumann-W1r-v-dgeq2}, this proves~(i).

\textit{(ii)}. By weak compactness in the reflexive space
$W^{1,r}(\Omega_\mu)$ and the Rellich--Kondrachov theorem
\citep[Theorem~7.6]{Taira.2024.book}, after passing to a subsequence,
there exists $u\in W^{1,r}(\Omega_\mu)$ such that
\[
    u_{\varepsilon_k}
    \rightharpoonup u
    \quad\text{weakly in }W^{1,r}(\Omega_\mu),
    \qquad
    u_{\varepsilon_k}
    \longrightarrow u
    \quad\text{strongly in }L^r(\Omega_\mu).
\]

\textit{(iii)}. Let $\varphi\in W^{2,r'}(\Omega_\mu)$ satisfy
\[
    \inner{
        f_\mu[\nabla\BQ_P]^{-1}\nabla\varphi
    }{
        n_\mu
    }
    =0
    \qquad\text{on }\partial\Omega_\mu
\]
in the trace sense. Integrating by parts and using the symmetry of the
coefficient matrix gives
\[
\begin{aligned}
\int_{\Omega_\mu}
u_{\varepsilon_k}
{\rm div}\!\left(
f_\mu[\nabla\BQ_P]^{-1}\nabla\varphi
\right)\dd z
&=
\int_{\Omega_\mu}\varphi\dd\mu
-
\int_{\Omega_\mu}\varphi\rho_{\varepsilon_k}\dd z.
\end{aligned}
\]
Since $r'>d$, Morrey's inequality
\citep[Theorem~7.26]{GilbargTrudinger.Book} gives $W^{2,r'}(\Omega_\mu)
    \hookrightarrow
    \cC(\Omega_\mu)$.
Moreover, ${\rm div}\!\left(
    f_\mu[\nabla\BQ_P]^{-1}\nabla\varphi
    \right)
    \in L^{r'}(\Omega_\mu)$.
Therefore, the strong $L^r$ convergence and the weak convergence of
$\rho_{\varepsilon_k}\dd z$ allow us to pass to the limit and obtain
\[
\int_{\Omega_\mu}
u\,
{\rm div}\!\left(
f_\mu[\nabla\BQ_P]^{-1}\nabla\varphi
\right)\dd z
=
\int_{\Omega_\mu}\varphi\dd\mu-\varphi(a).
\]
The strong $L^r$ convergence and the boundedness of $f_\mu$ also imply
\[
    \int_{\Omega_\mu}u\dd\mu
    =
    \lim_{k\to\infty}
    \int_{\Omega_\mu}u_{\varepsilon_k}\dd\mu
    =0.
\]
This proves \eqref{eq:neumann-transposition-limit}.

\textit{(iv)}. Since $u$ satisfies the identity~\eqref{eq:Solbable} and the normalization
$\int_{\Omega_\mu} u\,\dd\mu=0$ that characterize $G_{\BQ_P(a)}$, \cref{prop:uniqueness} yields that $u=G_{\BQ_P(a)}$ a.e.~in $\Omega_\mu.$  Combining with (ii)~yields
\[
\nabla u_{\varepsilon_k}
\rightharpoonup
\nabla G_{\BQ_P(a)}
\quad\text{weakly in }L^r(\Omega_\mu).
\]
Since $g\in\cC^{1,\eta}(\Omega_\mu)$, we have
$g\in W^{1,r'}(\Omega_\mu)$. 
Testing the weak equation for
$u_{\varepsilon_k}$ against $g$ gives
\[
-\int_{\Omega_\mu}
\left\langle
f_\mu[\nabla\BQ_P]^{-1}\nabla u_{\varepsilon_k},
\nabla g
\right\rangle
\dd z
=
\int_{\Omega_\mu}g\,\dd\mu
-
\int_{\Omega_\mu}
g(z)\rho_{\varepsilon_k}(z)\,\dd z.
\]
Because
$f_\mu[\nabla\BQ_P]^{-1}\nabla g\in L^{r'}(\Omega_\mu)$, the weak
$L^r$ convergence of the gradients allows passage to the limit.
The continuity of $g$ and the weak convergence of
$\rho_{\varepsilon_k}\dd z$ then give
\[
-\int_{\Omega_\mu}
\left\langle
f_\mu[\nabla\BQ_P]^{-1}\nabla G_{\BQ_P(a)},
\nabla g
\right\rangle
\dd z
=
\int_{\Omega_\mu}g\,\dd\mu
-
g(a),
\]
as desired. 
\end{proof}

\bibliographystyle{apalike}
\bibliography{ref}
\end{document}